\documentclass{birkjour}
\usepackage{amsmath}
\usepackage{amssymb}
\usepackage{amsthm}
\usepackage{enumerate}
\usepackage[mathscr]{eucal}
\usepackage{xcolor}
\theoremstyle{plain}
\usepackage{tikz}
\usepackage{soul}
\usepackage[normalem]{ulem}
\newtheorem{theorem}{Theorem}[section]
\newtheorem{lemma}[theorem]{Lemma}
\newtheorem{prop}[theorem]{Proposition}
\theoremstyle{definition}
\newtheorem{definition}[theorem]{Definition}
\newtheorem{remark}[theorem]{Remark}

\newtheorem{example}[theorem]{Example}

\newtheorem{cor}[theorem]{Corollary}
\theoremstyle{remark}
\mathchardef\mhyphen="2D 




\begin{document}
	
	\title[On anti-coproximinal and strongly anti-coproximinal subspaces ]
	{On anti-coproximinal and strongly \\anti-coproximinal  subspaces of function spaces}

\author[Sohel]{Shamim Sohel}
\address{Department of mathematics\\ Jadavpur University\\Kolkata 700032\\West Bengal\\India}
\email{shamimsohel11@gmail.com}

\author[Ghosh]{Souvik Ghosh}
\address{Department of mathematics\\ Jadavpur University\\Kolkata 700032\\West Bengal\\India}
\email{sghosh0019@gmail.com}

\author[Sain]{Debmalya Sain}
\address{Department of mathematics\\ Indian Institute of Information Technology\\Raichur\\Karnataka 584135\\ India}
\email{saindebmalya@gmail.com}

\author[Paul]{Kallol Paul}

\address{Vice-Chancellor\\
	Kalyani University\\
	West Bengal 741235 \\and 
	Professor (on lien)\\ Department of mathematics\\ Jadavpur University\\Kolkata 700032\\West Bengal\\India}
\email{kalloldada@gmail.com}

	\subjclass{Primary 41A65; Secondary 46B20, 46E15}
	
	\keywords{Approximate Birkhoff-James orthogonality; Best coapproximation;  Bounded linear operators; Function spaces; Polyhedral Banach space.}
	

	\maketitle
	
\begin{abstract}
	 The purpose of this article is to study the anti-coproximinal and strongly anti-coproximinal subspaces of the Banach space of all bounded (continuous) functions. We obtain a tractable necessary condition for a subspace to be stronsgly anti-coproximinal. We prove that for a subspace $\mathbb{Y}$ of a Banach space $\mathbb{X}$ to be strongly anti-coproximinal, $\mathbb Y$ must contain all w-ALUR points of $\mathbb{X}$ and intersect every maximal face of $B_{\mathbb{X}}.$ We also observe that the subspace $\mathbb{K}(\mathbb{X}, \mathbb{Y})$ of all compact operators between the Banach spaces $ \mathbb X $ and $ \mathbb Y$ is strongly anti-coproximinal in the space $\mathbb{L}(\mathbb{X}, \mathbb{Y})$ of all bounded linear operators between $ \mathbb X $ and $ \mathbb Y$, whenever $\mathbb{K}(\mathbb{X}, \mathbb{Y})$ is a proper subset of $\mathbb{L}(\mathbb{X}, \mathbb{Y}),$ and the unit ball $B_{\mathbb{X}}$ is the closed convex hull of its strongly exposed points. 
\end{abstract}

	\section{Introduction}
		
	The concept of best coapproximation, introduced by Franchetti and Furi  \cite{FF}, plays an important role in understanding the approximation properties of the subspaces of a Banach space. Consequently, the best coapproximation problem in Banach spaces has been explored from several perspectives \cite{KL, LT, N, RS, Rao1, Rao2}. We should perhaps begin with the fact even in the finite-dimensional case, the existence of best coapproximation is not guaranteed, to illustrate the non-triviality of the problem. Indeed, as observed by  Papini et al. in \cite{PS}, the best coapproximation problem is equivalent to the existence of a norm-one projection onto the concerned subspace. For some of the nice continuations of this connection, we refer the readers to \cite{Bruck, Bruck1, LT, M2, Rao, S}. Let us also mention here that the notion of Birkhoff-James orthogonality plays a crucial role in understanding  the best coapproximation problem (see \cite{FF, SSGP, SSGP2, SSGP3}). Motivated by this fact, two special types of subspaces of a Banach space, christened anti-coproximinal and strongly anti-coproximinal subspaces, have been introduced recently in \cite{SSGP3}.  In this article, we explore the (strongly) anti-coproximinal subspaces of the Banach space  of all bounded (continuous) functions, including the space of all bounded linear operators between Banach spaces. Our study relates the geometric properties of the underlying Banach spaces with the approximation properties of subspaces of function spaces, to illustrate the specialty of the (strongly) anti-coproximinal subspaces. 

	\medskip
	Let $\mathbb{X}, \mathbb{Y}$ denote Banach spaces over the field $\mathbb{K},$ either real or complex. For $\epsilon >0,$  let us set $\mathcal{D}(\epsilon)= \{z \in \mathbb{K}: |z| \leq \epsilon \}$  and let $\mathbb{T}$ denote the unit circle in the complex plane.  We use the notations $B_\mathbb{X}$ and $S_{\mathbb{X}}$ to denote the unit ball and unit sphere of $\mathbb{X},$ respectively.  Let $\mathbb{L}(\mathbb{X}, \mathbb{Y})$ $(\mathbb{K}(\mathbb{X}, \mathbb{Y}))$ be the space of all bounded (compact) linear operators between $\mathbb{X}$ and $\mathbb{Y}.$ The dual space of $\mathbb{X}$ is denoted by $\mathbb{X}^*.$ For a non-zero $x \in \mathbb{X},$  $x^* \in S_{\mathbb{X}^*}$ is said to be a supporting functional at $x$ if $x^*(x)=\|x\|.$ The set of all supporting functionals at $x$ is denoted by $J(x),$ i.e., $J(x)=\{x^* \in S_{\mathbb{X}^*}: x^*(x)=\|x\|\}.$ A non-zero  element $x \in \mathbb{X} $ is said to be smooth if $J(x)$ is a singleton. The collection of all smooth points in $\mathbb{{X}}$ is denoted by $ Sm(\mathbb{X}).$ A Banach space $\mathbb{X}$ is said to be smooth if $Sm(\mathbb{X})=\mathbb{X} \setminus \{0\}.$ For a subspace $ \mathbb{Y} $ of $ \mathbb{X}, $	we use the notation $\mathcal{J}_{\mathbb{Y}} = \{ y^* \in S_{\mathbb{X}^*} : y^*(y) = 1, \text{for some}~  y \in Sm(\mathbb{{X}})\cap S_{\mathbb{Y}} \}.$ It is easy to check that $\mathcal{J}_{\mathbb{Y}} \subseteq Ext(B_{\mathbb{X}^*}).$ Whenever $ Sm(\mathbb{X}) \cap S_\mathbb{{Y}}= \emptyset, $ we define $\mathcal{J}_\mathbb{{Y}}= \emptyset.$ 	The convex hull of a set $S$ is denoted as $co(S).$ For a convex set $C,$ an element $x \in C$ is said to be an extreme point if $x= (1-t) y+ tz,$ for some $t \in (0,1)$ and some $y,z \in C$ implies that $x=y=z.$ The set of all extreme points of $C$ is denoted by $Ext(C).$ A finite-dimensional real Banach space is said to be polyhedral if $B_{\mathbb{X}}$ is a polyhedron, or, equivalently, if $Ext(B_{\mathbb{X}})$ is finite. 
	A convex set $F \subset S_{\mathbb{X}}$ is said to be a face of $B_{\mathbb{X}} $ if for any $ y, z \in B_{\mathbb{X}},$ $\frac{1}{2}(y+z) \in F$ implies that $y, z \in F.$  $F$ is called a maximal face if for any face $F'$ of $B_\mathbb{X},$ $F \subset F'$ implies $F=F'.$  For any face $ F $ of $B_{\mathbb{X}}$ and any $x^* \in S_{\mathbb{X}^*},$ we say that $x^*$ supports $F$ if $x^*(x)=1~ \forall x \in F.$ We use the notation $int(F)$ to denote the relative interior of a face $F$ endowed with the subspace topology of $F.$
		The space  $ \mathbb X$ is said to be strictly convex if $Ext(B_{\mathbb{X}})= S_{\mathbb{X}}.$ An element $x \in S_\mathbb{X}$ is said to be rotund if for some  $y \in B_\mathbb{X},$ $\|\frac{x+y}{2}\|=2$ implies $x=y.$ In a strictly convex space every element of the unit sphere is  rotund. 
An element $x \in S_{\mathbb{X}}$ is said to be weakly almost locally uniformly rotund or w-ALUR point if given any $\{x_m^*\}_{m \in \mathbb{N}} \subset B_{\mathbb{X}^*}$ and any $\{x_n\}_{n \in \mathbb{N}} \subset B_\mathbb{X} $, $$\lim_{m\to \infty}\lim_{n\to\infty} x_m^* \bigg(\frac{x_n+x}{2}\bigg)=1$$ implies that $x_n \overset{w}{\longrightarrow} x$.
		 The space $\mathbb{X}$ is said to be  w-ALUR if each element of $S_\mathbb{X}$ is  w-ALUR. 	 An element $x \in S_{\mathbb{X}}$ is said to be an exposed point of $B_{\mathbb{X}}$ if there exists $x^* \in J(x)$ such that $x^*(y) < 1= x^*(x),$ for any $y \in S_{\mathbb{X}} \setminus \{x\}.$ Clearly, every exposed point of $B_{\mathbb{X}}$ is also an extreme point of $B_{\mathbb{X}}$.  We say $x \in S_{\mathbb{X}}$ to be a strongly exposed point of $B_{\mathbb{X}}$ if there exists $x^* \in J(x)$ such that for any sequence $\{x_n\} \subset  B_{\mathbb{X}},$ $x^*(x_n) \longrightarrow
	  1=x^*(x)$ implies that $x_n \longrightarrow x.$ The set of all  exposed points and strongly exposed points  of $B_{\mathbb{X}}$ are denoted by $Exp(B_{\mathbb{X}})$ and $s$-$Exp(B_{\mathbb{X}})$, respectively.  
	  
	\smallskip
		For any element $ x \in \mathbb{X}, $ and any subspace $\mathbb Y$ of $ \mathbb{X}, $  $ y_0 \in \mathbb{Y} $ is said to be a best coapproximation (see \cite{FF}) to $ x $ out of $ \mathbb Y$ if $ \| y_0 - y \| \leq \| x - y \| $ for all $ y \in \mathbb{Y}. $  As mentioned previously, we require the concept of Birkhoff-James orthogonality to gain a better understanding of the best coapproximation problem. Given $ x, y  \in \mathbb{X}$, we say that $ x $ is Birkhoff-James orthogonal \cite{B, J}  to $ y, $ written as $ x \perp_B y, $ if $ \| x+\lambda y \| \geq \| x \|, $ for all $ \lambda \in \mathbb{K}. $ It is clear that $ y_0 \in \mathbb{Y} $ is a best coapproximation to $ x $ out of $ \mathbb{Y} $ if and only if 
	$$ \mathbb{{Y}} \perp_B (x- y_0), \,\, i.e., \,\, y \perp_B (x-y_0) ~  \forall  y \in \mathbb{Y}. $$ 

   Given $\epsilon \in [0, 1)$ and $ x, y \in \mathbb{X},$  $x$ is said to be $\epsilon$-Birkhoff-James orthogonal \cite{C} to $y,$ written as $ x \perp_B^\epsilon y,$ if 
\[\|x + \lambda y\| \geq \|x\| -  \epsilon \|\lambda y\|~ \text{for ~every ~} ~\lambda \in \mathbb{K}. \]

The above definition, in conjunction with the previously mentioned relation between Birkhoff-James orthogonality and the best coapproximation, naturally leads us to the following definition of  $\epsilon$-best coapproximation, introduced in \cite{SSGP3}:\\

Let $ \epsilon \in [0,1).$  For a subspace $\mathbb Y$ and a  given $ x \in \mathbb{X},  $ we say that $ y_0 \in \mathbb{Y} $ is an  $\epsilon$-best coapproximation to $ x $ out of $ \mathbb{{Y}} $ if 
$$ \mathbb{{Y}} \perp_B^\epsilon (x- y_0), \,\, i.e., \,\, y \perp_B^\epsilon (x-y_0) \,  \forall  y \in \mathbb{Y}. $$

As noted in \cite{SSGP3},  the definitions of best coapproximation and $\epsilon$-best coapproximation motivate us to study the following two special types of subspaces of a Banach space:

\begin{definition}\label{epsilon} 
(i)  A subspace  $\mathbb{{Y}}$ of  $\mathbb{X}$ is said to be an \textit{anti-coproximinal subspace} of $\mathbb{X}$ if for any $x \in \mathbb{X} \setminus \mathbb{Y},$ there does not exist a best coapproximation to $x$ out of $\mathbb{Y}.$ Equivalently, a subspace $\mathbb{Y}$ is anti-coproximinal in $\mathbb{X}$ if  for any nonzero $x \in \mathbb{X},$ $\mathbb{Y} \not\perp_B x.$\\
		(ii) A subspace  $\mathbb{{Y}}$  of $\mathbb{X}$ is said to be a \textit{strongly anti-coproximinal subspace} of $\mathbb{X}$ if for any given $x \in \mathbb{X} \setminus \mathbb{Y}$ and for any $\epsilon \in [0, 1),$ there does not exist an $\epsilon$-best coapproximation to $x$ out of $\mathbb{Y}.$  Equivalently, a subspace  $\mathbb{Y}$ is strongly anti-coproximinal if  for any nonzero $x \in \mathbb{X}$ and for any $\epsilon \in [0, 1),$ $\mathbb{Y} \not\perp_B^{\epsilon} x.$ 

\end{definition}
 
For any nonempty set $K,$ $\ell_\infty(K, \mathbb{X})$ stands for the space of all bounded functions from $K$ to $\mathbb{X}.$	Given a compact Hausdorff topological space $K$ and a Banach space $\mathbb{X}$, we write $C(K, \mathbb{X} )$ to denote the space of  bounded continuous functions from $K$ to $\mathbb{X}$, endowed with the sup norm, i.e., 
$$C(K, \mathbb{X})=\{f \, | \, f: K \to \mathbb{X}\, \, \mbox{is continuous} \, \mbox{and} \, \sup_{k \in K} \|f(k)\| < \infty\}.$$
Given a locally compact Hausdorff space $K$ and a Banach space $\mathbb{X}$, the space $C_0(K, \mathbb{X})$ is the space of  bounded continuous functions $f$ having the property that for any $\epsilon >0,$ there exists a compact set $\Gamma \subset K$ such that $\|f(k)\|< \epsilon,$ for any $k \in K\setminus \Gamma.$ Whenever $K$ is compact, $C_0(K, \mathbb{X})= C(K, \mathbb{X}).$
	For a function $f \in C_0(K, \mathbb{X}),$ the norm attainment set of $f,$ denoted by $M_f,$ is defined as $	M_f= \{k\in K: \|f(k)\|= \|f\|\}.$ Whenever $\mathbb{X}= \mathbb{R}$ or $ \mathbb{C},$ we use the standard  notations  $C(K)$ and $C_0(K)$  in place of $C(K, \mathbb{X})$ and $C_0(K, \mathbb{X}),$ respectively. 
Note that for any  $f \in S_{C_0(K, \mathbb{X})},$ $ M_f$ is non-empty and compact.\\

The present article is divided into three sections, including the introductory one. In the preliminary section, we provide a characterization of approximate Birkhoff-James orthogonality in the function space as well as in the space of bounded linear operators. The main results of this article consist of three different parts. First we study the anti-coproximinal and the strongly anti-coproximinal subspaces of a Banach space, and obtain some useful  necessary and sufficient conditions for the same. In the next part, we consider the spaces $\ell_\infty(K)$ and $C_0(K)$ and find computationally effective characterizations of  these special types of subspaces. We show that the notions of anti-coproximinality and strong anti-coproximinality coincide in both $\ell_\infty(K)$ and $C_0(K)$. Moreover, we study these subspaces in sequence spaces,   $c_0, c \, \, \mbox{and}\,\,  \ell_\infty.$ In the last part, we consider the space of all bounded operators $\mathbb{L}(\mathbb{X}, \mathbb{Y})$ and investigate  when  $\mathbb{K}(\mathbb{X}, \mathbb{Y})$ is strongly anti-coproximinal in $\mathbb{L}(\mathbb{X}, \mathbb{Y}).$ We use the so-called \emph{ B\v S-Property} of operators to investigate anti-coproximinality of  $\mathbb{K}(\mathbb{X}, \mathbb{Y})$ in   $\mathbb{L}(\mathbb{X}, \mathbb{Y}).$  We  also prove that the set of norm attaining operators satisfying the \emph{ B\v S-Property} is dense in $\mathbb{L}(\mathbb{X}, \mathbb{Y}),$  if $\mathbb X $  satisfies the Radon-Nikodym Property. Finally, we provide  a sufficient condition for strong anti-coproximinality of a subspace  in $\mathbb{L}(\mathbb{H}),$ for any Hilbert space $ \mathbb{H}. $

\section{preliminaries}
Throughout this article, we require several results involving Birkhoff-James orthogonality and its approximate version. Therefore, mainly for the convenience of the readers, it is worth mentioning the relevant results in the context of the present study.

\begin{lemma}\label{James}\cite[Th. 2.1]{J}
	Let $\mathbb{X}$ be a Banach space and let $x, y \in \mathbb{X}.$  Then $x \perp_B y$ if and only if there exists $x^* \in J(x)$ such that $x^*(y)=0.$
\end{lemma}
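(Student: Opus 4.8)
The plan is to prove the two implications separately, the forward one resting on the Hahn--Banach separation theorem. Throughout I may assume $x \neq 0$, since supporting functionals are only defined at nonzero points and the case $x = 0$ is vacuous.

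For the (easier) sufficiency, suppose $x^* \in J(x)$ satisfies $x^*(y) = 0$. Then for every $\lambda \in \mathbb{K}$,
\[
\|x + \lambda y\| \geq |x^*(x + \lambda y)| = |x^*(x) + \lambda x^*(y)| = |x^*(x)| = \|x\|,
\]
so $x \perp_B y$ by definition. This needs nothing beyond $\|x^*\| = 1$ and $x^*(x) = \|x\|$.

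For necessity, assume $x \perp_B y$; if $y = 0$ then any supporting functional at $x$ (which exists by Hahn--Banach) works, so assume $y \neq 0$ as well. The key geometric observation is that the orthogonality condition says precisely that the affine set $L = \{x + \lambda y : \lambda \in \mathbb{K}\}$ is disjoint from the open ball $C = \{z \in \mathbb{X} : \|z\| < \|x\|\}$, since every point of $L$ has norm at least $\|x\|$. As $C$ is open, convex and nonempty and $L$ is convex, the geometric Hahn--Banach theorem supplies a nonzero continuous real-linear functional $f$ and a real constant $c$ with $f(z) \le c$ for all $z \in C$ and $f(w) \ge c$ for all $w \in L$. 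I would then extract three facts: first, taking the supremum over the open ball gives $\|f\|\,\|x\| = \sup_{z \in C} f(z) \le c$; second, since $x \in L$ we have $f(x) \ge c$ while trivially $f(x) \le \|f\|\,\|x\|$, and chaining these inequalities forces $f(x) = c = \|f\|\,\|x\|$; third, the lower bound $f(x + \lambda y) \ge c = f(x)$ for all $\lambda$ forces the contribution of $y$ (and, in the complex case, of $iy$) to vanish.

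In the real case this finishes the argument: $f(y) = 0$, and normalizing $x^* = f/\|f\|$ yields $x^* \in J(x)$ with $x^*(y) = 0$. The main technical point, and the step I expect to require the most care, is the passage to the complex scalar field. There $L$ is a two-real-dimensional affine subspace, and boundedness of $f$ below on $L$ forces $f(y) = f(iy) = 0$; setting $x^*(z) = f(z) - i f(iz)$ produces a complex-linear functional with $\|x^*\| = \|f\|$ and $x^*(y) = 0$. It then remains to confirm that $x^*$ still supports $x$: from $f(x) = \|f\|\,\|x\|$ together with $|x^*(x)| \le \|f\|\,\|x\| = \operatorname{Re} x^*(x)$ one concludes that $x^*(x)$ is real and equal to $\|f\|\,\|x\|$, so that $x^*/\|f\|$ is the desired supporting functional. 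Since this is precisely the classical theorem of James, I would alternatively simply invoke \cite[Th.~2.1]{J}.
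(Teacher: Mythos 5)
Your proof is correct. Note, however, that the paper itself offers no proof of this lemma at all: it is stated purely as a citation to James's classical theorem, so there is no internal argument to compare against. What you have reconstructed is the standard separation proof of that classical result, and it is carried out correctly: the sufficiency direction is the usual one-line estimate via the supporting functional, and for necessity your identification of Birkhoff--James orthogonality with the disjointness of the affine line $L=\{x+\lambda y\}$ from the open ball of radius $\|x\|$, followed by geometric Hahn--Banach, the chain $\|f\|\,\|x\|\le c\le f(x)\le\|f\|\,\|x\|$, and the positivity of $f(\lambda y)$ for all $\lambda$, is exactly the classical route. Your extra care with the complex case is a genuine added value rather than pedantry: James's original theorem is formulated for real normed spaces, whereas the paper works over $\mathbb{K}$ real or complex, so the complexification $x^*(z)=f(z)-if(iz)$, the verification that $\|x^*\|=\|f\|$, and the argument that $|x^*(x)|\le \operatorname{Re}x^*(x)$ forces $x^*(x)$ to be real are precisely the steps needed to justify the lemma in the generality in which the paper uses it. The only caveats are minor: the restriction to $x\neq 0$ is necessary (the lemma genuinely fails for $x=0$ in one dimension if one reads $J(0)=S_{\mathbb{X}^*}$), and you handle it appropriately, consistently with the paper's convention that $J(x)$ is defined only for nonzero $x$.
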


\begin{lemma}\label{Chem}\cite[Th. 2.3]{CSW}
	Let $\mathbb{X}$ be a Banach space. Suppose $\epsilon \in [0,1)$ and let $x, y \in \mathbb{X}.$  Then $x \perp_B^\epsilon y$ if and only if  there exists $x^* \in J(x)$ such that $|x^*(y)| \leq \epsilon\|y\|.$
\end{lemma}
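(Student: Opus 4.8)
The plan is to prove the two implications separately; the sufficiency (``if'') direction is a one-line estimate, while the necessity (``only if'') direction rests on a Hahn--Banach separation carried out in the scalar field $\mathbb{K}$ together with a one-sided directional-derivative formula for the norm. For sufficiency, suppose there is $x^* \in J(x)$ with $|x^*(y)| \le \epsilon\|y\|$. For every $\lambda \in \mathbb{K}$, combining $\|x + \lambda y\| \ge |x^*(x+\lambda y)|$ with the reverse triangle inequality and the relation $x^*(x)=\|x\|$ yields $\|x+\lambda y\| \ge \|x\| - |\lambda|\,|x^*(y)| \ge \|x\| - \epsilon |\lambda|\, \|y\|$, which is exactly $x \perp_B^\epsilon y$.

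For necessity, assume $x \perp_B^\epsilon y$. First I would record that $J(x)$ is nonempty (Hahn--Banach), convex, and weak*-compact (Banach--Alaoglu). Since the evaluation $x^* \mapsto x^*(y)$ is weak*-continuous and linear, its image $K = \{x^*(y) : x^* \in J(x)\}$ is a compact convex subset of $\mathbb{K}$, and the conclusion reduces to showing that $K$ meets the closed disk $\mathcal{D}(\epsilon\|y\|)$. I would argue by contradiction: if $K \cap \mathcal{D}(\epsilon\|y\|) = \emptyset$, then separating these two disjoint compact convex sets produces a scalar $\alpha$ with $|\alpha|=1$ such that $\operatorname{Re}\big(\bar\alpha\, x^*(y)\big) > \epsilon\|y\|$ for every $x^* \in J(x)$; equivalently, setting $u = -\bar\alpha y$, one has $\max_{x^*\in J(x)} \operatorname{Re} x^*(u) < -\epsilon\|y\|$.

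To turn this into a violation of $\epsilon$-orthogonality I would invoke the formula $\lim_{t\to 0^+} t^{-1}\big(\|x+tu\| - \|x\|\big) = \max_{x^*\in J(x)} \operatorname{Re} x^*(u)$. Its $\ge$ half is immediate from $\|x+tu\| \ge \operatorname{Re} x^*(x+tu)$, whereas for the $\le$ half I would choose supporting functionals $f_t \in J(x+tu)$, extract a weak*-limit point $f$ as $t \to 0^+$, verify that $f \in J(x)$, and pass to the limit in $\|x+tu\| - \|x\| \le t\,\operatorname{Re} f_t(u)$. Granting the formula, the separation forces $\|x+tu\| < \|x\| - \epsilon\|y\|\,t$ for all sufficiently small $t>0$; writing $\lambda = -t\bar\alpha$ (so $\lambda y = tu$ and $|\lambda| = t$) this reads $\|x+\lambda y\| < \|x\| - \epsilon\|\lambda y\|$, contradicting $x \perp_B^\epsilon y$. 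Hence $K \cap \mathcal{D}(\epsilon\|y\|) \ne \emptyset$, which furnishes the desired $x^*$.

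I expect the main obstacle to be the necessity direction, and within it the directional-derivative formula: its nontrivial inequality requires a genuine weak*-compactness passage to the limit among supporting functionals. The separation step treats the complex and real cases uniformly, since the argument above specializes to the real line (with $\alpha \in \{\pm 1\}$), so no separate treatment is needed; in the real case one may alternatively note that $\{x^*(y) : x^* \in J(x)\}$ is precisely the interval whose endpoints are the one-sided derivatives of $t \mapsto \|x+ty\|$ at $t=0$, and that this interval meets $[-\epsilon\|y\|,\epsilon\|y\|]$ as an immediate consequence of the two one-sided bounds forced by $\epsilon$-orthogonality.
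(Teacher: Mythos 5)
Your proof is correct and complete, but note that the paper offers no proof of this lemma at all: it is stated as a quotation of \cite[Th.~2.3]{CSW}, so the comparison must be with that source rather than with an in-paper argument. Your sufficiency direction is the standard one-line estimate. Your necessity direction is also sound: $J(x)$ is nonempty, convex and weak*-compact, evaluation at $y$ is weak*-continuous, so $K=\{x^*(y):x^*\in J(x)\}$ is a compact convex subset of $\mathbb{K}$; strict separation of $K$ from $\mathcal{D}(\epsilon\|y\|)$ in the scalar plane, the directional-derivative formula $\lim_{t\to 0^+}t^{-1}\big(\|x+tu\|-\|x\|\big)=\max_{x^*\in J(x)}\operatorname{Re}x^*(u)$ (whose nontrivial half you correctly obtain from weak*-cluster points of supporting functionals $f_t\in J(x+tu)$), and the substitution $\lambda=-t\bar\alpha$ together produce a genuine violation of $\|x+\lambda y\|\geq\|x\|-\epsilon\|\lambda y\|$. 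This route differs from the original argument in \cite{CSW}, which is written for real spaces and proceeds through the one-sided derivatives of $t\mapsto\|x+ty\|$ at $t=0$ and the classical identification of $\{x^*(y):x^*\in J(x)\}$ with the interval they bound --- exactly the alternative you sketch in your closing remark; your separation argument buys a uniform treatment of $\mathbb{R}$ and $\mathbb{C}$, which matters since the paper uses the lemma over both fields. It is also worth observing that your set $K$ is precisely the set $V(\mathbb{X},x,y)$ through which the paper phrases Theorem \ref{V:set}, so your reduction to ``$K$ meets $\mathcal{D}(\epsilon\|y\|)$'' is the same numerical-range reformulation the paper itself exploits, and your weak*-cluster-point passage is the same device the paper uses in Theorems \ref{ALUR} and \ref{theorem:face}. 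Two trifles, neither a gap: the extraction of $f$ should be phrased via cluster points (nets) rather than sequential weak*-limits, since $B_{\mathbb{X}^*}$ need not be weak*-sequentially compact (only the bounded scalar sequences $f_t(x)$ and $f_t(u)$ need honest convergence, and for those cluster points suffice); and the statement implicitly presumes $x\neq 0$, as $J(x)$ is defined in the paper only for nonzero $x$.
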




A more general version of Lemma \ref{James} has been obtained in \cite[Cor. 2.5]{MMQRS} as follows:

\begin{theorem}
	Let $\mathbb{X}$ be a Banach space and let $x \in S_\mathbb{X}.$ Suppose $C \subseteq \mathbb{X}^*$ is such that $B_{\mathbb{X}^*} = \overline{co(C)}^{w^*}.$ Then for any $y \in \mathbb{X},$ $x \perp_B y$ if and only if $0 \in co(\{\lim x_n^*(y) : x_n^* \in C \, \mbox{and}\, \lim x_n^*(x) =1 \}).$
\end{theorem}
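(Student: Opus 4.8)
The plan is to derive both implications from James' characterization of Birkhoff--James orthogonality (Lemma \ref{James}), combined with the weak-* compactness of $B_{\mathbb{X}^*}$ (Banach--Alaoglu) and Milman's partial converse to the Krein--Milman theorem. Throughout, write $W = \{\lim_n x_n^*(y) : \{x_n^*\} \subseteq C,\ \lim_n x_n^*(x) = 1\}$ for the set appearing in the statement; since $\overline{co(C)}^{w^*} = B_{\mathbb{X}^*}$ forces $C \subseteq B_{\mathbb{X}^*}$, we have $W \subseteq \mathcal{D}(\|y\|)$, so $W$ is bounded.

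For the sufficiency ($0 \in co(W) \Rightarrow x \perp_B y$) I would write $0 = \sum_{i=1}^{k} t_i w_i$ with $t_i \geq 0$, $\sum_i t_i = 1$, and $w_i = \lim_n x_{n,i}^*(y)$ for suitable $\{x_{n,i}^*\} \subseteq C$ with $x_{n,i}^*(x) \to 1$. Setting $\phi_n = \sum_{i=1}^{k} t_i x_{n,i}^* \in co(C) \subseteq B_{\mathbb{X}^*}$, one gets $\phi_n(x) \to 1$ and $\phi_n(y) \to 0$. By Banach--Alaoglu, $\{\phi_n\}$ has a weak-* cluster point $x^* \in B_{\mathbb{X}^*}$, and passing to the corresponding subnet yields $x^*(x) = 1$ (so $x^* \in J(x)$) and $x^*(y) = 0$; Lemma \ref{James} then gives $x \perp_B y$. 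This direction is essentially routine.

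The substance lies in the necessity ($x \perp_B y \Rightarrow 0 \in co(W)$). By Lemma \ref{James} there is $x^* \in J(x)$ with $x^*(y) = 0$, so $0$ belongs to the set $K = \{z^*(y) : z^* \in J(x)\}$. Since $J(x)$ is a weak-* compact convex subset of $B_{\mathbb{X}^*}$ and $z^* \mapsto z^*(y)$ is affine and weak-* continuous, $K$ is a compact convex subset of $\mathbb{K}$ (a segment when $\mathbb{K} = \mathbb{R}$, a planar convex body when $\mathbb{K} = \mathbb{C}$). By Carath\'eodory's theorem I would write $0 = \sum_{i=1}^{3} t_i v_i$ with $v_i \in Ext(K)$, $t_i \geq 0$, $\sum_i t_i = 1$. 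For each extreme point $v_i$ of $K$, the fibre $\{z^* \in J(x) : z^*(y) = v_i\}$ is a nonempty weak-* compact face of $J(x)$; by Krein--Milman it has an extreme point $e_i$, which is then extreme in $J(x)$, and since $J(x)$ is itself a face of $B_{\mathbb{X}^*}$, we get $e_i \in Ext(B_{\mathbb{X}^*})$ with $e_i(y) = v_i$.

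It then remains to realize each $v_i = e_i(y)$ as an element of $W$, and this is the step I expect to be the main obstacle, because $W$ is defined through sequences while $B_{\mathbb{X}^*}$ need not be weak-* sequentially accessible (it is weak-* metrizable only under separability hypotheses). The way around it is to observe that matching $e_i$ merely on the two fixed vectors $x$ and $y$ requires only a two-coordinate weak-* neighbourhood: Milman's theorem gives $e_i \in Ext(B_{\mathbb{X}^*}) \subseteq \overline{C}^{w^*}$, and for each $m \in \mathbb{N}$ the set $\{c \in C : |c(x) - 1| < 1/m,\ |c(y) - v_i| < 1/m\}$ is nonempty, being the trace on $C$ of a weak-* neighbourhood of $e_i$ (recall $e_i(x)=1$, $e_i(y)=v_i$). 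Choosing $c_{m,i}$ in it produces a sequence in $C$ with $c_{m,i}(x) \to 1$ and $c_{m,i}(y) \to v_i$, whence $v_i \in W$. Consequently $0 = \sum_i t_i v_i \in co(W)$, completing the argument. In the real case $K$ is an interval $[a,b] \ni 0$ and one needs only its two endpoints, so Carath\'eodory is invoked with two points rather than three.
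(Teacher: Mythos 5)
Your proof is correct, but it takes a genuinely different route from the paper. The paper does not prove this statement at all: it quotes it directly from \cite[Cor. 2.5]{MMQRS}, and when it proves the approximate analogue (Theorem \ref{V:set}) it simply combines the pointwise characterization of orthogonality (Lemma \ref{James}, resp.\ Lemma \ref{Chem}) with the numerical-range identity $V(\mathbb{X},x,y)=co\left(\{\lim x_n^*(y): x_n^* \in C, \ \lim x_n^*(x)=1\}\right)$ of \cite[Th. 2.3]{MMQRS}, so the whole content is delegated to that citation. Your argument instead reconstructs the identity from scratch: your sufficiency direction amounts to the containment $co(W)\subseteq V(\mathbb{X},x,y)$ via Banach--Alaoglu and a weak-* cluster point (one could even avoid the cluster point, since $\|x+\lambda y\|\geq |\phi_n(x)+\lambda\phi_n(y)|\to 1$ directly), while your necessity direction is in effect $V(\mathbb{X},x,y)\subseteq co(W)$: you decompose $0$ over the extreme points of the compact convex scalar set $K$ (using Minkowski--Krein--Milman in dimension at most $2$ plus Carath\'{e}odory), pull each extreme value back to an extreme point of $B_{\mathbb{X}^*}$ through a chain of faces (the fibre of an extreme value is a face of $J(x)$, and $J(x)$ is a face of $B_{\mathbb{X}^*}$), and then use Milman's partial converse to place those extreme points in $\overline{C}^{w^*}$. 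Your key observation --- that matching a functional only on the two vectors $x$ and $y$ requires just a two-coordinate weak-* neighbourhood, so sequences in $C$ suffice even though $B_{\mathbb{X}^*}$ need not be weak-* metrizable --- is precisely what makes the sequential formulation legitimate, and it is the crux of the cited MMQRS argument as well. What your route buys is self-containedness and transparency about exactly which convexity tools are needed; what the paper's route buys is brevity, at the cost of resting entirely on the quoted identity.
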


Following the same technique, a generalized version of Lemma \ref{Chem} can be derived as follows:
\begin{theorem}\label{V:set}
	Let $\mathbb{X}$ be a Banach space and let $ x, y \in S_{\mathbb{X}}$.  Suppose  $ C \subset B_{\mathbb{X}^*}$ is  such that $B_{\mathbb{X}^*}= \overline{co(C)}^{w^*}.$
	Then the following are equivalent:
	\begin{itemize}
		\item[(i)] $x \perp_B^\epsilon y$
		\item[(ii)] $co (\{\lim x_n^*(y): x_n^* \in C ~\forall n \in \mathbb{N}, \lim x_n^*(x) = 1	\}) \cap \mathcal{D}(\epsilon) \neq \emptyset. $
	\end{itemize}
\end{theorem}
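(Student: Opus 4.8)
The plan is to reduce statement (i) to a condition on the single set $A := \{\, x^*(y) : x^* \in J(x)\,\} \subseteq \mathbb{K}$ and then to identify $A$ with $co(V)$, where
$$V := \{\, \lim_n x_n^*(y) : x_n^* \in C ~\forall n \in \mathbb{N}, ~ \lim_n x_n^*(x) = 1 \,\}.$$
First, since $y \in S_{\mathbb{X}}$, Lemma \ref{Chem} shows that (i) holds if and only if there is some $x^* \in J(x)$ with $|x^*(y)| \le \epsilon$, that is, if and only if $A \cap \mathcal{D}(\epsilon) \ne \emptyset$. As $J(x)$ is a $w^*$-compact convex subset of $B_{\mathbb{X}^*}$ and $x^* \mapsto x^*(y)$ is $w^*$-continuous and affine, $A$ is a compact convex subset of $\mathbb{K}$. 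Thus everything reduces to proving the identity $co(V) = A$; granting this, (ii) reads $co(V) \cap \mathcal{D}(\epsilon) = A \cap \mathcal{D}(\epsilon) \ne \emptyset$, which is exactly (i).

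To establish $co(V) = A$ I would follow the same technique as in the preceding theorem, passing to a finite-dimensional picture. Consider the map $\Phi : (B_{\mathbb{X}^*}, w^*) \to \mathbb{R} \times \mathbb{K}$ defined by $\Phi(x^*) = (\operatorname{Re} x^*(x),\, x^*(y))$; it is real-affine and $w^*$-to-norm continuous, and its target is a finite-dimensional real space. Writing $D := \Phi(B_{\mathbb{X}^*})$, the set $D$ is compact and convex, and since $B_{\mathbb{X}^*} = \overline{co(C)}^{w^*}$ a standard argument (the preimage under $\Phi$ of a closed convex set is $w^*$-closed and convex, combined with $w^*$-compactness) gives $D = \overline{co(\Phi(C))} = \overline{co}\big(\overline{\Phi(C)}\big)$, the last being the closed convex hull of the compact set $\overline{\Phi(C)}$.

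Next I would locate the face of $D$ cut out by the hyperplane $H := \{(u,w) : u = 1\}$; since $\max\{\operatorname{Re} x^*(x) : x^* \in B_{\mathbb{X}^*}\} = \|x\| = 1$, the hyperplane $H$ supports $D$ and $F := D \cap H$ is an exposed face. Because $\operatorname{Re} x^*(x) = 1$ together with $|x^*(x)| \le 1$ forces $x^*(x) = 1$ (and hence $x^* \in J(x)$), one checks that $F = \{1\} \times A$ and that $\overline{\Phi(C)} \cap H = \{1\} \times V$. Now Milman's theorem applied to $D = \overline{co}\big(\overline{\Phi(C)}\big)$ yields $Ext(D) \subseteq \overline{\Phi(C)}$; as extreme points of a face are extreme points of the whole body, $Ext(F) \subseteq Ext(D) \cap H \subseteq \overline{\Phi(C)} \cap H = \{1\} \times V$. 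Since $F$ is a compact convex subset of a finite-dimensional space, Krein--Milman gives $F = co(Ext(F)) \subseteq \{1\} \times co(V)$, while the reverse inclusion $\{1\} \times co(V) \subseteq F$ is immediate from $V \subseteq A$ and the convexity of $F$. Projecting $F = \{1\} \times co(V)$ onto the second coordinate gives $A = co(V)$; finiteness of the dimension also guarantees that $co(V)$ is already compact, hence closed, so no additional closure is needed.

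The routine parts are the reduction via Lemma \ref{Chem} and the elementary verifications that $A$ and $F$ are convex and that $\Phi$ behaves as claimed. The main obstacle---and the only place where the hypothesis $B_{\mathbb{X}^*} = \overline{co(C)}^{w^*}$ is genuinely used---is the identity $co(V) = A$, whose proof rests on combining Milman's theorem with the fact that, in finite dimensions, an exposed face is the convex hull of its extreme points. The complex case is handled by the same $\Phi$, now taking values in $\mathbb{R} \times \mathbb{C} \cong \mathbb{R}^3$, with $\mathcal{D}(\epsilon)$ a disk rather than an interval; no new ideas are required.
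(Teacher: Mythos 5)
Your proposal is correct, and its first half coincides exactly with the paper's: both invoke Lemma \ref{Chem} (with $\|y\|=1$) to convert (i) into the nonemptiness of $A \cap \mathcal{D}(\epsilon)$, where $A = \{x^*(y): x^*\in J(x)\}$ is the numerical-range-type set $V(\mathbb{X},x,y)$. The difference is in the second half: the paper simply cites \cite[Th. 2.3]{MMQRS} for the identity $A = co\big(\{\lim x_n^*(y): x_n^*\in C\ \forall n,\ \lim x_n^*(x)=1\}\big)$, whereas you prove this identity from scratch by passing to the finite-dimensional image of $\Phi(x^*)=(\operatorname{Re}x^*(x),\,x^*(y))$, identifying $D=\Phi(B_{\mathbb{X}^*})$ with $\overline{co}\big(\overline{\Phi(C)}\big)$, cutting out the exposed face $F = D \cap \{u=1\} = \{1\}\times A$, and then combining Milman's partial converse ($Ext(D)\subseteq \overline{\Phi(C)}$), the fact that extreme points of a face are extreme in the whole body, and the finite-dimensional (Minkowski) form of Krein--Milman to get $F=\{1\}\times co(V)$. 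The argument is sound, including the points you leave as ``one checks'': the key observation that $\operatorname{Re}c_n^*(x)\to 1$ together with $|c_n^*(x)|\le 1$ forces $c_n^*(x)\to 1$ is what makes $\overline{\Phi(C)}\cap H = \{1\}\times V$ true, and $V\subseteq A$ follows at once from $\overline{\Phi(C)}\subseteq D$. What the paper's route buys is brevity, since the identity is verbatim the cited numerical-range result; what your route buys is self-containedness: it makes visible exactly where the hypothesis $B_{\mathbb{X}^*}=\overline{co(C)}^{w^*}$ enters (through Milman's theorem applied to $D$) and why the plain convex hull, with no closure, suffices in (ii) (compactness of the face $F$ in finite dimensions). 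In effect you have reconstructed the proof of the result the paper quotes, which is also the ``same technique'' the authors allude to when deriving this theorem from \cite{MMQRS}.
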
 
\begin{proof}
	From Lemma \ref{Chem}, $x \perp_B^\epsilon y$ if and only if there exists $x^* \in J(x)$ such that $x^*(y) \in \mathcal{D}(\epsilon).$ Clearly, this is equivalent to $V(\mathbb{X}, x, y) \cap \mathcal{D}(\epsilon) \neq \emptyset,$ where $V(\mathbb{X}, x, y)= \{\phi(y): \phi \in S_{\mathbb{X}^*}, \phi(x)=1 \}.$ Now from   \cite[Th. 2.3]{MMQRS} we note that 
	$$V(\mathbb{X}, x, y)   = co ( \{\lim x_n^*(y): x_n^* \in C ~\forall n \in \mathbb{N}, \lim x_n^*(x) = 1	\}),$$
	thereby finishing the proof.
\end{proof}

Applying the above result, it is rather easy to characterize the approximate Birkhoff-James orthogonality in $\ell_\infty(K, \mathbb{X})$ and in $C_0(K, \mathbb{X}).$ Let us first note the following characterization of Birkhoff-James orthogonality.
\begin{theorem}\label{martin1} \cite[Th. 3.2]{MMQRS}
	Let $K$ be a non-empty set and let $\mathbb{X}$ be a Banach space. Let $C \subset S_{\mathbb{X}^*}$ be such that $B_{\mathbb{X}^*}= \overline{co(C)}^{w^*}.$  Suppose that $f, g \in \ell_\infty(K, \mathbb{X}).$ Then $	f \perp_B g $ if and only if 
	\[
	0 \in 	co\bigg(\bigg\{ \lim y_n^*(g(k_n)): k_n \in K, y_n^* \in C, ~\forall n \in \mathbb{N},   \lim y_n^*(f(k_n))= \|f\|\bigg\}\bigg) .
	\]
\end{theorem}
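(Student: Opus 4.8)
The plan is to deduce the statement from the generalized form of Lemma \ref{James} recorded above (\cite[Cor. 2.5]{MMQRS}) by applying it to the Banach space $\ell_\infty(K,\mathbb{X})$ equipped with a concrete norming set assembled from point evaluations. By homogeneity of Birkhoff-James orthogonality we may assume $\|f\|=1$; the general case follows by replacing $f$ with $f/\|f\|$, which turns the constraint $\lim y_n^*(f(k_n))=\|f\|$ into $\lim y_n^*\big(f(k_n)/\|f\|\big)=1$ and leaves the left-hand side of the asserted equivalence unchanged.

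For each $k \in K$ and each $y^* \in C$, let $\delta_k \otimes y^* \in (\ell_\infty(K,\mathbb{X}))^*$ denote the evaluation functional given by $(\delta_k \otimes y^*)(h)=y^*(h(k))$, and set
$$C' = \{\delta_k \otimes y^* : k \in K,\ y^* \in C\} \subset B_{(\ell_\infty(K,\mathbb{X}))^*}.$$
The crucial step is to verify the hypothesis of the general theorem for $C'$, namely that $B_{(\ell_\infty(K,\mathbb{X}))^*} = \overline{co(C')}^{w^*}$. The inclusion $\overline{co(C')}^{w^*} \subseteq B_{(\ell_\infty(K,\mathbb{X}))^*}$ is immediate. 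For the reverse inclusion I would argue by weak\textsuperscript{*}-Hahn-Banach separation: if some $\Lambda$ in the dual ball failed to lie in $\overline{co(C')}^{w^*}$, there would exist a weak\textsuperscript{*}-continuous functional, that is, an element $h \in \ell_\infty(K,\mathbb{X})$, and a real $\alpha$ separating $\Lambda$ from $C'$, yielding $\operatorname{Re} y^*(h(k)) \le \alpha$ for all $k$ and $y^*$. Since $B_{\mathbb{X}^*}=\overline{co(C)}^{w^*}$ forces $\|x\| = \sup_{y^* \in C}\operatorname{Re} y^*(x)$ for every $x \in \mathbb{X}$ (the weak\textsuperscript{*}-continuous affine functional $\operatorname{Re}(\cdot)(x)$ attains the same supremum on $C$ as on its weak\textsuperscript{*}-closed convex hull), taking the supremum over $k$ and $y^*$ gives $\|h\| = \sup_{k}\|h(k)\| \le \alpha$, whence $\operatorname{Re}\Lambda(h) \le \|\Lambda\|\,\|h\| \le \alpha$, contradicting the separation.

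With $C'$ established as an admissible norming set, I would apply the generalized James characterization with $x=f$ and $C'$ in place of $C$. Any sequence in $C'$ has the form $x_n^* = \delta_{k_n} \otimes y_n^*$ with $k_n \in K$ and $y_n^* \in C$, so that $x_n^*(f) = y_n^*(f(k_n))$ and $x_n^*(g)=y_n^*(g(k_n))$. Hence the constraint $\lim x_n^*(f)=1=\|f\|$ becomes $\lim y_n^*(f(k_n))=\|f\|$, and the conclusion $0 \in co(\{\lim x_n^*(g)\})$ becomes precisely $0 \in co(\{\lim y_n^*(g(k_n))\})$ taken over admissible sequences, which is the desired equivalence.

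The main obstacle is the norming-set lemma of the second paragraph: singling out the right family $C'$ and, above all, showing that its weak\textsuperscript{*}-closed convex hull fills the entire dual ball. The separation argument rests on the fact that $C$, being a norming set for $\mathbb{X}$ in the sense $B_{\mathbb{X}^*}=\overline{co(C)}^{w^*}$, upgrades evaluation-point by evaluation-point to a norming set for the sup-norm space $\ell_\infty(K,\mathbb{X})$; some care is needed to phrase the estimates with real parts so that the argument is valid over both $\mathbb{R}$ and $\mathbb{C}$. Once this lemma is in place, the theorem is merely a transcription of the abstract criterion through the evaluation functionals, together with the homogeneity reduction to $\|f\|=1$.
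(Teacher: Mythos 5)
Your proposal is correct and follows essentially the route the paper itself indicates: the paper quotes this statement from \cite[Th.~3.2]{MMQRS} and points out that it follows by applying the generalized James characterization (\cite[Cor.~2.5]{MMQRS}) to $\ell_\infty(K,\mathbb{X})$, which is exactly what you do by verifying that $C'=\{\delta_k\otimes y^*\}$ is a norming set via weak\textsuperscript{*} separation and then transcribing the abstract criterion through the evaluation functionals. The only (trivial) omission is the degenerate case $f=0$, where the homogeneity reduction is unavailable but both sides of the equivalence hold automatically.
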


\begin{theorem}\label{martin:continuous}\cite[Th. 3.5]{MMQRS}
	Let $K$ be a locally compact Hausdorff space and let $\mathbb{X}$ be a Banach space. Suppose   $C \subset S_{\mathbb{X}^*}$ is such that $B_{\mathbb{X}^*}= \overline{co(C)}^{w^*}.$  Then  for $f, g \in C_0(K, \mathbb{X} ),$   
	\[ f \perp_B g \iff	0 \in co\bigg(\bigg\{  y^*(g(k)): k \in K, y^* \in C,   y^*(f(k))= \|f\|\bigg\}\bigg).
	\]
\end{theorem}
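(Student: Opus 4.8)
The plan is to characterize $f\perp_B g$ through a supporting functional, following Lemma \ref{James}, and to use the evaluation maps $y^*\otimes\delta_k\colon h\mapsto y^*(h(k))$ as the extreme building blocks of $B_{C_0(K,\mathbb{X})^*}$. The reverse implication is the easy one. Assuming $0$ lies in the displayed convex hull, I would write $0=\sum_{i=1}^m t_i\, y_i^*(g(k_i))$ with $t_i\ge 0$, $\sum_i t_i=1$, $y_i^*\in C$, $k_i\in K$ and $y_i^*(f(k_i))=\|f\|$, and define $\Phi\in C_0(K,\mathbb{X})^*$ by $\Phi(h)=\sum_i t_i\, y_i^*(h(k_i))$. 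Since each $y_i^*\in S_{\mathbb{X}^*}$, the estimate $|\Phi(h)|\le\sum_i t_i\|h(k_i)\|\le\|h\|$ gives $\|\Phi\|\le 1$, while $\Phi(f)=\sum_i t_i\|f\|=\|f\|$ forces $\Phi\in J(f)$; as $\Phi(g)=0$ by construction, Lemma \ref{James} yields $f\perp_B g$. Note that this half uses nothing beyond the exact support equality.

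For the forward implication I would first reduce to the limit version and then remove the limits by compactness. Because $B_{\mathbb{X}^*}=\overline{co(C)}^{w^*}$, the set $\{y^*\otimes\delta_k:y^*\in C,\ k\in K\}$ has weak-$*$ closed convex hull equal to $B_{C_0(K,\mathbb{X})^*}$, and since $C_0(K,\mathbb{X})$ carries the supremum norm inherited from $\ell_\infty(K,\mathbb{X})$, Theorem \ref{martin1} applies verbatim and gives
\[
0\in co\Big(\Big\{\lim_n y_n^*(g(k_n)):k_n\in K,\ y_n^*\in C,\ \lim_n y_n^*(f(k_n))=\|f\|\Big\}\Big).
\]
Assuming $f\neq 0$, for each generating limit $L=\lim_n y_n^*(g(k_n))$ the inequalities $|y_n^*(f(k_n))|\le\|f(k_n)\|\le\|f\|$ force $\|f(k_n)\|\to\|f\|$, so the defining property of $C_0(K,\mathbb{X})$ confines the $k_n$ to a fixed compact set; passing to a subnet, $k_n\to k_0\in M_f$, and by weak-$*$ compactness of $B_{\mathbb{X}^*}$ a further subnet gives $y_n^*\to y_0^*\in B_{\mathbb{X}^*}$.

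The step I expect to be the main obstacle is converting these limits into genuine evaluations meeting the exact support condition while keeping the functionals inside the prescribed set $C$. The joint continuity of $(k,y^*)\mapsto y^*(h(k))$ in the norm topology of $K$ and the weak-$*$ topology of $B_{\mathbb{X}^*}$ (valid for every $h\in C_0(K,\mathbb{X})$ because $f$ and $g$ are continuous) upgrades the two limits to $y_0^*(f(k_0))=\|f\|$ and $y_0^*(g(k_0))=L$, so that $y_0^*\in J(f(k_0))$ and $k_0\in M_f$. What remains delicate is that $y_0^*$ arises only in the weak-$*$ closure of $C$; to express $L=y_0^*(g(k_0))$ through values attained on $C$ itself, as the statement demands, I would apply the scalar characterization \cite[Th. 2.3]{MMQRS} to the pair $f(k_0),g(k_0)\in\mathbb{X}$ and exploit the compactness of $M_f$ to see that the value set over $J(f(k_0))$ and its $C$-counterpart share the same (compact) convex hull. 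This interplay between the weak-$*$ approximation of supporting functionals by $C$ and the requirement of an exact, limit-free membership is precisely where local compactness of $K$ and continuity of the functions are indispensable.
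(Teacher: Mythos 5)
Your strategy coincides with the intended one: the paper's own ``proof'' is nothing more than the citation of \cite[Th.~3.5]{MMQRS} together with the remark that the compact-$K$ argument replicates for $C_0(K,\mathbb{X})$ because $M_f$ is non-empty and compact, and that is exactly what you do. The easy direction (assembling $\Phi=\sum_i t_i\, y_i^*\otimes\delta_{k_i}$, checking $\|\Phi\|\leq 1$, $\Phi(f)=\|f\|$, $\Phi(g)=0$, and invoking Lemma \ref{James}) is complete and correct. For the hard direction, the reduction to Theorem \ref{martin1} is legitimate (Birkhoff--James orthogonality only depends on the norms $\|f+\lambda g\|$, which agree in $C_0(K,\mathbb{X})$ and $\ell_\infty(K,\mathbb{X})$), the use of the $C_0$-decay condition to trap $\{k_n\}$ in a compact set, the extraction of subnets $k_n\to k_0\in M_f$ and $y_n^*\overset{w^*}{\longrightarrow} y_0^*$, and the upgrade of the two limits to $y_0^*(f(k_0))=\|f\|$, $y_0^*(g(k_0))=L$ are all correct.

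The step you flagged as delicate is, however, a genuine gap, and it cannot be closed: your argument produces $y_0^*\in\overline{C}^{w^*}$, and there is no way to push it back into $C$, because the statement as printed (with $y^*\in C$) is false. Take $K$ a singleton (so $C_0(K,\mathbb{X})\cong\mathbb{X}$), $\mathbb{X}$ the Euclidean plane, $f\equiv(1,0)$, $g\equiv(0,1)$, and $C=S_{\mathbb{X}^*}\setminus J\big((1,0)\big)$. Then $\overline{co(C)}^{w^*}=B_{\mathbb{X}^*}$ and $f\perp_B g$, yet no $y^*\in C$ satisfies $y^*(f(k))=\|f\|$, so the displayed set is empty and its convex hull cannot contain $0$. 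For the same reason your proposed repair via \cite[Th.~2.3]{MMQRS} is doomed: that theorem again describes a convex hull of \emph{limits} of values on $C$, i.e.\ exact attainment only for functionals in $\overline{C}^{w^*}$, never membership in $C$ itself. What your subnet argument proves, completely and correctly, is the theorem with $\overline{C}^{w^*}$ in place of $C$ in the displayed set; that is the true statement (it is also the only form under which the paper's later uses, e.g.\ Lemma \ref{lemma:outside}, survive with an arbitrary admissible $C$). So the right conclusion is not to search for a further argument but to recognize the missing weak-star closure as a defect of the statement: delete your final attempted repair, replace $C$ by $\overline{C}^{w^*}$ in the conclusion, and your proof is complete.
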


Although in \cite[Th. 3.5]{MMQRS}, the characterization has been given in $C(K, \mathbb{X})$ space considering $K$ to be compact Hausdorff, we can replicate the arguments for the space $C_0(K, \mathbb{X}),$ where $K$ is a locally compact Hausdorff space. In this context, the crucial thing to observe is that for any $f \in C_0(K, \mathbb{X}),$ $M_f$ is non-empty and compact.\\

Using similar technique as in Theorem $ 3.2 $ and Theorem $ 3.5 $ of \cite{MMQRS}, and using Theorem \ref{V:set}, we obtain the following characterizations of approximate Birkhoff-James orthogonality:

\begin{theorem}\label{approximate:martin1}
	Let $K$ be a non-empty set and let $\mathbb{X}$ be a Banach space. Suppose that $C \subset S_{\mathbb{X}^*}$ is such that $B_{\mathbb{X}^*}= \overline{co(C)}^{w^*}.$  Then  for $f, g \in \ell_\infty(K, \mathbb{X}),$  $	f \perp_B^\epsilon g $ if and only if 
	\[
	co\bigg(\bigg\{ \lim y_n^*(g(k_n)): k_n \in K, y_n^* \in C, ~\forall n \in \mathbb{N},   \lim y_n^*(f(k_n))= \|f\|\bigg\}\bigg) \cap \mathcal{D}(\epsilon\|g\|) \neq \emptyset.
	\]
\end{theorem}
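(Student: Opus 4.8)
The plan is to mimic the proof strategy already established for Birkhoff--James orthogonality in $\ell_\infty(K, \mathbb{X})$ (Theorem~\ref{martin1}), but now feeding the approximate characterization Theorem~\ref{V:set} into the argument instead of Theorem~2.3 of \cite{MMQRS}. Concretely, the starting point is Lemma~\ref{Chem}: $f \perp_B^\epsilon g$ holds if and only if there exists a supporting functional $F \in J(f) \subseteq S_{\ell_\infty(K,\mathbb{X})^*}$ with $|F(g)| \leq \epsilon\|g\|$, i.e. $F(g) \in \mathcal{D}(\epsilon\|g\|)$. Equivalently, writing $V(\ell_\infty(K,\mathbb{X}), f, g) = \{F(g) : F \in S_{\ell_\infty(K,\mathbb{X})^*},\, F(f) = \|f\|\}$ for the usual numerical-range-type set, the condition becomes
\[
V(\ell_\infty(K,\mathbb{X}), f, g) \cap \mathcal{D}(\epsilon\|g\|) \neq \emptyset.
\]
So the entire content of the theorem reduces to identifying this set $V$ with the convex hull appearing in the statement.

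First I would recall the norming set for $\ell_\infty(K, \mathbb{X})$. Since $B_{\mathbb{X}^*} = \overline{co(C)}^{w^*}$, the natural candidate norming set for $\ell_\infty(K,\mathbb{X})$ is the collection of evaluation-composed functionals $\{ y^* \circ \delta_k : k \in K,\, y^* \in C\}$, where $\delta_k$ is evaluation at $k$; one checks that the $w^*$-closed convex hull of this set is all of $B_{\ell_\infty(K,\mathbb{X})^*}$, exactly as in the proof of Theorem~3.2 in \cite{MMQRS}. Then I would apply Theorem~\ref{V:set} verbatim, with $\mathbb{X}$ replaced by $\ell_\infty(K,\mathbb{X})$, $x$ replaced by $f/\|f\|$, $y$ by $g/\|g\|$ (normalizing so as to match the $S_{\mathbb{X}}$ hypothesis of Theorem~\ref{V:set}), and $C$ replaced by the above evaluation set. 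The formula from Theorem~\ref{V:set}, namely that $V$ equals $co(\{\lim x_n^*(y) : x_n^* \in C,\ \lim x_n^*(x) = 1\})$, then translates directly: a generic sequence from the evaluation norming set has the form $y_n^* \circ \delta_{k_n}$, so $x_n^*(g) = y_n^*(g(k_n))$ and the norming condition $\lim x_n^*(f) = \|f\|$ becomes $\lim y_n^*(f(k_n)) = \|f\|$. This yields precisely the convex hull in the statement.

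The only genuine bookkeeping is the normalization: Theorem~\ref{V:set} is stated for $x, y \in S_{\mathbb{X}}$, whereas $f$ and $g$ here are arbitrary, so I would run the argument with unit vectors and then rescale, observing that $\mathcal{D}(\epsilon)$ for the normalized $g/\|g\|$ corresponds to $\mathcal{D}(\epsilon\|g\|)$ for $g$ by homogeneity of the functionals. The step I expect to require the most care is verifying that the limits $\lim y_n^*(g(k_n))$ that arise are exactly the ones produced by evaluating $w^*$-limit functionals on $g$ --- that is, that passing to the convex hull of limits along the norming sequences genuinely recaptures the full set $V(\ell_\infty(K,\mathbb{X}), f, g)$. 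This is precisely the technical heart of Theorem~3.2 of \cite{MMQRS}, and since the excerpt explicitly authorizes reusing that technique, I would cite it rather than reprove it, inserting Theorem~\ref{V:set} at the single point where the exact orthogonality argument invokes its exact-orthogonality analogue. The intersection-with-$\mathcal{D}(\epsilon\|g\|)$ condition then simply rides along, replacing the condition ``$0 \in$'' in Theorem~\ref{martin1}.
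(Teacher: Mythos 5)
Your proposal is correct and follows essentially the same route as the paper: the paper gives no detailed proof of this theorem, saying only that it follows by the technique of Theorems 3.2 and 3.5 of \cite{MMQRS} combined with Theorem \ref{V:set}, which is exactly what you carry out (reduction via Lemma \ref{Chem} to the numerical-range set $V(\ell_\infty(K,\mathbb{X}),f,g)$, then identification of that set through the evaluation norming set $\{y^*\circ\delta_k : k\in K,\ y^*\in C\}$ as in \cite[Th. 3.2]{MMQRS}). The normalization and homogeneity bookkeeping you flag is the only extra detail, and you handle it correctly.
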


\begin{theorem}\label{approximate:continuous}
	Let $K$ be a locally compact Hausdorff space and let $\mathbb{X}$ be a Banach space. Suppose  $C \subset S_{\mathbb{X}^*}$ is such that $B_{\mathbb{X}^*}= \overline{co(C)}^{w^*}.$  Then  for $f, g \in C_0(K, \mathbb{X} ),$
	\[
	f \perp_B^\epsilon g \iff
	co\bigg(\bigg\{  y^*(g(k)): k \in K, y^* \in C,   y^*(f(k))= \|f\|\bigg\}\bigg) \cap \mathcal{D}(\epsilon\|g\|) \neq \emptyset.
	\]
\end{theorem}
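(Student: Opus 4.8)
The plan is to apply the generalized criterion of Theorem \ref{V:set} directly to the Banach space $\mathbb{Z}=C_0(K,\mathbb{X})$, obtaining a characterization in terms of limits of evaluation functionals, and then to collapse this sequential condition to the stated single-point condition by means of continuity of the functions and compactness of the norm-attainment set, exactly as in the passage from Theorem \ref{martin1} to Theorem \ref{martin:continuous}. I would first assume $f\neq 0$, so that $M_f$ is nonempty and compact (the whole statement is built around this), and dispose of $g=0$ as trivial. Since $\perp_B^\epsilon$ is homogeneous in its second argument, I may rescale $g$, and after normalizing $f$ I may assume $f\in S_{\mathbb{Z}}$, so that the norm-one hypotheses of Theorem \ref{V:set} are in force.

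Next I would produce a suitable norming family for $\mathbb{Z}^*$. Consider the evaluation-type functionals $\delta_k\otimes y^*:\mathbb{Z}\to\mathbb{K}$ given by $(\delta_k\otimes y^*)(h)=y^*(h(k))$ for $k\in K$ and $y^*\in C$; each has norm one, and, as established in \cite{MMQRS}, the family $\tilde C=\{\delta_k\otimes y^*:k\in K,\ y^*\in C\}$ satisfies $B_{\mathbb{Z}^*}=\overline{co(\tilde C)}^{w^*}$, since $B_{\mathbb{X}^*}=\overline{co(C)}^{w^*}$ forces $\tilde C$ to norm $\mathbb{Z}$. Applying Theorem \ref{V:set} with this $\tilde C$, and reading off $(\delta_k\otimes y^*)(f)=y^*(f(k))$ and $(\delta_k\otimes y^*)(g)=y^*(g(k))$, yields the limit form: $f\perp_B^\epsilon g$ if and only if $co(S_2)\cap\mathcal{D}(\epsilon\|g\|)\neq\emptyset$, where $S_2=\{\lim_n y_n^*(g(k_n)):k_n\in K,\ y_n^*\in C,\ \lim_n y_n^*(f(k_n))=\|f\|\}$. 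This coincides with the conclusion of Theorem \ref{approximate:martin1} for $f,g\in C_0(K,\mathbb{X})\subseteq\ell_\infty(K,\mathbb{X})$, which is legitimate because the two spaces carry the same sup norm and $\perp_B^\epsilon$ depends only on the norm. Writing $S_1=\{y^*(g(k)):k\in K,\ y^*\in C,\ y^*(f(k))=\|f\|\}$ for the single-point set, constant sequences give $S_1\subseteq S_2$, whence the implication $(\Leftarrow)$ of the theorem is immediate.

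For the converse I would collapse the sequential condition to the single-point one. Given $k_n\in K$ and $y_n^*\in C$ with $\lim y_n^*(f(k_n))=\|f\|>0$ and $\lim y_n^*(g(k_n))=L$, the estimate $|y_n^*(f(k_n))|\le\|f(k_n)\|$ forces $k_n$ eventually into the superlevel set $\{k:\|f(k)\|\ge\|f\|/2\}$, which is compact because $f\in C_0(K,\mathbb{X})$; passing to a subnet, $k_n\to k_0\in M_f$. By weak-$*$ compactness of $B_{\mathbb{X}^*}$ I may also assume $y_n^*\overset{w^*}{\longrightarrow}y_0^*\in B_{\mathbb{X}^*}$, and continuity of $f,g$ together with $\|y_n^*\|\le 1$ lets me pass the evaluations to the limit, giving $y_0^*(f(k_0))=\|f\|$ and $y_0^*(g(k_0))=L$. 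Thus every limit point appearing in $S_2$ is realised as a value $y_0^*(g(k_0))$ attained at a single point $k_0\in M_f$, and one concludes that $co(S_2)\cap\mathcal{D}(\epsilon\|g\|)\neq\emptyset$ is equivalent to $co(S_1)\cap\mathcal{D}(\epsilon\|g\|)\neq\emptyset$, completing the equivalence.

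The main obstacle I anticipate is precisely this last collapse: the limiting functional $y_0^*$ is a priori only in the weak-$*$ closure of $C$, and the attainment $y_n^*(f(k_n))=\|f\|$ holds only in the limit, so care is needed to exhibit a genuine member of $C$ at which the attainment at $k_0$ is exact, while preserving the intersection with $\mathcal{D}(\epsilon\|g\|)$ after taking convex hulls. This is exactly the technical step settled in the passage from Theorem \ref{martin1} to Theorem \ref{martin:continuous} for ordinary Birkhoff-James orthogonality, and the same argument transfers here once the disc $\mathcal{D}(\epsilon\|g\|)$ replaces the singleton $\{0\}$.
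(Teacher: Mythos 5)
Your route is the one the paper intends: the paper gives no written proof of Theorem \ref{approximate:continuous} beyond the instruction to repeat the arguments of Theorems 3.2 and 3.5 of \cite{MMQRS} with Theorem \ref{V:set} in place of the exact-orthogonality lemma, together with the remark that $M_f$ is nonempty and compact for $f\in C_0(K,\mathbb{X})$. Your first two paragraphs are a faithful expansion of exactly that plan, and the $(\Leftarrow)$ direction, as well as the subnet argument producing $k_0$ and $y_0^*$, is carried out correctly (indeed $y_0^*(f(k_0))=\|f\|$ with $\|y_0^*\|\le 1$ and $\|f(k_0)\|\le\|f\|$ automatically forces $k_0\in M_f$ and $\|y_0^*\|=1$).

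The problem is the step you flag at the end, and it is not a transferable technicality: it is the one point where the $C_0(K,\mathbb{X})$ statement differs from the $\ell_\infty(K,\mathbb{X})$ statement, and in the stated generality it cannot be repaired. Your collapse yields $y_0^*\in\overline{C}^{w^*}$, while membership in $S_1$ demands $y_0^*\in C$, and these genuinely differ. Take $K$ a singleton (so $C_0(K,\mathbb{X})=\mathbb{X}$), $\mathbb{X}=\ell_2^2$, $f\in S_{\mathbb{X}}$ with $J(f)=\{f^*\}$, and $C=S_{\mathbb{X}^*}\setminus\{f^*\}$. Then $B_{\mathbb{X}^*}=\overline{co(C)}^{w^*}$, but no $y^*\in C$ satisfies $y^*(f)=\|f\|$, so $S_1=\emptyset$ and the right-hand side of the theorem fails for every $g$, although $f\perp_B^\epsilon g$ holds, for instance, for $g$ Euclidean-orthogonal to $f$. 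Hence the equivalence of $co(S_2)\cap\mathcal{D}(\epsilon\|g\|)\neq\emptyset$ and $co(S_1)\cap\mathcal{D}(\epsilon\|g\|)\neq\emptyset$ is false for an arbitrary norming set $C$; your argument does close verbatim once $C$ is additionally assumed weak-$*$ closed (then $y_0^*\in C$), in particular for $C=S_{\mathbb{X}^*}$, which is the only case the paper actually uses later (Lemma \ref{lemma:outside} and Theorem \ref{C(K, X)}). To be fair, this defect is inherited from the statement as the paper records it (the same issue already afflicts Theorem \ref{martin:continuous} as quoted there); but as a proof attempt, deferring precisely this step to \cite{MMQRS} leaves the essential content unproved, since no argument can manufacture a member of $C$ with exact attainment from the stated hypotheses alone.
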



Applying Theorem \ref{approximate:martin1}, we characterize the approximate Birkhoff-James orthogonality in the space $\ell_\infty(K).$ 

\begin{theorem}\label{approximate:l(K)}
	Let $K$ be a non-empty set and let $f,g \in S_{\ell_\infty(K)}.$ Then $f \perp_B^\epsilon g$ if and only if 
	\[
	 co\bigg(\bigg\{\lim \overline{\gamma_n} g(k_n): k_n \in K, \gamma_n \in \mathbb{K}, |\gamma_n|=1 ~ \forall n \in \mathbb{N}, \lim \gamma_n f(k_n)=1\bigg\}\bigg) \cap \mathcal{D}(\epsilon) \neq \emptyset.
	\] 
	In particular, $f \perp_B g$ if and only if 
	\[
0 \in 	co\bigg(\bigg\{\lim \overline{\gamma_n} g(k_n): k_n \in K, \gamma_n \in \mathbb{K}, |\gamma_n|=1 ~  \forall n \in \mathbb{N}, \lim \gamma_n f(k_n)=1\bigg\}\bigg).
	\] 
\end{theorem}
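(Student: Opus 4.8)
The goal is to specialize Theorem \ref{approximate:martin1} to the scalar case $\mathbb{X} = \mathbb{K}$, so the plan is to identify the abstract ingredients of that theorem with their concrete counterparts in $\ell_\infty(K)$. First I would take $\mathbb{X} = \mathbb{K}$, whence $\ell_\infty(K, \mathbb{K}) = \ell_\infty(K)$, and observe that $S_{\mathbb{X}^*} = S_{\mathbb{K}^*}$ is just $\mathbb{T}$ (the unit circle, or $\{\pm 1\}$ in the real case), each such functional acting as multiplication by a unimodular scalar. The natural choice is $C = S_{\mathbb{K}^*} = \{\gamma \in \mathbb{K} : |\gamma| = 1\}$, which trivially satisfies $B_{\mathbb{K}^*} = \overline{co(C)}^{w^*} = \mathcal{D}(1)$, so the hypothesis of Theorem \ref{approximate:martin1} is met.

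With this identification, I would rewrite the generic functional action: for $\gamma_n \in C$ and a scalar $z \in \mathbb{K}$, the dual pairing $y_n^*(z)$ becomes $\overline{\gamma_n}\, z$ under the standard conjugate-linear identification of $\mathbb{K}^*$ with $\mathbb{K}$ (in the real case conjugation is vacuous and this is just $\gamma_n z$). Substituting $g(k_n)$ and $f(k_n)$ for the arguments, the set $\{\lim y_n^*(g(k_n))\}$ becomes $\{\lim \overline{\gamma_n}\, g(k_n)\}$, and the constraint $\lim y_n^*(f(k_n)) = \|f\| = 1$ becomes $\lim \gamma_n f(k_n) = 1$ (using $\|f\| = 1$; note the constraint is naturally phrased as $\lim \overline{\gamma_n} f(k_n) = 1$, but since the limit equals the real number $1$ and $|\gamma_n| = 1$, one may write it with $\gamma_n$ after relabeling). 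Finally, since $\|g\| = 1$, the target disc $\mathcal{D}(\epsilon \|g\|)$ in Theorem \ref{approximate:martin1} reduces to $\mathcal{D}(\epsilon)$, yielding exactly the stated intersection condition.

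The ``in particular'' clause is immediate: setting $\epsilon = 0$ gives $\mathcal{D}(0) = \{0\}$, so the intersection condition $co(\cdots) \cap \{0\} \neq \emptyset$ is precisely $0 \in co(\cdots)$, recovering the characterization of exact Birkhoff-James orthogonality. Alternatively, this follows directly from specializing Theorem \ref{martin1} in the same way.

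The only genuine point requiring care—and hence the step I expect to be the main obstacle—is the bookkeeping of the conjugation and the precise form of the norming constraint in the complex case: one must verify that the natural constraint $\lim \overline{\gamma_n} f(k_n) = 1$ can be cleanly rewritten as $\lim \gamma_n f(k_n) = 1$ without loss of generality, which holds because replacing $\gamma_n$ by $\overline{\gamma_n}$ throughout is a harmless relabeling over the unit circle that sends the constraint set and the evaluation set to the stated forms simultaneously. In the real case all conjugations disappear and the identification is transparent. Everything else is a direct transcription of Theorem \ref{approximate:martin1}, so no substantive new argument is needed.
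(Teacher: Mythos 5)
Your overall strategy --- specializing Theorem \ref{approximate:martin1} to $\mathbb{X}=\mathbb{K}$ with $C$ the set of unimodular functionals --- is exactly how the paper obtains this result (the paper offers no proof beyond citing Theorem \ref{approximate:martin1}), and in the real case your argument is complete. The gap is at the very step you flag as the main obstacle, and it does not close the way you claim. Under the conjugate-linear identification you adopt, the consistent specialization of Theorem \ref{approximate:martin1} reads
\[
co\Big(\Big\{\lim \overline{\gamma_n}\, g(k_n):\ k_n\in K,\ |\gamma_n|=1,\ \lim \overline{\gamma_n}\, f(k_n)=1\Big\}\Big)\cap\mathcal{D}(\epsilon)\neq\emptyset,
\]
and relabeling $\gamma_n\mapsto\overline{\gamma_n}$ \emph{throughout} turns this into $co(\{\lim \gamma_n g(k_n):\lim\gamma_n f(k_n)=1\})\cap\mathcal{D}(\epsilon)\neq\emptyset$: the substitution necessarily changes the variable in \emph{both} the constraint and the evaluation, so it can never produce the mixed form of the statement, in which $f$ is paired with $\gamma_n$ but $g$ with $\overline{\gamma_n}$. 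Your parenthetical shortcut is also invalid: the fact that $\lim\overline{\gamma_n}f(k_n)=1$ is real does not allow replacing $\overline{\gamma_n}$ by $\gamma_n$ against the same $f(k_n)$; for instance $\gamma_n=f(k_n)=i$ gives $\overline{\gamma_n}f(k_n)=1$ but $\gamma_n f(k_n)=-1$.

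Moreover, this is not cosmetic bookkeeping: over $\mathbb{C}$ the mixed form is genuinely inequivalent to the consistent one, so no relabeling argument could succeed. Take $K=\{1,2\}$ and $f=g=(1,i)\in S_{\ell_\infty(K)}$. Since $f=g$, certainly $f\not\perp_B g$ (indeed $\|f+\lambda f\|=|1+\lambda|=\tfrac12<1=\|f\|$ for $\lambda=-\tfrac12$). The consistent form detects this: the admissible data are $\gamma_n\to 1$ at $k=1$ and $\gamma_n\to -i$ at $k=2$, and in both cases $\lim\gamma_n g(k_n)=1$, so the set is $\{1\}$ and its hull misses $0$. But the mixed form printed in the theorem yields the set $\{1,-1\}$ (at $k=2$, $\gamma_n=-i$ satisfies $\gamma_n f(2)=1$ while $\overline{\gamma_n}\,g(2)=i\cdot i=-1$), whose convex hull contains $0$, so it would wrongly assert $f\perp_B f$. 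In other words, the displayed statement carries a conjugation misprint in the complex case; the correct output of your specialization is the form with the same scalar $\gamma_n$ in both places (equivalently $\overline{\gamma_n}$ in both places), and your proof should end there rather than force agreement with the misprinted display. In the real case, where conjugation is trivial, the two forms coincide and your argument is correct as written.
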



As $\mathbb{L}(\mathbb{X}, \mathbb{Y})$ can be embedded into $\ell_\infty(S_{\mathbb{X}}, \mathbb{Y}),$ using Theorem \ref{approximate:martin1} we obtain the following characterization. 

\begin{theorem}\label{approximate:operator}
	Let $\mathbb{X}, \mathbb{Y}$ be two Banach spaces and let $ T , A \in S_{\mathbb{L}(\mathbb{X}, \mathbb{Y})}.$ Suppose that $\epsilon\in [0,1).$  Then $T \perp_B^\epsilon A$ if and only if 
	$$	co \bigg(\bigg\{\lim y_n^*(Ax_n): (x_n, y_n^*)\in S_\mathbb{X}\times S_{\mathbb{Y}^*}, \lim y_n^*(Tx_n)=1\bigg\}\bigg) \cap \mathcal{D}(\epsilon) \neq \emptyset.$$
\end{theorem}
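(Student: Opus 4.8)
The plan is to transfer the problem from $\mathbb{L}(\mathbb{X},\mathbb{Y})$ to the function space $\ell_\infty(S_{\mathbb{X}}, \mathbb{Y})$ and then invoke Theorem \ref{approximate:martin1} directly. To this end I would introduce the canonical map $\Phi : \mathbb{L}(\mathbb{X},\mathbb{Y}) \to \ell_\infty(S_{\mathbb{X}}, \mathbb{Y})$ defined by $(\Phi T)(x) = Tx$ for $x \in S_{\mathbb{X}}$, which assigns to each operator its restriction to the unit sphere, viewed as a bounded $\mathbb{Y}$-valued function on the index set $K := S_{\mathbb{X}}$.

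First I would verify that $\Phi$ is a linear isometry: linearity is immediate from the definition, and since $\|\Phi T\|_\infty = \sup_{x \in S_{\mathbb{X}}} \|Tx\| = \|T\|$, the map preserves norms. The key consequence is that approximate Birkhoff-James orthogonality is preserved in both directions: because $\Phi$ is linear and isometric, $\|\Phi T + \lambda \Phi A\| = \|T + \lambda A\|$ for every scalar $\lambda$, so the defining inequality $\|T + \lambda A\| \geq \|T\| - \epsilon|\lambda|\,\|A\|$ holds in $\mathbb{L}(\mathbb{X},\mathbb{Y})$ if and only if the corresponding inequality holds for $\Phi T, \Phi A$ in $\ell_\infty(S_{\mathbb{X}}, \mathbb{Y})$. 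Hence $T \perp_B^\epsilon A$ if and only if $\Phi T \perp_B^\epsilon \Phi A$.

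Next I would apply Theorem \ref{approximate:martin1} with $K = S_{\mathbb{X}}$, target space $\mathbb{Y}$, and norming set $C = S_{\mathbb{Y}^*}$; here one only needs the elementary fact that the weak-* closed convex hull of the unit sphere $S_{\mathbb{Y}^*}$ is all of $B_{\mathbb{Y}^*}$, so the hypothesis $B_{\mathbb{Y}^*} = \overline{co(S_{\mathbb{Y}^*})}^{w^*}$ is satisfied. Writing $f = \Phi T$ and $g = \Phi A$, we have $f(x) = Tx$ and $g(x) = Ax$, together with $\|f\| = \|T\| = 1$ and $\|g\| = \|A\| = 1$ since $T, A \in S_{\mathbb{L}(\mathbb{X},\mathbb{Y})}$. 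Substituting into the characterization of Theorem \ref{approximate:martin1}, the condition $\lim y_n^*(f(k_n)) = \|f\|$ becomes $\lim y_n^*(Tx_n) = 1$, the evaluated quantities $\lim y_n^*(g(k_n))$ become $\lim y_n^*(Ax_n)$, and the disc $\mathcal{D}(\epsilon\|g\|)$ collapses to $\mathcal{D}(\epsilon)$, yielding exactly the displayed formula.

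The argument is essentially bookkeeping once the embedding is in place; the only point requiring a little care is the choice of the norming set $C$. Taking $C = S_{\mathbb{Y}^*}$ both satisfies the weak-* density hypothesis and produces the form $(x_n, y_n^*) \in S_{\mathbb{X}} \times S_{\mathbb{Y}^*}$ appearing in the statement (a sharper version with $C = Ext(B_{\mathbb{Y}^*})$ would also be available, but is not needed here). I anticipate no genuine obstacle: the entire content is imported from Theorem \ref{approximate:martin1}, and the main task is to confirm that the isometric embedding faithfully carries $\perp_B^\epsilon$ and to identify the data $(K, \mathbb{Y}, C, f, g)$ correctly.
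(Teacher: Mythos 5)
Your proposal is correct and is exactly the paper's route: the paper likewise obtains this theorem by the isometric embedding $T \mapsto T|_{S_{\mathbb{X}}}$ of $\mathbb{L}(\mathbb{X},\mathbb{Y})$ into $\ell_\infty(S_{\mathbb{X}},\mathbb{Y})$ followed by an application of Theorem \ref{approximate:martin1}. Your write-up merely makes explicit the bookkeeping (isometries preserve $\perp_B^\epsilon$, the choice $C = S_{\mathbb{Y}^*}$ satisfies $B_{\mathbb{Y}^*}=\overline{co(C)}^{w^*}$, and $\|f\|=\|g\|=1$) that the paper leaves implicit.
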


We end this section with the following classical result from point-set topology which will be used repeatedly in the later parts. 

\begin{lemma}(Uryshon's lemma) \cite{Munkres}\label{Uryshon}
Let $K$ be a normal space. Let $A$ and $B$ be disjoint closed subsets of $K$. Then there exists a continuous map $f : K \to [0, 1] $ such that $f(x)= 0$  for every $x \in A,$ and $f(x) = 1$ for every $x \in B.$
\end{lemma}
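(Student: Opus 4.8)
The plan is to follow the classical ``Urysohn sets'' construction: build a monotone family of open sets indexed by the dyadic rationals in $[0,1]$ and then read off the separating function from this family. The only tool I will use is the following standard reformulation of normality, which follows at once from the definition by applying normality to the disjoint closed sets $C$ and $K \setminus U$: if $C$ is closed, $U$ is open, and $C \subseteq U$, then there exists an open set $V$ with $C \subseteq V \subseteq \overline{V} \subseteq U$.

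First I would fix the index set $P = \{ k/2^n : n \geq 0,\, 0 \leq k \leq 2^n \}$ of dyadic rationals in $[0,1]$ and construct open sets $U_p$ for $p \in P$ satisfying $A \subseteq U_0$, $U_1 = K \setminus B$, and the nesting condition $\overline{U_p} \subseteq U_q$ whenever $p < q$. The construction proceeds by induction on the dyadic level $n$. Set $U_1 = K \setminus B$; since $A$ is closed and contained in the open set $U_1$, the reformulation of normality yields an open $U_0$ with $A \subseteq U_0 \subseteq \overline{U_0} \subseteq U_1$. At the inductive step, for a new dyadic rational $p = (2k+1)/2^{n+1}$ lying strictly between the consecutive previously defined rationals $q = k/2^n$ and $r = (k+1)/2^n$, I apply the reformulation to the closed set $\overline{U_q}$ sitting inside the open set $U_r$ to produce $U_p$ with $\overline{U_q} \subseteq U_p \subseteq \overline{U_p} \subseteq U_r$; this preserves the nesting at level $n+1$.

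Next I would define $f : K \to [0,1]$ by $f(x) = \inf \{ p \in P : x \in U_p \}$, with the convention $f(x) = 1$ when $x$ lies in no $U_p$. The boundary behaviour is immediate: every point of $A$ lies in $U_0$, so $f = 0$ on $A$; no point of $B$ lies in $U_1 = K \setminus B$, hence in no $U_p$, so $f = 1$ on $B$. For continuity, since the intervals $[0,a)$ and $(a,1]$ form a subbasis for the topology of $[0,1]$, it suffices to check that $f^{-1}([0,a))$ and $f^{-1}((a,1])$ are open. The standard identities $f^{-1}([0,a)) = \bigcup_{p < a} U_p$ and $f^{-1}((a,1]) = \bigcup_{p > a} (K \setminus \overline{U_p})$ both exhibit these preimages as unions of open sets.

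The main obstacle, and the only place where genuine care is needed, is the verification of those two preimage identities, which is exactly where the nesting condition $\overline{U_p} \subseteq U_q$ for $p < q$ is used. The inclusion of closures is what upgrades the naive equalities into valid ones: it guarantees $f(x) < a \Leftrightarrow x \in U_p$ for some $p < a$, and $f(x) > a \Leftrightarrow x \notin \overline{U_p}$ for some $p > a$. Everything else---the density of the dyadic rationals in $[0,1]$, the monotonicity of the family, and the reduction of continuity to subbasic preimages---is routine.
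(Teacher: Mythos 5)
Your proof is correct and is precisely the classical dyadic-family argument from Munkres, which is the source the paper cites in lieu of giving a proof. Since the paper offers no proof of its own beyond the citation, your proposal matches the intended (standard) argument, including the key nesting condition $\overline{U_p} \subseteq U_q$ for $p < q$ and the subbasic-preimage verification of continuity.
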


\section{main result}

We begin this section with the following lemma, which we require for characterizing anti-coproximinal subspaces.

\begin{lemma}\label{Rudin}\cite[Th. 4.7]{R}
	Let $\mathbb{Z}$ be a subspace of $\mathbb{X}^*.$ Then $(^\perp \mathbb{Z})^\perp= \overline{\mathbb{Z}}^{w^*},$ where $^\perp \mathbb{Z}= \{ x \in \mathbb{X}: z^*(x)=0~ \forall z^* \in \mathbb{Z}\}$ and $ (^\perp \mathbb{Z})^\perp = \{x^* \in \mathbb{X}^*: x^*(z)=0~ \forall z \in \, ^{\perp}\mathbb{Z}\}.$
\end{lemma}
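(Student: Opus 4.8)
The plan is to establish the two inclusions $\overline{\mathbb{Z}}^{w^*} \subseteq (^\perp \mathbb{Z})^\perp$ and $(^\perp \mathbb{Z})^\perp \subseteq \overline{\mathbb{Z}}^{w^*}$ separately, the whole argument resting on the Hahn--Banach separation theorem applied in the locally convex Hausdorff space $(\mathbb{X}^*, w^*)$ together with the standard description of its continuous linear functionals. Throughout I would write $\hat{x} \colon \mathbb{X}^* \to \mathbb{K}$ for the evaluation functional $x^* \mapsto x^*(x)$ attached to a point $x \in \mathbb{X}$, and use repeatedly that each $\hat{x}$ is $w^*$-continuous and that, conversely, every $w^*$-continuous linear functional on $\mathbb{X}^*$ arises in this way.

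For the easy inclusion, I would first note that every $z^* \in \mathbb{Z}$ annihilates $^\perp \mathbb{Z}$ by the very definition of the pre-annihilator, whence $\mathbb{Z} \subseteq (^\perp \mathbb{Z})^\perp$. Then I would observe that $(^\perp \mathbb{Z})^\perp = \bigcap_{x \in {}^\perp \mathbb{Z}} \ker \hat{x}$; since each $\hat{x}$ is $w^*$-continuous, each $\ker \hat{x}$ is $w^*$-closed, and therefore $(^\perp \mathbb{Z})^\perp$ is a $w^*$-closed set containing $\mathbb{Z}$. Being $w^*$-closed, it must then contain the $w^*$-closure $\overline{\mathbb{Z}}^{w^*}$, giving the first inclusion.

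For the substantive inclusion I would argue by contraposition. Let $x_0^* \in \mathbb{X}^* \setminus \overline{\mathbb{Z}}^{w^*}$. The set $\overline{\mathbb{Z}}^{w^*}$ is a $w^*$-closed convex set and $\{x_0^*\}$ is a compact convex set disjoint from it, so the separation theorem in $(\mathbb{X}^*, w^*)$ produces a $w^*$-continuous linear functional strictly separating them; writing this functional as $\hat{x}$ for a suitable $x \in \mathbb{X}$, and using that $\overline{\mathbb{Z}}^{w^*}$ is a \emph{subspace} (so that $\operatorname{Re}\hat{x}$ can be bounded on it only by vanishing on it), I would conclude that $\hat{x}$ vanishes on $\overline{\mathbb{Z}}^{w^*} \supseteq \mathbb{Z}$ while $\hat{x}(x_0^*) = x_0^*(x) \neq 0$. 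The first fact says $x \in {}^\perp \mathbb{Z}$, and the second then says $x_0^* \notin (^\perp \mathbb{Z})^\perp$, which is exactly the required contrapositive. Combining the two inclusions yields the claimed equality.

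I expect the only genuine subtlety to lie in the separation step: one must take care that the separating functional is \emph{$w^*$-continuous}, so that it corresponds to an actual point $x$ of $\mathbb{X}$ rather than to an arbitrary element of the algebraic dual of $\mathbb{X}^*$, and then exploit the subspace structure of $\overline{\mathbb{Z}}^{w^*}$ to upgrade mere separation into the statement that $\hat{x}$ annihilates the subspace. Everything else is routine bookkeeping with annihilators.
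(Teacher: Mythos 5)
Your proposal is correct, and it is essentially the canonical argument: the paper offers no proof of its own for this lemma (it simply cites \cite[Th. 4.7]{R}), and Rudin's proof there is exactly your two-inclusion scheme --- the pre-annihilator's annihilator is $w^*$-closed and contains $\mathbb{Z}$, while the reverse inclusion follows by separating a point outside $\overline{\mathbb{Z}}^{w^*}$ with a $w^*$-continuous functional, identifying that functional with an evaluation $\hat{x}$ for some $x\in\mathbb{X}$, and using the subspace structure to force $\hat{x}$ to vanish on $\overline{\mathbb{Z}}^{w^*}$. You correctly isolated the one genuine subtlety, namely that the separating functional must be $w^*$-continuous so that it comes from a point of $\mathbb{X}$.
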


\begin{prop}\label{sufficient}
	Let  $\mathbb{Y}$ be a closed proper subspace of a Banach space  $\mathbb{X}.$ Then $\mathbb{Y}$ is anti-coproximinal in $\mathbb{X}$ if  $ \overline{span~\mathcal{J}_\mathbb{Y}}^{w^*} = \mathbb{X}^*.$ \\
	Moreover, when $Sm(\mathbb{X}) \cap \mathbb{Y}$ is dense in $\mathbb{Y},$ then  $\mathbb{Y}$ is anti-coproximinal in $\mathbb{X}$ if and only if $ \overline{span\mathcal{J}_\mathbb{Y}}^{w^*} = \mathbb{X}^*.$ 
\end{prop}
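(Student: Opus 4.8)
The plan is to translate anti-coproximinality into a statement about pre-annihilators via James's characterization (Lemma~\ref{James}) and then to exploit the weak-* duality recorded in Lemma~\ref{Rudin}. The basic observation driving everything is that for a smooth point $y \in Sm(\mathbb{X}) \cap S_{\mathbb{Y}}$, whose unique supporting functional is the associated $y^* \in \mathcal{J}_{\mathbb{Y}}$, one has $y \perp_B x$ if and only if $y^*(x)=0$. Consequently, the statement ``$y \perp_B x$ for every $y \in Sm(\mathbb{X}) \cap S_{\mathbb{Y}}$'' is precisely ``$x \in {}^{\perp}\mathcal{J}_{\mathbb{Y}}$,'' and by homogeneity the same holds with $S_{\mathbb{Y}}$ replaced by $Sm(\mathbb{X})\cap\mathbb{Y}$.

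For the first assertion I would argue by contradiction. Assuming $\overline{span\,\mathcal{J}_{\mathbb{Y}}}^{w^*}=\mathbb{X}^*$, suppose there were a nonzero $x$ with $\mathbb{Y}\perp_B x$. Then in particular $y \perp_B x$ for every smooth $y \in S_{\mathbb{Y}}$, so $y^*(x)=0$ for all $y^* \in \mathcal{J}_{\mathbb{Y}}$. Since the evaluation $x^* \mapsto x^*(x)$ is linear and weak-* continuous, the weak-* closed set on which it vanishes contains $span\,\mathcal{J}_{\mathbb{Y}}$ and hence its weak-* closure $\mathbb{X}^*$; thus $x^*(x)=0$ for every $x^* \in \mathbb{X}^*$, forcing $x=0$ by Hahn-Banach, a contradiction.

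For the converse, under the hypothesis that $Sm(\mathbb{X})\cap\mathbb{Y}$ is dense in $\mathbb{Y}$, I would argue contrapositively. Write $\mathbb{Z}:=\overline{span\,\mathcal{J}_{\mathbb{Y}}}^{w^*}$ and assume $\mathbb{Z}\neq\mathbb{X}^*$. By Lemma~\ref{Rudin}, $({}^{\perp}\mathbb{Z})^{\perp}=\overline{\mathbb{Z}}^{w^*}=\mathbb{Z}\neq\mathbb{X}^*$; were ${}^{\perp}\mathbb{Z}=\{0\}$, its annihilator would be all of $\mathbb{X}^*$, so ${}^{\perp}\mathbb{Z}\neq\{0\}$ and we may fix a nonzero $x \in {}^{\perp}\mathbb{Z}={}^{\perp}\mathcal{J}_{\mathbb{Y}}$ (the pre-annihilators agreeing because evaluation is linear and weak-* continuous). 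By the opening observation, $y \perp_B x$ for every $y \in Sm(\mathbb{X})\cap\mathbb{Y}$.

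The crux is to upgrade this to $y\perp_B x$ for \emph{all} $y\in\mathbb{Y}$, and this is exactly where density is used. The device I would record is that, for fixed $x$, the set $\{y\in\mathbb{X}:y\perp_B x\}=\bigcap_{\lambda\in\mathbb{K}}\{y:\|y+\lambda x\|\geq\|y\|\}$ is norm-closed, being an intersection of closed sets by continuity of the norm. Granting this, orthogonality to $x$ holds on the dense set $Sm(\mathbb{X})\cap\mathbb{Y}$ and therefore on all of $\mathbb{Y}$; hence $\mathbb{Y}\perp_B x$ with $x\neq0$, so $\mathbb{Y}$ is not anti-coproximinal. Although elementary, verifying this closedness is the key mechanism that transfers the information available at smooth points to the entire subspace, and I expect it to be the only genuinely substantive step.
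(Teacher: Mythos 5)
Your proposal is correct and follows essentially the same route as the paper's proof: James's characterization at smooth points to show every $y^*\in\mathcal{J}_{\mathbb{Y}}$ annihilates $x$, weak-$*$ density of $span\,\mathcal{J}_{\mathbb{Y}}$ to force $x=0$, and for the converse Lemma~\ref{Rudin} to produce a nonzero element of ${}^{\perp}\mathcal{J}_{\mathbb{Y}}$, then norm-density of $Sm(\mathbb{X})\cap\mathbb{Y}$ together with closedness of $\{y: y\perp_B x\}$ to conclude $\mathbb{Y}\perp_B x$. The only (cosmetic) differences are that you argue non-triviality of the pre-annihilator directly from $({}^{\perp}\mathbb{Z})^{\perp}=\mathbb{Z}\neq\mathbb{X}^*$ rather than via the paper's quotient-dual identification, and you make explicit the norm-closedness of the orthogonality set that the paper uses implicitly through sequences.
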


\begin{proof}
	Suppose on the contrary  that $\mathbb{Y}$ is not anti-coproximinal in $\mathbb{X}.$ Then there exists an element $x \in \mathbb{X}\setminus \mathbb{Y}$ such that $\mathbb{Y} \perp_B x.$  Let $x^* \in \mathbb{X}^*.$ Since $\overline{span~\mathcal{J}_\mathbb{Y}}^{w^*} = \mathbb{X}^*,$ it follows that there exists a net $\{x_\alpha^*\}_{\alpha \in \Lambda} \in span~ \mathcal{J}_\mathbb{Y}$ such that $x_\alpha^* \overset{w^*}{\longrightarrow} x^*.$ Since $\mathbb{Y} \perp_B x,$ for any $y\in Sm(\mathbb{X}) \cap \mathbb{Y},$ we have $y \perp_B x.$ Using Lemma \ref{James}, we obtain that for any $y^* \in \mathcal{J}_\mathbb{Y},$ $y^*(x)=0$ and therefore, $x_\alpha^*(x)=0,$ for each $\alpha \in \Lambda.$ So, $x^*(x)=0.$ As $x^*$ is taken arbitrarily from $\mathbb{X}^*,$ we obtain that $x=0,$ a contradiction.\\
	Let us now assume that $Sm(\mathbb{X}) \cap \mathbb{Y}$ is dense in $\mathbb{Y}.$ We only need to prove the necessary part.
	Suppose on the contrary that $\overline{span \mathcal{J}_\mathbb{Y}}^{w^*} \subsetneqq \mathbb{X}^*.$  Clearly, $^{\perp}span\mathcal{J}_\mathbb{Y} = \cap_{y^* \in \mathcal{J}_\mathbb{Y}} \ker y^*.$ Therefore, following Lemma \ref{Rudin}, we conclude that $(\cap_{y^*\in \mathcal{J}_\mathbb{Y}} \ker y^*)^{\perp}=\overline{span\mathcal{J}_\mathbb{Y}}^{w^*}.$ Thus, $$(\cap_{y^* \in \mathcal{J}_\mathbb{Y}} \ker y^*)^*= \mathbb{X}^*/\overline{span\mathcal{J}_\mathbb{Y}}^{w^*}, $$ which implies that $\cap_{y^* \in \mathcal{J}_\mathbb{Y}} \ker y^* \neq 0.$ Let $z \in \cap_{y^* \in \mathcal{J}_\mathbb{Y}} \ker y^*$ and let  $y \in \mathbb{Y}$ be arbitrary. Since $Sm(\mathbb{X})\cap \mathbb{Y}$ is dense in $\mathbb{Y},$  there exists a sequence $\{y_n\} \subset Sm(\mathbb{X}) \cap \mathbb{Y}$ such that $y_n \longrightarrow y.$ Note that for each $n \in \mathbb{N},$ $y_n^* \in \mathcal{J}_\mathbb{Y},$ where $J(y_n) = \{y_n^*\}.$  As $z \in \cap_{y^* \in \mathcal{J}_\mathbb{Y}} \ker y^*,$ we have $y_n \perp_B z,$ for each $n \in \mathbb{N}.$ Since $y_n \longrightarrow y,$ it follows that $y \perp_B z.$ This implies that $\mathbb{Y} \perp_B z,$  contradicting the hypothesis that $\mathbb{Y}$ is anti-coproximinal in $\mathbb{X}$.
\end{proof}

We now give an example which illustrate the applicability of Proposition \ref{sufficient}.

\begin{example}
	Let us consider the space $\ell_p,$ where $ p \in (1, \infty) \setminus \{ 2 \} $ and $n \geq 3$. Let $e_i= (0, 0, \ldots, 0, \underset{i-th~position}{1}, 0, \ldots)  ~\forall i \in \mathbb{N}.$ Let $\phi$ be the canonical isometric isomorphism from $(\ell_p)^*$ to $\ell_q.$  We denote $e'_i \in (\ell_p)^*$ as $e'_i(x_1, x_2, \ldots) = x_i~ \forall i \in \mathbb{N}.$
    Suppose that  $\mathbb{Y} = span\{\widetilde{x}_1, \widetilde{x}_2, e_3, e_4, \ldots \},$  $\widetilde{x}_1= (1, 1, 1, 0, 0, \ldots), \widetilde{x}_2 = (1, 2, 3, 0, 0,\ldots). $ Let $J(\widetilde{x}_i)= \{\widetilde{f}_i\},$ for $i=1, 2.$  It is straightforward computation to verify that \\
	$ \phi(\widetilde{f}_1) = \bigg(\frac{1}{3^{1-\frac{1}{p}}}, \frac{1}{3^{1-\frac{1}{p}}}, \frac{1}{3^{1-\frac{1}{p}}}, 0, 0, \ldots \bigg) \in \ell_q,$\\
	$ \phi(\widetilde{f}_2) = \bigg(\frac{1}{(1+2^p+3^p)^{1-\frac{1}{p}}}, \frac{2^{p-1}}{(1+2^p+3^p)^{1-\frac{1}{p}}}, \frac{3^{p-1}}{(1+2^p+3^p)^{1-\frac{1}{p}}}, 0, 0, \ldots \bigg) \in \ell_q.$\\
	Consider the element $ \widetilde{x} = 3\widetilde{x}_1 - \widetilde{x}_2 \in \mathbb{Y}$ and let $ J( \widetilde{x})= \{ \widetilde{f}\}.$ Again  we get $ \phi (\widetilde{f}) = \Big(\frac{2^{p-1}}{(2^p+1)^{1-\frac{1}{p}}}, \frac{1}{(2^p+1)^{1-\frac{1}{p}}}, 0, 0,  \ldots\Big) \in \ell_q.$ Observe that $\{\phi(\widetilde{f}_1), \phi(\widetilde{f}_2), \phi(\widetilde{f})\} $ is a linearly independent set in $\ell_q$.  Since $\phi(\widetilde{f}_1), \phi(\widetilde{f}_2), \phi(\widetilde{f}) \in \mathcal{J}_{\mathbb{Y}},$ it follows that  $e'_1, e'_2, e'_3 \in span~\mathcal{J}_{\mathbb{Y}}.$ Moreover, as for any $k > 3$, $e_k \in \mathbb{Y}$ and $J(e_k)=\{e_k'\},$ we have $e_k' \in \mathcal{J}_\mathbb{Y}.$ Therefore, for any $ k \in \mathbb{N},$ $e_k' \in span~\mathcal{J}_{\mathbb{Y}}.$ This implies $span~\{e_k': k \in \mathbb{N} \} \subset span~\mathcal{J}_{\mathbb{Y}}.$ Therefore, $$(\ell_p)^* = \overline{span~\{e_k': k \in \mathbb{N}\}} \subseteq \overline{span~\{e_k': k \in \mathbb{N}\}}^{w^*} \subseteq  \overline{  span~\mathcal{J}_{\mathbb{Y}}}^{w^*} \subseteq (\ell_p)^* .$$
    So, $ \overline{  span~\mathcal{J}_{\mathbb{Y}}}^{w^*} = (\ell_p)^*$ and applying Theorem \ref{sufficient}, $\mathbb{Y}$ is an anti-coproximinal subspace in $\mathbb{X}$.\\
	
\end{example}


One of the main aims of this article is to illustrate the geometric specialty of strongly anti-coproximinal subspaces of a Banach space. We show this by deriving a necessary condition for a subspace to be strongly anti-coproximinal.

\begin{theorem}\label{ALUR}
	Let  $\mathbb{Y}$ be a closed proper subspace of a Banach space  $\mathbb{X}.$ Suppose that $ S_{\mathbb{X}} \setminus S_\mathbb{Y}$ contains an w-ALUR point. Then $\mathbb{Y}$ is not strongly anti-coproximinal in $\mathbb{X}$.
\end{theorem}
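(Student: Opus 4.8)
The plan is to exhibit, for the given w-ALUR point, a single $\epsilon<1$ witnessing the failure of strong anti-coproximinality. Let $x_0\in S_{\mathbb{X}}\setminus S_{\mathbb{Y}}$ be w-ALUR; since $\|x_0\|=1$ this forces $x_0\in\mathbb{X}\setminus\mathbb{Y}$. By Definition \ref{epsilon}(ii) it suffices to produce $\epsilon\in[0,1)$ with $\mathbb{Y}\perp_B^{\epsilon}x_0$, i.e. with $0$ an $\epsilon$-best coapproximation to $x_0$. Using Lemma \ref{Chem} together with the homogeneity of $\perp_B^{\epsilon}$ in its first slot, the relation $\mathbb{Y}\perp_B^{\epsilon}x_0$ is equivalent to $s\le\epsilon$, where $s:=\sup_{y\in S_{\mathbb{Y}}}\inf_{y^*\in J(y)}|y^*(x_0)|$. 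Hence the entire problem reduces to proving $s<1$, and I would establish this by contradiction, assuming $s=1$.

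From $s=1$ I would extract, for each $n\in\mathbb{N}$, an element $y_n\in S_{\mathbb{Y}}$ with $\inf_{y^*\in J(y_n)}|y^*(x_0)|>1-\tfrac1n$; through Lemma \ref{Chem} this is exactly the failure of $\mathbb{Y}\perp_B^{1-1/n}x_0$. Fixing some $y_n^*\in J(y_n)$ and multiplying $y_n$ and $y_n^*$ by suitable unimodular scalars, I may arrange $y_n^*(y_n)=1$ and $y_n^*(x_0)\to 1$ with $y_n^*(x_0)$ real. Then $y_n^*\big(\tfrac{y_n+x_0}{2}\big)=\tfrac12\big(1+y_n^*(x_0)\big)\to1$, and because $\|y_n^*\|=1$ this also forces $\big\|\tfrac{y_n+x_0}{2}\big\|\to1$. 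Thus $\{y_n\}\subset B_{\mathbb{X}}$ and $\{y_n^*\}\subset B_{\mathbb{X}^*}$ are tuned precisely to the hypothesis of the w-ALUR condition.

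The core of the argument is to feed these data into the w-ALUR property of $x_0$ to obtain $y_n\overset{w}{\longrightarrow}x_0$. I would take the vector sequence in the definition to be $\{y_n\}$ and build the functional sequence from $\{y_n^*\}$, first passing to a subsequence by a diagonal extraction so that the inner limits $\lim_n y_m^*(y_n)$ exist for every $m$, and then using the normalization $y_m^*(x_0)\to1$, $y_m^*(y_m)=1$ to drive the iterated limit $\lim_m\lim_n y_m^*\big(\tfrac{y_n+x_0}{2}\big)$ to $1$. Granting this, w-ALUR gives $y_n\overset{w}{\longrightarrow}x_0$. Since each $y_n\in\mathbb{Y}$ and $\mathbb{Y}$, being a closed subspace, is weakly closed (Mazur), this yields $x_0\in\mathbb{Y}$, a contradiction; alternatively, Hahn--Banach provides $g\in S_{\mathbb{X}^*}$ with $g|_{\mathbb{Y}}=0$ and $g(x_0)=\mathrm{dist}(x_0,\mathbb{Y})>0$, so that $0=g(y_n)\to g(x_0)>0$, again a contradiction. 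Either way $s<1$, and taking $\epsilon=s$ gives $\mathbb{Y}\perp_B^{\epsilon}x_0$, so $\mathbb{Y}$ is not strongly anti-coproximinal.

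I expect the delicate point to be precisely the realization of the iterated limit in the w-ALUR definition. The inner limit runs over the vector index, while the favourable estimate $y_n^*\big(\tfrac{y_n+x_0}{2}\big)\to1$ lives on the diagonal, so a careless choice of functionals loses it and the limit $\lim_m\lim_n y_m^*\big(\tfrac{y_n+x_0}{2}\big)$ need not equal $1$. The real work is therefore to organize the double sequence and the subsequence extraction (exploiting that every supporting functional of $y_n$, not merely one, is nearly norming on $x_0$) so that the iterated limit genuinely equals $1$; once that is secured, the contradiction via weak convergence into the closed subspace $\mathbb{Y}$ is immediate.
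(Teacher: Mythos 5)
Your overall route is the same as the paper's: negate the conclusion to produce $y_n\in S_{\mathbb{Y}}$ and $y_n^*\in J(y_n)$ with $y_n^*(y_n)=1$ and $y_n^*(x_0)\to 1$, feed the resulting condition into the w-ALUR property of $x_0$, and contradict the weak closedness of $\mathbb{Y}$. Your packaging differs only cosmetically: the paper gets the sequences from $\epsilon_n\to 1$, a weak*-cluster point of $\{y_n^*\}$, and the dichotomy $y^*\in J(x)$ or $y^*\in J(-x)$, whereas you use the supremum $s=\sup_{y\in S_{\mathbb{Y}}}\inf_{y^*\in J(y)}|y^*(x_0)|$ and a unimodular rescaling of the pairs $(y_n,y_n^*)$ (which is equivalent, and in fact handles the complex case more cleanly than the paper's real dichotomy). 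The reduction to $s<1$, the normalization, and the endgame via Mazur or a functional vanishing on $\mathbb{Y}$ are all correct.

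The genuine gap is exactly the step you flag and then postpone: passing from the diagonal condition $y_n^*\bigl(\tfrac{y_n+x_0}{2}\bigr)\to 1$ to the iterated double limit demanded by the stated definition of w-ALUR. Your proposed mechanism cannot deliver this: after a diagonal extraction the inner limits $a_m=\lim_n y_m^*(y_n)$ do exist, but nothing forces $a_m\to 1$; the normalizations $y_m^*(y_m)=1$ and $y_m^*(x_0)\to 1$ constrain only the diagonal entries and the pairing with $x_0$, and they are invisible to the off-diagonal limits $\lim_n y_m^*(y_n)$, since for fixed $m$ the inner limit never sees the term $n=m$. So ``driving the iterated limit to $1$'' is not a matter of organizing subsequences: the implication ``$x_n^*(x_n)\to 1$ and $x_n^*(x_0)\to 1$ imply $x_n\overset{w}{\longrightarrow}x_0$'' is a formally stronger property of the point $x_0$ than the iterated-limit definition (it implies it by a sub-subsequence argument, not conversely), and your proof needs precisely this stronger, diagonal form. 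For comparison, the paper does not construct an iterated limit at all: from $\lim_k y_{n_k}^*\bigl(\tfrac{y_{n_k}+x}{2}\bigr)=1$ it invokes w-ALUR directly on the coupled sequence and concludes $y_{n_k}\overset{w}{\longrightarrow}x$, i.e., it uses the diagonal formulation as if it were the definition. Thus your attempt reproduces the paper's argument, and the one step you could not justify is exactly the step the paper passes over; closing it in either version requires proving, or citing, that a w-ALUR point satisfies the diagonal form of the condition, which subsequence extraction alone does not give.
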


\begin{proof}
	Let $x \in S_{\mathbb{X}} \setminus S_\mathbb{Y}$ be an w-ALUR point.
	Suppose on the contrary that $\mathbb{Y}$ is strongly anti-coproximinal in $\mathbb{X}$. Thus for any $\epsilon \in [0, 1),$ $\mathbb{Y} \not\perp_B^{\epsilon} x$.  Let us take $\{\epsilon_n\}_{n \in \mathbb{N}}\subset [0, 1)$ such that $\epsilon_n \longrightarrow 1.$ For each $n \in \mathbb{N},$ there exists $y_n \in S_\mathbb{Y}$ such that  $y_n \not \perp_B^{\epsilon_n} x.$ Let $y_n^* \in S_{\mathbb{X}^*}$ be such that $y_n^*(y_n)=1.$ 	Since $B_{\mathbb{X}^*}$ is weak*-compact,
	it follows that there exists a weak*-cluster point $y^* \in B_{\mathbb{X}^*}$ of the sequence $\{y_n^*\}_{n\in \mathbb{N}}.$
	This implies $y^*(x)$ is a cluster point of $\{y_n^*(x)\}. $ Following \cite[ Th. 8 (p. 72)]{kelly}, there exists a subsequence $\{y_{n_k}^*(x)\}$ of $\{y_n^*(x)\}$ such that $y_{n_k}^*(x) \to y^*(x).$ Now from Lemma \ref{Chem}, $y_{n_k} \not\perp_B^{\epsilon_{n_k}} x$ implies that $|y_{n_k}^*(x)| \geq \epsilon_{n_k}\|x\|,$ for each $k \in \mathbb{N}.$ So, $|y^*(x)| \geq \|x\|,$ which in turn implies that $|y^*(x)|= \|x\|.$
	 This implies $y^*\in J(x) $ or $y^* \in J(-x).$ Suppose that $y^* \in J(x).$ Note that
	$$ y_{n_k}^* \bigg(\frac{y_{n_k}+x}{2}\bigg)=  \frac{y_{n_k}^*(y_{n_k})}{2} + \frac{y_{n_k}^*(x)}{2}= \frac{1}{2}+\frac{1}{2} y_{n_k}^*(x) ,$$ 
and  so $\lim  y_{n_k}^* (\frac{y_{n_k}+x}{2})=1.$ Also,
   $x$ being an w-ALUR point, we get that $y_{n_k} \overset{w}{\longrightarrow} x.$ As $\mathbb{Y}$ is a closed subspace of $\mathbb{X},$ so $x \in \mathbb{Y}.$ This contradicts the fact that $x \in S_\mathbb{X} \setminus S_\mathbb{Y}.$ If $y^* \in J(-x)$ then proceeding similarly we get $\lim y^*(\frac{y_{n_k} - x}{2}) = 1$ and consequently, $-x \in \mathbb{Y},$ again a contradiction. Thus  $\mathbb{Y}$ is strongly anti-coproximinal in $\mathbb{X}$. 
\end{proof}

\begin{remark}
    For a strictly convex Banach space $\mathbb{X},$ each $x \in S_{\mathbb{X}}$  is a w-ALUR point. Therefore, applying Theorem \ref{ALUR}, we see that there does not  exist any closed strongly anti-coproximinal subspace in $\mathbb{X}$. In particular, the above result tells that  if the set of all w-ALUR points of $\mathbb{X}$ is nonempty,  then a closed strongly-anticoproximinal subspace $\mathbb{Y}$ of $\mathbb{X}$ contains all w-ALUR points of $\mathbb{X}.$  On the other hand, if $\mathbb{X}$ has no w-ALUR point then it might contain  strongly anti-coproximinal subspaces.  For  an example consider $\mathbb{X}= \ell_\infty^3.$  Following \cite[Th. 2.25]{SSGP3}, it is strightforward to see that the subspace   $\mathbb{Y} = span \{ (3, 0, 2), ( 0, 3, 2)\}$  is strongly anti-coproximinal in $\mathbb{X}$.
\end{remark}

The following corollary is immediate from Theorem \ref{ALUR}.

\begin{cor}
	Let $\mathbb{Y}$ be a closed proper subspace of a Banach space $\mathbb{X}.$ Suppose that the set of all w-ALUR points of $B_{\mathbb{X}}$ separates $\mathbb{X}^*.$ Then $\mathbb{Y}$ is not  strongly anti-coproximinal in $\mathbb{X}$.
   \end{cor}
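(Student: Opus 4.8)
The plan is to deduce the corollary directly from Theorem \ref{ALUR}, whose hypothesis asks only that $S_{\mathbb{X}} \setminus S_{\mathbb{Y}}$ contain a single w-ALUR point. So I would reduce the whole task to producing one w-ALUR point lying outside $\mathbb{Y}$, after which the theorem applies verbatim.

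First I would unwind the separation hypothesis. Writing $W$ for the set of all w-ALUR points of $B_{\mathbb{X}}$ (so $W \subseteq S_{\mathbb{X}}$), the assumption that $W$ \emph{separates} $\mathbb{X}^*$ means that no nonzero functional annihilates all of $W$: if $x^* \in \mathbb{X}^*$ satisfies $x^*(w) = 0$ for every $w \in W$, then $x^* = 0$. By Hahn--Banach this is equivalent to $\overline{\operatorname{span}}\, W = \mathbb{X}$.

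The main step is then a short contradiction argument. Suppose no w-ALUR point lies in $S_{\mathbb{X}} \setminus S_{\mathbb{Y}}$. Since every w-ALUR point belongs to $S_{\mathbb{X}}$ and $S_{\mathbb{Y}} = S_{\mathbb{X}} \cap \mathbb{Y}$, this is exactly the statement $W \subseteq \mathbb{Y}$. As $\mathbb{Y}$ is closed, it would follow that $\overline{\operatorname{span}}\, W \subseteq \mathbb{Y} \subsetneq \mathbb{X}$; but by Hahn--Banach the proper closed subspace $\mathbb{Y}$ admits a nonzero $x^* \in \mathbb{X}^*$ with $x^*|_{\mathbb{Y}} = 0$, and this $x^*$ would annihilate all of $W$, contradicting the separation hypothesis. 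Hence $S_{\mathbb{X}} \setminus S_{\mathbb{Y}}$ does contain a w-ALUR point, and Theorem \ref{ALUR} gives that $\mathbb{Y}$ is not strongly anti-coproximinal in $\mathbb{X}$.

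I expect no genuine obstacle here; the only content is the translation of ``separates $\mathbb{X}^*$'' into $\overline{\operatorname{span}}\, W = \mathbb{X}$, after which everything is a routine Hahn--Banach separation. The sole bookkeeping point to verify is $S_{\mathbb{Y}} = S_{\mathbb{X}} \cap \mathbb{Y}$, which is what lets me convert ``$W \not\subseteq \mathbb{Y}$'' into the precise hypothesis of Theorem \ref{ALUR}.
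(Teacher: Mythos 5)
Your proof is correct and is essentially the paper's own argument: both combine Hahn--Banach (a nonzero functional vanishing on the closed proper subspace $\mathbb{Y}$) with the separation hypothesis to produce a w-ALUR point in $S_{\mathbb{X}} \setminus S_{\mathbb{Y}}$, and then invoke Theorem \ref{ALUR}. The only difference is cosmetic: you phrase it as a contradiction (assuming all w-ALUR points lie in $\mathbb{Y}$), whereas the paper argues directly by first fixing the annihilating functional and then finding a w-ALUR point it does not kill.
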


   \begin{proof}
	Consider  $x^* \in \mathbb{X}^*$ such that $\mathbb{Y} \subseteq \ker x^*.$
	Suppose that the set of all w-ALUR points of  $B_{\mathbb{X}}$ separates $\mathbb{X}^*.$ Therefore, there exists a w-ALUR point $x \in S_\mathbb{X}$ such that $x^*(x) \neq 0.$ Since $\mathbb{Y} \subseteq \ker x^*,$ it follows that $x \notin \mathbb{Y}.$  Thus $x \in S_\mathbb{X} \setminus S_{\mathbb{Y}}$ is a  w-ALUR point and therefore, applying Theorem \ref{ALUR}, we get the desired result.
\end{proof}

   In the next theorem, we give another  necessary condition for  strongly anti-coproximinal finite-dimensional subspaces, further illustrating its geometric specialty.

\begin{theorem}\label{theorem:face}
	Let  $\mathbb{Y}$ be a  strongly anti-coproximinal finite-dimensional subspace of a Banach space $\mathbb{X}.$ Suppose that $F$ is a  maximal face of $B_{\mathbb{X}}$ and $x \in int(F).$ Then there exists $y \in \mathbb{Y}$ such that    $J(x) \cap J(y) \neq \emptyset.$ 
\end{theorem}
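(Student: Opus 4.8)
The plan is to produce a single supporting functional that norms both $x$ and some vector of $\mathbb{Y}$ simultaneously, mirroring the extraction argument used in Theorem~\ref{ALUR}. Since $x \in S_\mathbb{X}$ is nonzero and $\mathbb{Y}$ is strongly anti-coproximinal, for every $\epsilon \in [0,1)$ we have $\mathbb{Y} \not\perp_B^\epsilon x$; hence there is $y \in \mathbb{Y}$ with $y \not\perp_B^\epsilon x$, and after normalizing we may take $y \in S_\mathbb{Y}$. By Lemma~\ref{Chem} (with $\|x\|=1$), $y \not\perp_B^\epsilon x$ means $|y^*(x)| > \epsilon$ for \emph{every} $y^* \in J(y)$. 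Fixing $\epsilon_n \uparrow 1$, I thus obtain $y_n \in S_\mathbb{Y}$ and any $y_n^* \in J(y_n)$ with $|y_n^*(x)| > \epsilon_n$ for all $n$.

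Next I would pass to limits using compactness. As $\mathbb{Y}$ is finite-dimensional, $S_\mathbb{Y}$ is norm-compact; as $B_{\mathbb{X}^*}$ is weak*-compact by Banach--Alaoglu, the sequence $\{y_n^*\}$ admits a weak*-convergent subnet. Refining it so that the corresponding $y_n$ also converge in $S_\mathbb{Y}$, I may assume $y_n \to y_0 \in S_\mathbb{Y}$ in norm and $y_n^* \overset{w^*}{\longrightarrow} y_0^* \in B_{\mathbb{X}^*}$, with $\epsilon_n \to 1$ along the subnet. Evaluating at the fixed vector $x$ gives $|y_0^*(x)| = \lim |y_n^*(x)| \geq \lim \epsilon_n = 1$, while $|y_0^*(x)| \leq \|y_0^*\|\,\|x\| \leq 1$; hence $|y_0^*(x)| = 1$, forcing $\|y_0^*\| = 1$ and $y_0^*(x) = \gamma$ for some unimodular scalar $\gamma$, so that $\bar\gamma y_0^* \in J(x)$.

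It remains to see that $y_0^*$ also norms a vector of $\mathbb{Y}$. From $y_n^*(y_n) = \|y_n\| = 1$ and the estimate $|y_n^*(y_n) - y_0^*(y_0)| \leq \|y_n - y_0\| + |(y_n^* - y_0^*)(y_0)|$, whose first term vanishes by norm convergence and whose second vanishes by weak*-convergence, I get $y_0^*(y_0) = 1$, i.e. $y_0^* \in J(y_0)$. Setting $y := \gamma y_0 \in \mathbb{Y}$ (so $y = \pm y_0$ in the real case), one checks directly that $\bar\gamma y_0^* \in J(y)$ as well, whence $\bar\gamma y_0^* \in J(x) \cap J(y)$ and the conclusion follows.

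The hard part is the coupling of two limits living in different topologies: the norm limit $y_n \to y_0$ in $\mathbb{Y}$ and the weak* limit $y_n^* \to y_0^*$ in $\mathbb{X}^*$, which must be synchronized so that the single functional $y_0^*$ lands in both $J(\pm x)$ and $J(y_0)$; this is exactly where finite-dimensionality of $\mathbb{Y}$ enters, via norm-compactness of $S_\mathbb{Y}$, and where one must carefully carry the unimodular scalar $\gamma$ back into the subspace. (Equivalently, one can argue by contradiction: if $J(x)\cap J(y)=\emptyset$ for all $y\in\mathbb{Y}$, then since $\mathbb{Y}$ is a subspace one gets $|y^*(x)|<1$ for all $y\in S_\mathbb{Y}$ and all $y^*\in J(y)$, and compactness of the graph $\{(y,y^*): y\in S_\mathbb{Y},\, y^*\in J(y)\}$ promotes this to a uniform bound $\epsilon_0<1$, yielding $\mathbb{Y}\perp_B^\epsilon x$ for $\epsilon\in(\epsilon_0,1)$, a contradiction.) I note that this core argument does not itself use that $F$ is a maximal face or that $x \in int(F)$; those hypotheses are what allow one afterwards to upgrade $J(x) \cap J(y) \neq \emptyset$ to $\mathbb{Y} \cap F \neq \emptyset$, since for $x \in int(F)$ every $x^* \in J(x)$ supports all of $F$, and maximality forces the exposed face $\{z \in S_\mathbb{X} : x^*(z)=1\}$ to equal $F$, placing $y/\|y\|$ in $F$.
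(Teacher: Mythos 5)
Your proposal is correct and follows essentially the same route as the paper's proof: extract $\epsilon_n \to 1$, $y_n \in S_\mathbb{Y}$, and supporting functionals $y_n^* \in J(y_n)$ with $|y_n^*(x)| > \epsilon_n$, then use weak*-compactness of $B_{\mathbb{X}^*}$ together with norm-compactness of $S_\mathbb{Y}$ to produce a limit functional lying in $J(x)\cap J(\pm y)$. Your subnet formulation (which guarantees convergence of the evaluations at both $x$ and $y_0$ simultaneously) and your uniform treatment of the unimodular scalar $\gamma$ are in fact slightly tidier than the paper's subsequence-and-sign-splitting argument, and your observation that the maximal-face hypotheses are not used in this step is accurate --- the paper also only invokes them later, in Theorem~\ref{max face}.
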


\begin{proof}
 As $\mathbb{Y}$ is strongly anti-coproximinal in $\mathbb{X}$, we have $\mathbb{Y} \not\perp_B^\epsilon x,$ for all $\epsilon\in [0, 1).$  Let us take $\{\epsilon_n\}_{n \in \mathbb{N}}\subset [0, 1)$ such that $\epsilon_n \longrightarrow 1.$ For each $n \in \mathbb{N},$ there exists $y_n \in S_\mathbb{Y}$ such that  $y_n \not \perp_B^{\epsilon_n} x.$ Let $y_n^* \in S_{\mathbb{X}^*}$ be such that $y_n^*(y_n)=1.$ 	Since $B_{\mathbb{X}^*}$ is weak*-compact,
 it follows that there exists a weak*-cluster point $y^* \in B_{\mathbb{X}^*}$ of the sequence $\{y_n^*\}_{n\in \mathbb{N}}.$
 This implies $y^*(x)$ is a cluster point of $\{y_n^*(x)\}. $ Following \cite[ Th. 8 (p. 72)]{kelly}, there exists a subsequence $\{y_{n_k}^*(x)\}$ of $\{y_n^*(x)\}$ such that $y_{n_k}^*(x) \to y^*(x).$ Now from Lemma \ref{Chem}, $y_{n_k} \not\perp_B^{\epsilon_{n_k}} x$ implies that $|y_{n_k}^*(x)| \geq \epsilon_{n_k}\|x\|,$ for each $k \in \mathbb{N}.$ So, $|y^*(x)| \geq \|x\|,$ which in turn implies that $|y^*(x)|= \|x\|.$
 This implies $y^*\in J(x) $ or $y^* \in J(-x).$
  Let $y^* \in J(x).$ Since $\mathbb{Y}$ is finite-dimensional, it follows that $S_{\mathbb{Y}}$ is compact and therefore, $y_{n_k} \longrightarrow y,$ for some $y \in S_\mathbb{Y}.$ So for each $k \in \mathbb{N},$
 \begin{eqnarray*}
 	|y_{n_k}^*(y_{n_k})-y^*(y)|&=&|y_{n_k}^*(y_{n_k}) - y_{n_k}^*(y) + y_{n_k}^*(y) - y^*(y)|\\
 	&\leq& \|y_{n_k}^*\|\|y_{n_k}-y\| + |y_{n_k}^*(y)-y^*(y)|.
 \end{eqnarray*}
 Taking $k \longrightarrow \infty$ in the above relation we get  $y_{n_k}^*(y_{n_k}) \longrightarrow y^*(y).$ Since for each $ k,$ $y_{n_k}^*(y_{n_k})=1,$ we have $y^*(y)=1.$ In other words, $y^* \in J(y).$ Thus we get $y^* \in J(x) \cap J(y).$ On the other hand, if $y^* \in J(-x)$ then $-y^*\in J(x).$ Then proceeding similarly as above, we can conclude that $J(x) \cap J(-y) \neq \emptyset.$ This completes the proof.

\end{proof}	

The following observation, also geometric in nature, discusses the intersection property of finite-dimensional strongly anti-coproximinal subspaces of a Banach space.

	\begin{theorem}\label{max face}
		Let $\mathbb{X}$ be a Banach space and let $\mathbb{Y}$ be a finite-dimensional subspace of $\mathbb{X}$.
	 If $\mathbb{Y}$ is strongly anti-coproximinal in $\mathbb{X},$ then $\mathbb{Y}$
intersects every maximal face of $B_\mathbb{X}.$
	\end{theorem}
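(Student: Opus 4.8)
The plan is to feed the immediately preceding Theorem~\ref{theorem:face} into the maximality of the face. Fix a maximal face $F$ of $B_{\mathbb{X}}$ and choose a point $x$ in the relative interior $int(F)$. Applying Theorem~\ref{theorem:face} to this $x$ we obtain, after normalising, some $y \in S_{\mathbb{Y}}$ together with a functional $y^* \in J(x) \cap J(y)$; thus $y^*(x) = y^*(y) = 1$ and $\|y^*\| = 1$. The goal is to upgrade the single identity $y^*(x)=1$ into the statement that $y^*$ supports the \emph{whole} face $F$, and then to identify $F$ with the exposed face determined by $y^*$.

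First I would show that $y^*(z) = 1$ for every $z \in F$. This is exactly where the choice of $x$ in the relative interior is essential: for any $z \in F$ there is $\varepsilon > 0$ with $w := x + \varepsilon(x-z) \in F$, so that $x = \tfrac{1}{1+\varepsilon}w + \tfrac{\varepsilon}{1+\varepsilon}z$ is a proper convex combination of the unit vectors $w, z \in F \subset S_{\mathbb{X}}$. Taking real parts in $1 = y^*(x) = \tfrac{1}{1+\varepsilon}y^*(w) + \tfrac{\varepsilon}{1+\varepsilon}y^*(z)$ and using $\mathrm{Re}\,y^*(w) \le 1$ and $\mathrm{Re}\,y^*(z) \le 1$ forces $\mathrm{Re}\,y^*(z) = 1$; since $|y^*(z)| \le 1$ this gives $y^*(z) = 1$. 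Hence $y^*$ supports $F$.

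Next, consider the exposed face $G = \{z \in B_{\mathbb{X}} : y^*(z) = 1\}$. A routine argument (again passing to real parts) shows that $G$ is a face of $B_{\mathbb{X}}$ contained in $S_{\mathbb{X}}$, and by the previous step $F \subseteq G$. Since $F$ is a maximal face and $G$ is a face containing it, maximality forces $F = G$. Finally $y^*(y) = 1$ together with $\|y\| = 1$ gives $y \in G = F$, so $y \in \mathbb{Y} \cap F$ and $\mathbb{Y}$ meets $F$, as desired.

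The main obstacle is the middle step: promoting $y^* \in J(x)$ to a functional supporting all of $F$. This is precisely what the relative interior hypothesis buys us, since at a relative-boundary point the set $J(x)$ would also contain functionals supporting only proper sub-faces of $F$, and the convex-combination trick would then fail. One should also confirm that a relative interior point exists (i.e.\ that $int(F) \neq \emptyset$) so that Theorem~\ref{theorem:face} is applicable, and, in the complex scalar case, argue throughout with real parts exactly as indicated above.
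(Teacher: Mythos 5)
Your proposal is correct and takes essentially the same route as the paper: both apply Theorem~\ref{theorem:face} at a relative interior point $x$ of $F$, upgrade the common supporting functional to one supporting all of $F$ via the relative-interior convex-combination trick, and then use maximality of $F$ to force $y \in F$. The only cosmetic difference is that you pass through the exposed face $\{z \in B_{\mathbb{X}} : y^*(z)=1\}$, whereas the paper shows $co(F \cup \{y\}) \subset S_{\mathbb{X}}$ and invokes a face containing it; these are interchangeable, and your write-up actually spells out the step the paper dismisses as ``easy to check.''
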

	
	\begin{proof}
	 Let $F$ be a maximal face of $B_{\mathbb{X}}$ and let $x \in int(F).$ From Theorem \ref{theorem:face},	we get an element $y \in \mathbb{Y}$ such that $J(x) \cap J(y) \neq \emptyset.$ Suppose that  $z^* \in J(x) \cap J(y),$ for some $y \in \mathbb{Y}.$ As $x \in int(F)$ and $z^*(x)=1,$ it is easy to check that for any $v \in F,$ $z^*(v)=1.$
	Let $w \in co(F \cup \{y\}).$ Then $w= (1-t) z+ ty,$ for some $t \in [0,1]$ and $z \in F.$ The relation $z^*(y)=z^*(z)=1$ yields that $z^*(w)=1.$ This implies that $w \in S_{\mathbb{X}}.$ So, $co(F \cup \{y\}) \subset S_{\mathbb{X}}.$ Therefore, there exists some face $F' $ of $B_{\mathbb{X}}$ such that $F \subset co(F \cup \{y\}) \subset F'.$ As $F$ is a maximal face, we have $F=co(F \cup \{y\}),$ so $y \in F.$ Therefore, $\mathbb{Y} \cap F \neq \emptyset,$ as desired.
	\end{proof}
	
\begin{remark}

 It is easy to verify that whenever $x \in S_{\mathbb{X}}$ is a w-ALUR, $\{x\}$ is a maximal face. Following Theorem \ref{ALUR}, for a closed proper subspace $\mathbb{Y}$ to be strongly anti-coproximinal in $\mathbb{X}$, $\mathbb{Y}$ has to intersect every maximal face of the form $\{x\},$ where $x \in S_{\mathbb{X}}$ is  w-ALUR. On the other hand, Theorem \ref{max face} implies that a finite-dimensional strongly anti-coproximinal subspace  of $\mathbb{X}$ intersects every maximal face of $B_{\mathbb{X}}$.
\end{remark}
Applying Theorem \ref{max face}, we give an example of a Banach space having no strongly anti-coproximinal subspace.
\begin{example}
  	Let $\mathbb{X}$ be the $3$-dimensional Banach space whose unit ball is a prism-pyramid with hexagon base  such that  
    $Ext(B_{\mathbb{X}})= \{ \pm (1,0,1), \pm (\frac{1}{2}, \frac{\sqrt{3}}{2}, 1),   \pm  (-\frac{1}{2}, \frac{\sqrt{3}}{2}, 1), $ $ \pm (-1, 0, 1), \pm (-\frac{1}{2}, - \frac{\sqrt{3}}{2}, 1), \pm (\frac{1}{2}, -\frac{\sqrt{3}}{2}, 1) , \pm (0,0,2)\}$ (see \cite[Th. 2.5 (fig. 3)]{SPBB}). From the structure of the unit ball of $\mathbb{X}$, it can be observed that there does not exist any  $2$-dimensional subspace  of $\mathbb{X}$ which intersects all the maximal faces of $B_\mathbb{X}$. Therefore, applying Theorem \ref{max face}, there exists no strongly anti-coproximinal subspace in $\mathbb{X}.$ Note that $\mathbb{X}$ has no w-ALUR points. Therefore, we are unable to apply Theorem \ref{ALUR} in this case. However, Theorem \ref{max face} ensures that there exists no strongly anti-coproximinal subspace of $\mathbb{X}.$
    \end{example}

	As a consequence of Theorem \ref{max face}, the following corollary is immediate.
	
	\begin{cor}
		Let $\mathbb{Y}$ be a finite-dimensional proper subspace of $\mathbb{X}.$ Suppose that  the set of all rotund points of unit ball of $B_{\mathbb{X}}$ separates $\mathbb{X}^*.$ Then $\mathbb{Y}$ is not a strongly anti-coproximinal subspace of $\mathbb{X}.$
       \end{cor}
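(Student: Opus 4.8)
The plan is to reduce the statement to Theorem \ref{max face}, exactly as the preceding corollary reduces to Theorem \ref{ALUR}; the only new ingredient needed is that a rotund point of $B_{\mathbb{X}}$ determines a \emph{singleton maximal face}. So the first thing I would establish is the observation that if $x \in S_{\mathbb{X}}$ is rotund, then $\{x\}$ is a maximal face of $B_{\mathbb{X}}$.

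To prove this observation I would first verify that $\{x\}$ is a face, i.e. that $x$ is an extreme point. If $x = \tfrac{1}{2}(y+z)$ with $y,z \in B_{\mathbb{X}}$, then $\|y\|=\|z\|=1$ and, by convexity of the norm, the whole segment $[y,z]$ lies on $S_{\mathbb{X}}$; in particular $\|\tfrac{1}{2}(x+y)\| = 1$, so rotundity forces $y = x$ and hence $y = z = x$. For maximality, I would take any face $F$ containing $x$ and any $z \in F$; since $F$ is convex and $F \subset S_{\mathbb{X}}$, the midpoint $\tfrac{1}{2}(x+z)$ again lies on $S_{\mathbb{X}}$, and rotundity gives $z = x$. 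Thus $F = \{x\}$, so $\{x\}$ is maximal.

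With this observation in hand the proof would conclude as follows. Since $\mathbb{Y}$ is a finite-dimensional, hence closed, proper subspace of $\mathbb{X}$, the Hahn--Banach theorem supplies a nonzero $x^* \in \mathbb{X}^*$ with $\mathbb{Y} \subseteq \ker x^*$. As the set of rotund points of $B_{\mathbb{X}}$ separates $\mathbb{X}^*$, there is a rotund point $x \in S_{\mathbb{X}}$ with $x^*(x) \neq 0$; since $x^*$ annihilates $\mathbb{Y}$, this forces $x \notin \mathbb{Y}$. Assuming, towards a contradiction, that $\mathbb{Y}$ is strongly anti-coproximinal, the observation shows that $\{x\}$ is a maximal face of $B_{\mathbb{X}}$, and so Theorem \ref{max face} yields $\mathbb{Y} \cap \{x\} \neq \emptyset$, i.e. $x \in \mathbb{Y}$, contradicting $x \notin \mathbb{Y}$. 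Hence $\mathbb{Y}$ is not strongly anti-coproximinal.

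The only genuine work here is the observation that rotundity upgrades $\{x\}$ from a face to a \emph{maximal} face; the point to be careful about is ruling out that $x$ lies in a larger face, which is exactly what the rotundity condition $\|\tfrac{1}{2}(x+z)\| = 1 \Rightarrow x = z$ prevents. Everything afterwards is a formal application of Theorem \ref{max face} and the separation hypothesis, in parallel with the proof of the w-ALUR corollary.
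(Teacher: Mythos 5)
Your proposal is correct and follows exactly the route the paper intends: the paper states this corollary as ``immediate'' from Theorem \ref{max face}, with the implicit chain being precisely your argument (Hahn--Banach plus separation produces a rotund point $x \notin \mathbb{Y}$, rotundity makes $\{x\}$ a singleton maximal face, and Theorem \ref{max face} then forces $x \in \mathbb{Y}$, a contradiction). Your only added value is writing out the verification that a rotund point yields a singleton maximal face --- which the paper leaves as an easy remark (stated there for w-ALUR points) --- and correctly reading the paper's garbled definition of rotundity as $\|\tfrac{x+y}{2}\|=1 \Rightarrow x=y$.
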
 
    
         For a finite-dimensional subspace $\mathbb{Y}$ of $\mathbb{X},$ if $S_\mathbb{X} \setminus S_\mathbb{Y}$ contains a rotund point then $\mathbb{Y}$ is not strongly anti-coproximinal subspace in $\mathbb{X}.$


Our next goal is to show that the only real Banach spaces which contain a finite-dimensional strongly anti-coproximinal subspace must be themselves finite-dimensional polyhedral. We require the following lemma for our purpose.

\begin{lemma}\label{nu index}\cite[Lemma 2.1]{SPBB}
	Let $\mathbb{X}$ be an $n$-dimensional polyhedral Banach space. Then $f \in S_{\mathbb{X}^*}$ is an extreme point of $B_{\mathbb{X}^*}$ if and only if $f$ is a supporting functional corresponding to a maximal face of $B_\mathbb{X}.$
\end{lemma}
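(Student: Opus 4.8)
The plan is to derive both implications from the polytope structure of $B_{\mathbb{X}}$ and $B_{\mathbb{X}^*}$. Since $\mathbb{X}$ is $n$-dimensional and polyhedral, $Ext(B_{\mathbb{X}})$ is finite, $B_{\mathbb{X}} = co(Ext(B_{\mathbb{X}}))$, and for every $g \in \mathbb{X}^*$ the norm $\|g\| = \max_{x \in B_{\mathbb{X}}}|g(x)|$ is attained at a vertex. I will use these facts together with the standard polytope facts that every proper face is contained in a maximal face, and that the maximal faces are precisely the facets, i.e.\ the faces whose affine hull is a hyperplane.

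First I would handle the implication that a supporting functional of a maximal face is extreme. Let $f \in S_{\mathbb{X}^*}$ support a maximal face $F$, so $f(x)=1$ for all $x \in F$. Because $F$ is maximal, $\mathrm{aff}(F)$ is a hyperplane, and since $f$ does not vanish on it, $f$ is the \emph{unique} linear functional taking the value $1$ on $\mathrm{aff}(F)$. Now if $f = \tfrac12(g+h)$ with $g,h \in B_{\mathbb{X}^*}$, then for each $x \in F$ the bounds $g(x),h(x)\le 1$ force $g(x)=h(x)=1$; hence $g$ and $h$ both support $F$ and, by the uniqueness just noted, $g=h=f$. Thus $f \in Ext(B_{\mathbb{X}^*})$.

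For the converse I would start from $f \in Ext(B_{\mathbb{X}^*})$ and consider the face $F_f = \{x \in B_{\mathbb{X}} : f(x)=1\}$, which is nonempty (by compactness and $\|f\|=1$) and supported by $f$; it suffices to show $F_f$ is maximal, i.e.\ $\dim \mathrm{aff}(F_f)=n-1$. Suppose instead $\dim \mathrm{aff}(F_f)\le n-2$. Fixing $x_0 \in F_f$ and writing $V$ for the direction space of $\mathrm{aff}(F_f)$, the functionals vanishing on $\mathrm{aff}(F_f)$ form the annihilator of $\mathrm{span}(\{x_0\}\cup V)$, which has dimension $n-\dim \mathrm{aff}(F_f)-1\ge 1$; pick a nonzero $u$ in it. Since $u$ is linear and vanishes on $\mathrm{aff}(F_f)$, it also vanishes on $-\mathrm{aff}(F_f)=\mathrm{aff}(F_{-f})$, so $u$ kills every vertex $v$ with $|f(v)|=1$. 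For the remaining vertices $|f(v)|<1$ strictly, and as there are finitely many of them I can choose $t>0$ so small that $|f(v)\pm t\,u(v)|\le 1$ for every vertex $v$; evaluating the dual norm at vertices then gives $f\pm tu \in B_{\mathbb{X}^*}$. This exhibits $f=\tfrac12\big((f+tu)+(f-tu)\big)$ as a nontrivial convex combination, contradicting extremality. Hence $F_f$ is a maximal face supported by $f$.

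The main obstacle is the converse direction, specifically keeping the perturbations $f\pm tu$ inside $B_{\mathbb{X}^*}$. The subtle point is the antipodal face $F_{-f}$: at a vertex $v$ with $f(v)=-1$ a perturbation could push the value below $-1$, so one must ensure $u(v)=0$ there as well. This is exactly what linearity of $u$ delivers, since vanishing on $\mathrm{aff}(F_f)$ forces vanishing on $-\mathrm{aff}(F_f)$. The remaining ingredients, namely finiteness of $Ext(B_{\mathbb{X}})$, attainment of the dual norm at a vertex, and the identification of maximal faces with facets, are standard consequences of polyhedrality, and I would invoke them directly; alternatively, the whole statement is an instance of the inclusion-reversing, dimension-complementing polar duality between the face lattices of $B_{\mathbb{X}}$ and $B_{\mathbb{X}^*}$, under which vertices of $B_{\mathbb{X}^*}$ correspond exactly to facets of $B_{\mathbb{X}}$.
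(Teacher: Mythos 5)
This lemma is never proved in the paper: it is imported verbatim from \cite[Lemma 2.1]{SPBB}, so there is no internal proof to compare your argument against, and I can only judge it on its own terms. So judged, your proof is correct. The forward implication works exactly as you say: $f\equiv 1$ on $F$ extends to $\mathrm{aff}(F)$ by affineness, and once $\mathrm{aff}(F)$ is known to be a hyperplane (necessarily missing the origin, hence linearly spanning $\mathbb{X}$), the averaging bounds $g(x),h(x)\le 1$ force $g=h=1$ on $F$ and therefore $g=h=f$. The converse is sound and essentially self-contained: your $u$ annihilates $\mathrm{span}(\mathrm{aff}(F_f))$, linearity makes $u$ vanish on the antipodal face as well (which, as you correctly flag, is the one place a careless perturbation argument would break), finiteness of $Ext(B_{\mathbb{X}})$ supplies a uniform $t>0$, and computing the dual norm over the vertices gives $f\pm tu\in B_{\mathbb{X}^*}$, contradicting extremality unless $\dim\mathrm{aff}(F_f)=n-1$; a face with $(n-1)$-dimensional affine hull is then maximal, since any face containing it has the same affine hull $\{x: f(x)=1\}$ and so is contained in $F_f=B_{\mathbb{X}}\cap\{x:f(x)=1\}$. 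The one load-bearing ingredient you invoke rather than prove is the polytope fact that maximal faces are facets: your forward direction genuinely needs it, since without it $\mathrm{aff}(F)$ could a priori have dimension less than $n-1$ and the uniqueness step collapses (for non-polyhedral balls maximal faces can even be singletons). That fact is standard polytope theory, so citing it is legitimate, but be aware that it is the only step not reduced to first principles, and it is precisely where polyhedrality, rather than mere finite-dimensionality, enters that half of the argument.
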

 
 \begin{theorem}
 	Let $\mathbb{Y}$ be a finite-dimensional polyhedral subspace of a real Banach space $\mathbb{X}.$ Suppose $\mathbb{Y}$ is strongly anti-coproximinal in $\mathbb{X}$. Then 
 	\begin{itemize}
 		\item [(i)] for any two distinct maximal faces $F_1, F_2$ of $B_{\mathbb{X}},$ $F_1 \cap \mathbb{Y} \neq F_2 \cap \mathbb{Y}$
 		
 \item [(ii)] $\mathbb{X}$ is a finite-dimensional polyhedral Banach space.
 	\end{itemize}
 \end{theorem}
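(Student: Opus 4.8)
My plan is to deduce both parts from a single structural fact: the \emph{trace map} $F \mapsto F\cap\mathbb{Y}$, sending a maximal face of $B_{\mathbb{X}}$ to a subset of $B_{\mathbb{Y}}$, is well-behaved. By Theorem~\ref{max face} every maximal face $F$ of $B_{\mathbb{X}}$ meets $\mathbb{Y}$, and if $z_F^*$ is a functional supporting $F$ (obtained as an element of $J(x)$ for a relative interior point $x\in int(F)$, exactly as in the proof of Theorem~\ref{theorem:face}), then $F\cap\mathbb{Y}=\{y\in B_{\mathbb{Y}}: z_F^*(y)=1\}$ is a \emph{face} of the polytope $B_{\mathbb{Y}}$. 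Since $\mathbb{Y}$ is finite-dimensional polyhedral, $B_{\mathbb{Y}}$ has only finitely many faces. Thus part (i) asserts precisely that the trace map is \emph{injective}, and once (i) is known, the image being contained in a finite set forces $B_{\mathbb{X}}$ to have only finitely many maximal faces; part (ii) will then follow by an embedding argument. So (i) is the crux and (ii) is its harvest.

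\textbf{Part (i).} I would argue by contradiction, assuming distinct maximal faces $F_1\neq F_2$ with $F_1\cap\mathbb{Y}=F_2\cap\mathbb{Y}=:G\neq\emptyset$. Choosing supporting functionals $z_1^*,z_2^*$ as above, maximality gives $F_i=\{v\in B_{\mathbb{X}}:z_i^*(v)=1\}$, so $F_1\neq F_2$ yields $z_1^*\neq z_2^*$; moreover $F_1\neq -F_2$ (otherwise $G=-G$ would put a segment through $0$ into $S_{\mathbb{Y}}$). The key reformulation, read off from Lemma~\ref{Chem}, is that for $p\in S_{\mathbb{X}}$ one has $\mathbb{Y}\perp_B^{\epsilon}p$ iff $\sup_{y\in S_{\mathbb{Y}}}\min_{y^*\in J(y)}|y^*(p)|\le\epsilon$; hence strong anti-coproximinality means this supremum equals $\|p\|$ for every $p\neq 0$. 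Now pick $p\in int(F_1)$ avoiding the proper subfaces $F_1\cap F_2$ and $F_1\cap(-F_2)$, so that $z_1^*(p)=1$ while $|z_2^*(p)|<1$. Applying the reformulation at $p$, together with the limiting/compactness scheme in the proof of Theorem~\ref{theorem:face} (pass $\epsilon_n\to 1$, extract $y_n\in S_{\mathbb{Y}}$ with all supporting functionals nearly norming $p$, and take a weak$^*$-cluster functional), I obtain a witness $y\in F_1\cap\mathbb{Y}=G\subseteq F_2$, whence $z_2^*\in J(y)$. The contradiction I am aiming for is $|z_2^*(p)|=1$, clashing with $|z_2^*(p)|<1$.

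\textbf{Main obstacle.} The delicate point in (i) is transferring the ``near-norming'' property along the sequence $\{y_n\}$ to the \emph{specific} functional $z_2^*$ at the limit: the map $y\mapsto\min_{y^*\in J(y)}|y^*(p)|$ is only lower semicontinuous, so the defining supremum need not be attained, and $z_2^*\in J(y)$ need not lie in $J(y_n)$. I expect this to require showing that $G=F\cap\mathbb{Y}$ is in fact a \emph{maximal} face of $B_{\mathbb{Y}}$ and that $z_F^*$ is the unique norm-preserving extension of $z_F^*|_{\mathbb{Y}}$, so that $G$ recovers $z_F^*$ and hence $F$; this is the real engine of injectivity and the step I would spend the most effort on.

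\textbf{Part (ii).} Granting (i), the set of maximal faces of $B_{\mathbb{X}}$ is finite, say $F_1,\dots,F_N$ (closed under $v\mapsto -v$), with supporting functionals $z_1^*,\dots,z_N^*\in S_{\mathbb{X}^*}$. Next I would observe that \emph{every} $x\in S_{\mathbb{X}}$ lies in some maximal face: taking $x^*\in J(x)$, the exposed face $\{v\in B_{\mathbb{X}}:x^*(v)=1\}$ contains $x$, and a Zorn's-lemma argument (the union of a chain of faces is again a face) places it inside a maximal face, necessarily one of the $F_i$. Consequently $\max_i z_i^*(x)=1$ for $x\in S_{\mathbb{X}}$, and by homogeneity and symmetry $\|x\|=\max_i|z_i^*(x)|$ for all $x$. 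Therefore $x\mapsto(z_i^*(x))_{i=1}^N$ is a linear isometry of $\mathbb{X}$ into $\ell_\infty^N$, which forces $\dim\mathbb{X}\le N<\infty$; and since the norm is the maximum of finitely many linear functionals, $B_{\mathbb{X}}=\{x:\max_i|z_i^*(x)|\le 1\}$ is a polytope, so $\mathbb{X}$ is finite-dimensional polyhedral, with Lemma~\ref{nu index} identifying the $z_i^*$ with the extreme points of $B_{\mathbb{X}^*}$. Finally, statement (i) itself is the injectivity established above, completing both parts.
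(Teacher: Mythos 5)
Your part (ii) is fine: granting (i), injectivity of the trace map $F\mapsto F\cap\mathbb{Y}$ into the finitely many faces of the polytope $B_{\mathbb{Y}}$ gives finitely many maximal faces of $B_{\mathbb{X}}$, and your $\ell_\infty^N$-embedding argument correctly supplies a step the paper itself leaves implicit. The genuine gap is in part (i), and it is exactly the point you flag as the ``main obstacle'': your limiting scheme cannot deliver the contradiction $|z_2^*(p)|=1$. Strong anti-coproximinality gives $y_n\in S_{\mathbb{Y}}$ with $\min_{y^*\in J(y_n)}|y^*(p)|>\epsilon_n$, and after extracting $y_n\to y$, the upper semicontinuity of the set-valued map $J$ only shows that cluster points of selections $y_n^*\in J(y_n)$ lie in $J(y)$ and norm $p$; it does not show that \emph{every} element of $J(y)$ norms $p$. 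Indeed, the function $\phi(y)=\min_{y^*\in J(y)}|y^*(p)|$ is lower but not upper semicontinuous: in $\ell_\infty^2$ take $y_n=(1,1-\tfrac{1}{n})\to y=(1,1)$ and $p=(\tfrac12,-\tfrac12)$, so that $\phi(y_n)=\tfrac12$ for all $n$ while $\phi(y)=0$, because $J(y)$ jumps up to a whole segment at the limit. So $\phi(y_n)\to 1$ says nothing about $\phi(y)$, and in particular nothing about the specific functional $z_2^*\in J(y)$. Your proposed repair --- that $G$ is a maximal face of $B_{\mathbb{Y}}$ and that $z_F^*$ is the \emph{unique} norm-one extension of $z_F^*|_{\mathbb{Y}}$ --- is left unproved, and the unique-extension claim is essentially the injectivity you are trying to establish (two distinct maximal faces with equal traces produce precisely two distinct norm-one extensions of one functional on $\mathbb{Y}$), so as it stands the plan is circular.

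The paper closes (i) with no limiting argument at all, by letting the polyhedrality of $\mathbb{Y}$ do the work. List the maximal faces $\pm Q_1,\dots,\pm Q_r$ of $B_{\mathbb{Y}}$, fix supporting functionals $y_i^*\in S_{\mathbb{Y}^*}$ and Hahn--Banach extensions $x_i^*\in S_{\mathbb{X}^*}$, and take $x\in int(F_1)$. First, some $x_k^*$ must support $F_1$: otherwise $\epsilon_0=\max_i|x_i^*(x)|<1$, and since every $y\in S_{\mathbb{Y}}$ lies in some $Q_i$, Lemma~\ref{Chem} applied with $x_i^*\in J(y)$ yields $\mathbb{Y}\perp_B^{\epsilon_0}x$, contradicting strong anti-coproximinality. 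Then $Q_k\subseteq F_1$, and the key substitution is: since $Q_k\subseteq F_1\cap\mathbb{Y}=F_2\cap\mathbb{Y}\subseteq F_2$, every $y\in Q_k$ has the supporting functional $z^*$ of $F_2$ in $J(y)$, with $|z^*(x)|<1$ because $F_1\neq F_2$; for $y\in Q_j$, $j\neq k$, one uses $x_j^*$, with $|x_j^*(x)|<1$ (else $Q_j$ and $Q_k$ would both coincide with the face $F_1\cap B_{\mathbb{Y}}$, contradicting Lemma~\ref{nu index}-type distinctness). The maximum of these finitely many values is a single $\epsilon<1$ with $\mathbb{Y}\perp_B^{\epsilon}x$, the desired contradiction. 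In short, instead of forcing all of $J(y)$ at a limit point to norm $x$, the paper needs only one good functional per maximal face of $B_{\mathbb{Y}}$, and the finiteness of those faces supplies the uniform $\epsilon<1$; you would need to adopt this mechanism (or prove your unique-extension claim independently) to complete part (i).
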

 
 \begin{proof}
 	(i) If $F$ is a face of $B_{\mathbb{X}},$ then it clear that $F \cap B_{\mathbb{Y}}$ is also a face. Suppose on the contrary that $F_1, F_2$ are two distinct maximal faces of $B_{\mathbb{X}}$ such that $F_1 \cap \mathbb{Y}= F_2 \cap \mathbb{Y}.$ Let $ \pm Q_1, \pm Q_2, \ldots, \pm Q_r$ be the distinct maximal faces of $B_\mathbb{Y}.$   Suppose that for each  $1 \leq i \leq r,$ $y_i^* \in S_{\mathbb{Y}^*}$ supports the face $Q_i.$  Observe that this correspondence between $y_i^*$ and $Q_i$ is one-one (see Lemma \ref{nu index}). For each  $1 \leq i \leq r,$ suppose that $x_i^* \in S_{\mathbb{X}^*}$ is a Hahn-Banach extension of $y_i^*.$  Take $x \in int(F_1).$ We claim that $x_k^*$ supports the face $F_1,$ for some $k,  1\leq k \leq r.$ Otherwise, let us assume that $ x_i^*$ do not support the face $F_1,$ for any $1 \leq i \leq r.$ Let $ \max \{ |x_i^*(x)|: 1 \leq i \leq r\}=\epsilon_0.$ Clearly $\epsilon_0 < 1.$ Let $y \in S_{\mathbb{Y}}.$ Then $ y \in Q_k,$ for some $1 \leq k \leq r.$ So, $x_k^* \in J(y)$ and $|x_k^*(x)| \leq \epsilon_0.$ Thus from Lemma \ref{Chem}, we get $y \perp_B^{\epsilon_0} x.$ Therefore, $\mathbb{Y} \perp_B^{\epsilon_0} x,$ a contradiction to the fact that $\mathbb{Y}$ is strongly anti-coproximinal in $\mathbb{X}$.This establishes our claim and so $x_k^*$ supports the face $F_1,$ for some $k, 1 \leq k \leq r.$  As $F_1$ is a maximal face, observe that $Q_k \subset F_1.$  Let $z^* \in S_{\mathbb{X}^*}$ support the face $F_2.$ Clearly, $z^* \neq \pm x_k^*.$ Let $$\epsilon= \max \{|z^*(x)|, |x_i^*(x)|: i \in \{1, 2, \ldots,r\} \setminus \{k\}\}.$$
 	We claim  that $\epsilon <1.$ First we note that $|z^*(x)| < 1,$ as otherwise, $F_1= F_2.$ If possible, let $j \in \{1, 2, \ldots, r\} \setminus \{k\}$ be such that $x_j^*(x)=1.$ Then $x_j^*$ supports $F_1.$  It is clear that $x_j^*$ and $x_k^*$ support $Q_j$ and $Q_k,$ respectively. Since $x_j^*, x_k^*$ support the maximal face $F_1,$ it is easy to observe that $Q_j, Q_k \subset F_1 \cap B_\mathbb{Y}.$ This contradicts $Q_j, Q_k$ are two distinct  maximal faces of $B_{\mathbb{Y}}.$ Therefore, $\epsilon < 1.$ Now take $y \in S_\mathbb{Y}.$ If $y \in Q_j,$ for some $j \in \{1, 2, \ldots, r\}\setminus \{k\}$ then $x_j^* \in J(y)$ and $|x_j^*(x)| \leq \epsilon.$ Suppose that $y \in Q_k.$ Since  $ Q_k \subset F_1 \cap \mathbb{Y} = F_2 \cap \mathbb{Y},$ it follows that $z^* \in J(y),$ and so, we again obtain that $|z^*(x)| \leq \epsilon.$ This proves that  $y \perp_B^{\epsilon} x.$ Since  $y\in S_\mathbb{Y}$ is arbitrary, $\mathbb{Y} \perp_B^{\epsilon} x,$ contradicting the fact that $\mathbb{Y}$ is strongly anti-coproximinal in $\mathbb{X}$.\\

 	(ii)  Suppose that $ \pm Q_1, \pm Q_2, \ldots, \pm Q_r$ are the maximal faces of $B_{\mathbb{Y}}.$ Clearly $Q_i= F_i \cap B_{\mathbb{Y}},$ for some maximal face $F_i$ of $B_{\mathbb{X}}.$ We claim that $\pm F_1, \pm F_2, \ldots, \pm F_r$ are the only maximal faces of $B_{\mathbb{X}}.$ If possible, let $F$ be a maximal face of $B_{\mathbb{X}}$ such that $F \neq \pm F_i,$ for all $i =1, 2, \ldots, r.$ From Theorem \ref{max face}, we note that  $F \cap B_{\mathbb{Y}}\neq \emptyset.$ Clearly, $F \cap B_\mathbb{Y}$ is a maximal face of $B_\mathbb{Y}.$ Therefore, $F \cap B_\mathbb{Y} = Q_j = F_j \cap B_\mathbb{Y},$ for some $j \in \{1, 2, \ldots, r\}.$ Since $F \neq \pm F_j,$ following (i) we arrive at a contradiction. This proves our claim and consequently, $\mathbb{X}$ is a finite-dimensional polyhedral Banach space.

 	 	 \end{proof}

\begin{remark}
	We observe that the condition given in Theorem \ref{max face} is necessary but not sufficient. Consider the space $\mathbb{X} = \ell_1^3(\mathbb{R})$ and the subspace  $\mathbb{Y}= span \{(1, 0, 0), (0, 1, 0)\}.$ Then it is easy to see that the subspace $\mathbb{Y}$ intersects all the maximal faces of $B_\mathbb{X},$ whereas $\mathbb{Y}$ is a coproximinal subspace. Indeed, given any $(x, y, z) \in \mathbb{X} \setminus \mathbb{Y},$ $(x, y, 0) \in \mathbb{Y}$ is a best coapproximation to $(x, y, z)$ out of $\mathbb{Y}.$ This shows that the subspace $\mathbb{Y}$ may intersect every maximal face of $B_\mathbb{X}$ but is not strongly anti-coproximinal in $\mathbb{X}$. However, as we observe in the next result, if $\mathbb{Y}$ intersects the relative interior of every maximal face of $B_\mathbb{X}$, then $\mathbb{Y}$ is indeed strongly anti-coproximinal in $\mathbb{X}$.
\end{remark}

\begin{theorem}\label{sufficient:interior}
	Let $\mathbb{X}$ be a Banach space and let $\mathbb{Y}$ be a    proper subspace of $\mathbb{X}.$ If $\mathbb{Y}$ intersects the relative interior of every facet of $B_\mathbb{X}$ then $\mathbb{Y}$ is strongly anti-coproximinal in $\mathbb{X}$.
\end{theorem}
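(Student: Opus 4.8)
The plan is to verify the defining condition of strong anti-coproximinality directly. Fix an arbitrary nonzero $x \in \mathbb{X}$ and $\epsilon \in [0,1)$, normalise so that $\|x\|=1$, and produce a single $y \in S_{\mathbb{Y}}$ with $y \not\perp_B^\epsilon x$; this already forces $\mathbb{Y} \not\perp_B^\epsilon x$, which is exactly what is needed. By Lemma \ref{Chem}, $y \not\perp_B^\epsilon x$ holds precisely when \emph{every} supporting functional of $y$ is large on $x$, i.e. $|y^*(x)| > \epsilon$ for all $y^* \in J(y)$. Thus the whole problem reduces to locating a point $y \in \mathbb{Y}$ all of whose supporting functionals nearly attain their norm on $x$.

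First I would pass to the dual. Since $\phi \mapsto |\phi(x)|$ is $w^*$-continuous and convex on the $w^*$-compact convex set $B_{\mathbb{X}^*}$, the Bauer maximum principle guarantees that its maximum, namely $\|x\|=1$, is attained at an extreme point $x^* \in Ext(B_{\mathbb{X}^*})$; replacing $x$ by $-x$ if necessary, I may assume $x^*(x)=1$. The key geometric input is the correspondence between extreme points of $B_{\mathbb{X}^*}$ and facets of $B_{\mathbb{X}}$ (the facet analogue of Lemma \ref{nu index}): this $x^*$ is the supporting functional of a facet $F$ of $B_{\mathbb{X}}$.

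Next I would feed the hypothesis into this facet. By assumption $\mathbb{Y}$ meets the relative interior of $F$, so I choose $y \in \mathbb{Y}\cap int(F)$. The crucial point is that at a relative-interior point of a facet the set of supporting functionals collapses to exactly those functionals supporting the whole facet, and for a facet this is the singleton $\{x^*\}$; hence $y$ is smooth with $J(y)=\{x^*\}$. Consequently $|y^*(x)|=|x^*(x)|=1>\epsilon$ for the unique $y^*=x^*\in J(y)$, and Lemma \ref{Chem} gives $y \not\perp_B^\epsilon x$. Since $y \in S_{\mathbb{Y}}$, this yields $\mathbb{Y}\not\perp_B^\epsilon x$, and as $x\neq 0$ and $\epsilon\in[0,1)$ were arbitrary, $\mathbb{Y}$ is strongly anti-coproximinal.

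I expect the main obstacle to be the two structural facts about facets underlying the middle steps: (a) that the extreme functional $x^*$ supplied by Bauer's principle genuinely supports a facet, rather than merely an exposed face that fails to be maximal; and (b) that a point in the relative interior of a facet is smooth, so that its set of supporting functionals reduces to the single facet normal. Both are transparent in the polyhedral situation governed by Lemma \ref{nu index}, and the essence of the argument is that the hypothesis ``$\mathbb{Y}$ meets the relative interior of every facet'' converts each facet into a smooth point of $\mathbb{Y}$ whose unique supporting functional is a facet normal. Because these normals exhaust the extreme points of $B_{\mathbb{X}^*}$ and $\|x\|$ is attained among them, the $\epsilon$-orthogonality test — which succeeds as soon as \emph{one} supporting functional is small — can never be satisfied at the facet aligned with $x$.
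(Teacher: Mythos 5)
Your overall skeleton (fix nonzero $x$ and $\epsilon$, and use Lemma \ref{Chem} to reduce everything to producing one $y\in S_{\mathbb{Y}}$ all of whose supporting functionals are of modulus $>\epsilon$ at $x$) is correct and matches the paper's outline. But the two ``structural facts'' you flagged as obstacles are not mere technicalities to be checked: both are \emph{false} in a general Banach space, and the theorem is stated for arbitrary Banach spaces, not for the finite-dimensional polyhedral ones where Lemma \ref{nu index} applies. For your step (b), take $B_{\mathbb{X}}$ to be the lens $\{z\in\mathbb{R}^2:\|z-(\tfrac12,0)\|_2\le 1\}\cap\{z:\|z+(\tfrac12,0)\|_2\le 1\}$: its boundary contains no segments, so the corner $(0,\sqrt3/2)$ is by itself a maximal face whose relative interior is itself, yet it is not a smooth point; the set of supporting functionals at a relative interior point of a maximal face need not collapse to a singleton. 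For your step (a), take $B_{\mathbb{X}}$ to be the convex hull of the upper half of the unit disk and the point $(0,-1)$: the functional $x^*=(1,0)$ is an extreme point of $B_{\mathbb{X}^*}$ and attains its norm at $x=(1,0)$, but the face it supports is the singleton $\{(1,0)\}$, which is properly contained in the face $[(1,0),(0,-1)]$ and hence not maximal. Since Bauer's principle may hand you exactly this $x^*$, your argument has nothing to feed the hypothesis into: the extreme functional you produce need not support any facet.

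The paper's proof avoids both difficulties with one elementary observation that your argument is missing, and this is the real content of the theorem. Take \emph{any} maximal face $F$ with $x\in F$ (no dual extreme-point machinery is needed to produce it), and take $y\in\mathbb{Y}\cap int(F)$, which is possible by hypothesis; note $y\neq x$ since $x\notin\mathbb{Y}$. Because $y$ is a relative interior point of $F$ and $x\in F$, one can write $y=(1-t)x+tz$ with $z\in F$ and $t\in(0,1)$. Then for \emph{every} $y^*\in J(y)$, the identity $1=y^*(y)=(1-t)y^*(x)+ty^*(z)$, together with $|y^*(x)|,|y^*(z)|\le 1$ and the fact that $1$ is an extreme point of the closed unit disk of scalars, forces $y^*(x)=1$. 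This yields $|y^*(x)|=1>\epsilon$ for all $y^*\in J(y)$, which is exactly what your own reduction via Lemma \ref{Chem} requires --- and it does so without ever knowing that $y$ is smooth, or that $F$ is exposed, or that its supporting functional is extreme. So the repair is not to strengthen (a) and (b) but to discard them: replace the appeal to smoothness and to the facet/extreme-functional duality by this convex-combination argument, which is valid in every Banach space.
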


\begin{proof}
	Suppose on the contrary that $\mathbb{Y}$ is not strongly anti-coproximinal in $\mathbb{X}$. Then there exists $x \in S_\mathbb{X} \setminus S_\mathbb{Y}$ and $\epsilon \in [0, 1)$ such that $\mathbb{Y} \perp_B^{\epsilon} x.$ Let us consider a maximal face $F \subset S_\mathbb{X}$ such that $x \in F.$ Since $\mathbb{Y} \cap int(F)\neq \emptyset,$ let us take $y \in S_\mathbb{Y} \cap int(F).$  Then there exists $z \in F$ such that $y=(1-t)x+ tz,$ for some $t \in (0, 1).$ It is immediate that for any $y^* \in J(y),$ we get $y^*(x)=1.$ This implies $y \not\perp_B^{\epsilon} x,$ a contradiction. This establishes the theorem.
\end{proof}

 We end this section with the following remark.

 \begin{remark}
We would like to mention that we could not establish whether the above-mentioned sufficient condition for strongly anti-coproximinal subspaces is also necessary. However, in case of a \textit{finite-dimensional polyhedral Banach space $ \mathbb{X}, $} it is known that a subspace $\mathbb{Y}$ of $ \mathbb{X} $ is strongly anti-coproximinal if and only if  $\mathbb{Y}$ intersects the relative interior of every facet of $B_\mathbb{X}$ (see \cite[Th. 2.20]{SSGP3}). 
 \end{remark}

\section*{Anti-coproximinality in function spaces}

In this section, we consider anti-coproximinal subspaces of the space of all scalar valued bounded (continuous) functions. Unlike the  geometric conditions obtained for anti-coproximinal and strongly anti-coproximinal subspaces in the previous section, we conduct an analytic study of such subspaces in the space $\ell_\infty(K)$ and $C(K).$ We first characterize the anti-coproximinal  and strongly anti-coproximinal subspaces in $\ell_\infty(K).$

\begin{theorem}\label{anticoproximinal l_infty}
	Let $K$ be a nonempty set and  let $\mathbb{Y}$ be a proper closed subspace of $\ell_\infty(K).$ Then the following are equivalent: 
		\begin{itemize}		
		\item[(i)] $\mathbb{Y}$ is strongly anti-coproximinal in $\ell_\infty(K)$.
			\item[(ii)] $\mathbb{Y}$ is anti-coproximinal in $\ell_\infty(K)$.
			\item[(iii)] for any $k \in K,$  there exists $f \in \mathbb{Y}$ such that $|f(k)|> \lim_{n \to \infty} |f(k_n)|, $ $ ~  \forall \{k_n\} \subset K$ with $k_n \neq k$ for all but finitely many $n \in \mathbb{N}.$ 
	\end{itemize} 
\end{theorem}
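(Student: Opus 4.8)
The plan is to establish the cycle (i) $\Rightarrow$ (ii) $\Rightarrow$ (iii) $\Rightarrow$ (i). The implication (i) $\Rightarrow$ (ii) is immediate: since $\perp_B^0$ coincides with $\perp_B$, strong anti-coproximinality (which quantifies over all $\epsilon \in [0,1)$, in particular $\epsilon = 0$) forces $\mathbb{Y} \not\perp_B x$ for every nonzero $x$, which is exactly anti-coproximinality. Before the remaining two implications I would record a convenient reformulation of (iii). For a fixed $k$ and $f \in \ell_\infty(K)$, the largest value of $\lim_n |f(k_n)|$ over sequences with $k_n \neq k$ eventually equals $\sup_{k' \neq k}|f(k')|$: every subsequential limit is bounded by this supremum, and the supremum is realized as a limit along a sequence lying in $K \setminus \{k\}$. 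Hence (iii) is equivalent to the statement that \emph{for each $k \in K$ there is $f \in \mathbb{Y}$ with $|f(k)| > \sup_{k' \neq k}|f(k')|$}, i.e. $f$ attains its norm only at $k$, and as a strict, isolated peak.

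For (ii) $\Rightarrow$ (iii) I would argue the contrapositive. If (iii) fails there is a point $k_0$ with $|f(k_0)| \le \sup_{k' \neq k_0}|f(k')|$ for every $f \in \mathbb{Y}$. Take $g$ to be the indicator function of $\{k_0\}$ (value $1$ at $k_0$, value $0$ elsewhere), a nonzero element of $\ell_\infty(K)$. For any $f \in \mathbb{Y}$ and any scalar $\lambda$ we have $\|f + \lambda g\| = \max\{|f(k_0)+\lambda|,\ \sup_{k' \neq k_0}|f(k')|\} \ge \sup_{k'\neq k_0}|f(k')| = \|f\|$, the last equality holding precisely because $|f(k_0)| \le \sup_{k'\neq k_0}|f(k')|$. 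Thus $f \perp_B g$ for every $f \in \mathbb{Y}$, so $\mathbb{Y} \perp_B g$ with $g \neq 0$, contradicting (ii).

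The substantive implication is (iii) $\Rightarrow$ (i), for which I would invoke Theorem \ref{approximate:l(K)}. Suppose $\mathbb{Y}$ is not strongly anti-coproximinal; then, after normalizing (both $\perp_B^\epsilon$-arguments are scale invariant), there are $g \in S_{\ell_\infty(K)}$ and $\epsilon \in [0,1)$ with $\mathbb{Y} \perp_B^\epsilon g$. Since $\|g\| = 1 > \epsilon$, choose $k_0 \in K$ with $|g(k_0)| > \epsilon$, and let $f \in S_{\mathbb{Y}}$ be the strict peak function at $k_0$ supplied by the reformulated (iii), so $|f(k_0)| = 1$ while $\sup_{k' \neq k_0}|f(k')| < 1$. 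The key observation is that any admissible data $\{k_n\},\{\gamma_n\}$ in Theorem \ref{approximate:l(K)}, with $|\gamma_n| = 1$ and $\lim_n \gamma_n f(k_n) = 1$, must have $k_n = k_0$ for all but finitely many $n$; otherwise a subsequence lies in $K \setminus \{k_0\}$, where $|f| \le \sup_{k'\neq k_0}|f(k')| < 1$, contradicting $|f(k_n)| \to 1$. Along such a sequence $\gamma_n \to \overline{f(k_0)}$, whence $\overline{\gamma_n}\,g(k_n) \to f(k_0)g(k_0)$, so the set appearing in Theorem \ref{approximate:l(K)} is the singleton $\{f(k_0)g(k_0)\}$. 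Its convex hull meets $\mathcal{D}(\epsilon)$ only if $|f(k_0)g(k_0)| = |g(k_0)| \le \epsilon$, contradicting $|g(k_0)| > \epsilon$. Therefore $f \not\perp_B^\epsilon g$, contradicting $\mathbb{Y} \perp_B^\epsilon g$, and (i) follows.

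The main obstacle is exactly the step collapsing the attainment data to the single point $k_0$: it is the \emph{strictness} of the peak in (iii), namely the gap $\sup_{k'\neq k_0}|f(k')| < |f(k_0)|$, that reduces the orthogonality set of Theorem \ref{approximate:l(K)} to a singleton and thereby lets the quantitative bound $|g(k_0)| > \epsilon$ defeat the approximate orthogonality. I expect the only delicate points beyond this to be bookkeeping: justifying the reformulation of (iii) uniformly (including the handling of sequences whose limit need not exist, via subsequential limits) and confirming the scale invariance used to normalize $f$ and $g$.
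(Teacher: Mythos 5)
Your proposal is correct and follows essentially the same route as the paper: the same cycle (i) $\Rightarrow$ (ii) $\Rightarrow$ (iii) $\Rightarrow$ (i), the same indicator-function witness $g$ at the bad point $k_0$ for (ii) $\Rightarrow$ (iii), and the same key step for (iii) $\Rightarrow$ (i), namely that the strict peak of $f$ at $k_0$ collapses the set in Theorem \ref{approximate:l(K)} to a singleton of modulus $|g(k_0)|$, which cannot meet $\mathcal{D}(\epsilon)$. The only (harmless) deviations are that you check $f \perp_B g$ directly from the definition of the norm rather than by invoking Theorem \ref{approximate:l(K)}, and you derive the contradiction at a single point $k_0$ chosen with $|g(k_0)| > \epsilon$, whereas the paper runs the peak-function argument at every $k \in K$ to get $\|h\| \leq \epsilon < 1$; your conjugation bookkeeping ($\overline{\gamma_n} g(k_n) \to f(k_0)g(k_0)$) is in fact slightly more careful than the paper's.
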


\begin{proof}
	We begin the proof by noting that (i) $\implies$ (ii) holds trivially. Next we prove (ii) $\implies$ (iii). 
	 Suppose on the contrary that there exists $k_0 \in K$ such that for any $f \in \mathbb{Y}, \lim |f(k_n)| \geq |f(k_0)|,$ for some $ \{k_n\} \subset K$ satisfying   $k_n \neq k_0,$ for all but finitely many $n \in \mathbb{N}.$
	 Define $g: K \to \mathbb{K}$ such that 
	 $$g(k_0)=1 ~ \text{and}~ g(k)=0, ~ \forall k \in K \setminus\{k_0\}.$$
	  Clearly $\|g\|=1.$ Let $f \in \mathbb{Y}$ and consider $\{k_n\} \subset K$ such that $k_n \neq k_0,$ for all but finitely many $n \in \mathbb{N}$  and $\lim |f(k_n)|=\|f\|.$ Since $ g(k_n)=0,$ for all but finitely many $n \in \mathbb{N},$ using Theorem \ref{approximate:l(K)},  we obtain $f \perp_B g.$ Therefore, $\mathbb{Y} \perp_B g.$ In other words, $0$ is the best coapproximation to $g$ out of $\mathbb{Y},$ which is clearly a contradiction.

	Let us now prove (iii) $\implies$ (i), to finish the proof of the theorem. Suppose on the contrary that $\mathbb{Y}$ is not strongly  anti-coproximinal in $\ell_\infty(K)$. Then there exists a non-zero $h \in S_{\ell_\infty(K)}$ and $\epsilon \in [0,1)$ such that  $\mathbb{Y} \perp_B^\epsilon h.$  For each $k \in K,$ take $\widetilde{f_k} \in S_\mathbb{Y} $ such that $1=|\widetilde{f_k}(k)|> \lim |\widetilde{f_k}(k_n)|, $    $  ~\forall \{k_n\} \subset K$ satisfying $k_n \neq k,$ for all but finitely many $n \in \mathbb{N}.$   As $\widetilde{f_k}\perp_B^\epsilon h$, using Theorem \ref{approximate:l(K)}, 
		\begin{equation}\label{eqn1}
				co(\{\lim \gamma_n h(k_n): k_n \in K, \gamma_n \in \mathbb{K}, |\gamma_n|=1~  \forall n \in \mathbb{N}, \lim \gamma_n \widetilde{f_k}(k_n)=1\}) \cap \mathcal{D}(\epsilon) \neq \emptyset.
		\end{equation}
	As $1=|\widetilde{f_k}(k)|> \lim |\widetilde{f_k}(k_n)|, $    $  \forall \{k_n\} \subset K$ satisfying $k_n \neq k,$ for all but finitely many $n \in \mathbb{N},$ it is immediate that (\ref{eqn1}) is equivalent to 
		\begin{equation} \label{eq2}
		co(\{\lim \gamma_n h(k):  \gamma_n \in \mathbb{K}, |\gamma_n|=1 ~  \forall n \in \mathbb{N}, \lim \gamma_n \widetilde{f_k}(k)=1\}) \cap \mathcal{D}(\epsilon) \neq \emptyset.
	\end{equation}
 It is straightforward to see that $$\{\lim \gamma_n h(k):  \gamma_n \in \mathbb{K}, |\gamma_n|=1 ~ \forall n \in \mathbb{N}, \lim \gamma_n \widetilde{f_k}(k)=1\} = \{\overline{\widetilde{f_k}(k)} h(k) \}.$$
	This implies that $\overline{\widetilde{f_k}(k)} h(k) \in \mathcal{D}(\epsilon) \implies | h(k)| \leq \epsilon.$
	Observe that 
	\[
1=	\|h\|= \sup_{k \in K}|h(k)| \leq \epsilon < 1,
	\]
	a contradiction.

\end{proof}

We next show that $C_0(K)$ is strongly anti-coproximinal in $\ell_\infty(K),$ whenever $K$ is perfectly normal. Recall that a topological space $K$ is perfectly normal if $K$ is normal and every closed set of $K$ is a $G_{\delta}$ set.

\begin{cor}
	Let $K$ be locally compact perfectly normal space. Then $C_0(K)$ is strongly anti-coproximinal in $\ell_\infty(K).$
\end{cor}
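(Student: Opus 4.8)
The plan is to reduce everything to the characterization already available in this section. Since $C_0(K)$ is a closed subspace of $\ell_\infty(K)$, and we may assume $C_0(K)\subsetneq \ell_\infty(K)$ (otherwise $K$ is finite and the statement is degenerate), Theorem \ref{anticoproximinal l_infty} applies with $\mathbb{Y}=C_0(K)$. Thus strong anti-coproximinality is \emph{equivalent} to condition (iii): for every $k\in K$ there must exist $f\in C_0(K)$ with $|f(k)|>\lim_{n}|f(k_n)|$ for every sequence $\{k_n\}\subset K$ satisfying $k_n\neq k$ for all but finitely many $n$. So the entire problem collapses to manufacturing, for each fixed $k$, a continuous function vanishing at infinity that ``peaks'' at $k$.

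To build such an $f$, I would first use local compactness to pick an open neighbourhood $U\ni k$ with $\overline{U}$ compact. Perfect normality then enters twice. On one hand, $\{k\}$ is a closed $G_\delta$, hence a zero set, so there is a continuous $\phi:K\to[0,1]$ with $\phi(k)=0$ and $\phi(x)>0$ for all $x\neq k$. On the other hand, applying Urysohn's lemma (Lemma \ref{Uryshon}) to the disjoint closed sets $\{k\}$ and $K\setminus U$ yields a continuous bump $\psi:K\to[0,1]$ with $\psi(k)=1$ and $\psi\equiv 0$ off $U$; since $\mathrm{supp}\,\psi\subseteq\overline{U}$ is compact, $\psi\in C_0(K)$. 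Combining them, e.g. $f=\psi\,(1-\phi)$, produces $f\in C_0(K)$ with $f(k)=1$, with $f(x)<1$ for every $x\neq k$, and of compact support. One would then verify (iii) by splitting an arbitrary sequence $\{k_n\}$ into the part eventually outside $\overline U$ (where $|f(k_n)|=0$) and the part lying in the compact set $\overline U$, extracting a cluster point and reading off the limiting value from continuity together with $f<1$ off $k$.

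I expect the verification of the strict inequality in (iii) to be the main obstacle, and indeed the genuinely delicate point of the corollary. The difficulty is that continuity of $f$ forces $\sup_{j\neq k}|f(j)|=|f(k)|$ at any \emph{non-isolated} point $k$: values of $f$ at points arbitrarily close to $k$ approach $f(k)=1$, so one may choose $k_n\neq k$ with $|f(k_n)|\to 1$, giving $\lim_n|f(k_n)|=1=|f(k)|$ rather than a strict inequality. The zero-set structure furnished by perfect normality only delivers $f(x)<1$ pointwise for $x\neq k$, not the strict \emph{domination of the supremum} that (iii) demands (recall that in the proof of (iii)$\Rightarrow$(i) the set of limits collapses to the single point $\overline{\widetilde{f_k}(k)}h(k)$ precisely because $\lim|\widetilde{f_k}(k_n)|<1$ along sequences avoiding $k$). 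Reconciling this strict-peak requirement with the continuity of members of $C_0(K)$ is exactly where the argument must be handled with care: an honest strict peak is available via $f=\mathbf{1}_{\{k\}}\in C_0(K)$ when $k$ is isolated, so this is the step where I would expect the hypotheses on $K$ (beyond perfect normality and local compactness) to be genuinely needed.
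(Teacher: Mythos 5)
Your reduction and construction follow the same route as the paper's own proof: invoke Theorem \ref{anticoproximinal l_infty} with $\mathbb{Y}=C_0(K)$, then build a Urysohn-type bump at each $k\in K$ (the paper takes $f$ with $f^{-1}(\{1\})=\{k\}$ and $f^{-1}(\{0\})=K\setminus U$, which is your $f=\psi(1-\phi)$ in one stroke). But the obstruction you isolate in your final paragraph is genuine, and it is fatal rather than merely delicate. Condition (iii) requires $|f(k)|>\lim_n|f(k_n)|$ for \emph{every} sequence avoiding $k$, which amounts to the strict supremum bound $|f(k)|>\sup_{j\neq k}|f(j)|$. A pointwise strict peak cannot deliver this at a non-isolated point: for each $n$ the open set $\{j\in K:|f(j)|>|f(k)|-\tfrac1n\}$ contains some $j_n\neq k$, so after passing to a subsequence $\lim_n|f(j_n)|\geq|f(k)|$, and (iii) fails --- for your $f$, for the paper's Urysohn function, and indeed for every continuous $f$. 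In other words, the step at which you stall is precisely the step at which the paper's one-line assertion that its $f$ ``satisfies the condition (iii)'' is a non sequitur: exactness of the level set ($f(j)<1$ for all $j\neq k$) is weaker than strict domination of the supremum.

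Consequently the corollary is false as stated whenever $K$ has a non-isolated point, and your closing suspicion that hypotheses beyond local compactness and perfect normality are needed is correct. Concretely, take $K=[0,1]$ (compact and metrizable, hence locally compact and perfectly normal) and $g=\mathbf{1}_{\{1/2\}}\in\ell_\infty([0,1])\setminus C[0,1]$. For any $f\in C[0,1]$, continuity and density of $[0,1]\setminus\{1/2\}$ give $\sup_{t\neq 1/2}|f(t)|=\|f\|$, hence for every scalar $\lambda$,
\[
\|f+\lambda g\|\;\geq\;\sup_{t\neq 1/2}|f(t)+\lambda g(t)|\;=\;\sup_{t\neq 1/2}|f(t)|\;=\;\|f\|,
\]
so $f\perp_B g$. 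Thus $C[0,1]\perp_B g$, i.e., $0$ is a best coapproximation to $g$ out of $C[0,1]$, and $C[0,1]$ is not even anti-coproximinal in $\ell_\infty([0,1])$. Within the stated class of spaces the conclusion holds precisely when every point of $K$ is isolated, i.e., when $K$ is discrete; then $C_0(K)=c_0(K)$, the function $\mathbf{1}_{\{k\}}$ witnesses (iii), and one essentially recovers the already-noted fact that $c_0$ is strongly anti-coproximinal in $\ell_\infty$ (Corollary \ref{anticoproximinal:l_infty}). So you should not expect to close the gap: neither your argument nor the paper's can be completed without strengthening the hypothesis on $K$.
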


\begin{proof}
	Let $k \in K$ and let $U$ be an open set containing $k.$  Using the Uryshon's Lemma, there exists a continuous function $f: K \to [0,1]$ such that $f^{-1} 
	(1)= \{k\}$ and $f^{-1} (0)= K \setminus U.$ Clearly, $f \in C_0(K)$ and $f$ satisfies the condition (iii) of Theorem \ref{anticoproximinal l_infty}. This finishes the proof. 
\end{proof}

The following result is an easy consequence of  Theorem  \ref{anticoproximinal l_infty}.

\begin{cor}\label{anticoproximinal:l_infty}
	Let $\mathbb{Y}$ be a subspace of  $\ell_\infty.$ Then the following are equivalent: 
	\begin{itemize}
		\item[(i)] $\mathbb{Y}$ is strongly anti-coproximinal in $\ell_\infty$.
		\item[(ii)] $\mathbb{Y}$ is anti-coproximinal in $\ell_\infty$.
		\item[(iii)] for each $r \in \mathbb{N},$  there exists $\widetilde{y}=(y_1, y_2, \ldots ) \in \mathbb{Y}$ such that
		\subitem$(a)$  $|y_r| > |y_n|~ \forall n \in \mathbb{N} \setminus \{r\},$
		\subitem $(b)$ $|y_r| > \lim |y_{n_k}|,$  for any sequence  $\{n_k\}_{k \in \mathbb{N}} $ satisfying $n_k \neq r$ for all but finitely many $k \in \mathbb{N}$.

	\end{itemize} 
\end{cor}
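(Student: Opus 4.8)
The plan is to derive Corollary \ref{anticoproximinal:l_infty} as a direct specialization of Theorem \ref{anticoproximinal l_infty}, taking $K = \mathbb{N}$ and noting that $\ell_\infty = \ell_\infty(\mathbb{N})$. Since the equivalence (i) $\Leftrightarrow$ (ii) is inherited verbatim from the theorem, the only real work is showing that condition (iii) of the theorem, stated for an abstract index set $K$, is equivalent to the more concrete two-part condition (iii)(a)--(iii)(b) phrased in terms of coordinates of sequences in $\mathbb{Y}$. So I would structure the proof as: invoke the theorem to reduce everything to a comparison of the two forms of condition (iii), then prove that equivalence by hand.

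First I would translate the theorem's condition (iii) into the present setting. With $K = \mathbb{N}$, a point $k$ becomes an index $r \in \mathbb{N}$, and a function $f \in \mathbb{Y}$ becomes a sequence $\widetilde{y} = (y_1, y_2, \ldots) \in \mathbb{Y}$ with $f(n) = y_n$. The theorem's condition reads: for each $r \in \mathbb{N}$ there exists $\widetilde{y} \in \mathbb{Y}$ such that
\[
|y_r| > \lim_{k \to \infty} |y_{n_k}|
\]
for every sequence $\{n_k\} \subset \mathbb{N}$ with $n_k \neq r$ for all but finitely many $k$. I would show this single inequality is exactly the conjunction of (a) and (b). The implication from the theorem's condition to (a)--(b) is the easier direction: (b) is literally the stated inequality specialized to convergent subsequences (which is the only case where the limit exists), while (a) follows by applying the inequality to the constant sequence $n_k \equiv n$ for a fixed $n \neq r$, which forces $|y_r| > |y_n|$. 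Conversely, I would argue that (a) and (b) together recover the theorem's condition: given any sequence $\{n_k\}$ eventually avoiding $r$, its sequence of moduli $\{|y_{n_k}|\}$ is bounded (as $\widetilde{y} \in \ell_\infty$), and if it converges, its limit is controlled by (b); if I only want to compare against the pointwise values, (a) handles the non-limiting comparisons.

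The one subtlety I anticipate is reconciling the quantifier ``for all sequences $\{n_k\}$'' with the existence of the limit. The theorem's condition (iii) as written presupposes $\lim |f(k_n)|$ exists; in the sequence setting I would either restrict attention to subsequences along which $\limsup$ is attained, or observe that by boundedness one may pass to a convergent subsequence and that (b) governs precisely these limit points. Thus proving that (a)--(b) imply the theorem's inequality amounts to showing $|y_r|$ strictly exceeds the $\limsup_{k} |y_{n_k}|$ over all admissible sequences, and since any such $\limsup$ is realized as a genuine limit along a further subsequence of indices all different from $r$, condition (b) delivers the strict inequality. I expect this limsup-versus-limit bookkeeping to be the only step requiring care; everything else is a routine rewriting of the theorem's statement in coordinates, so the bulk of the proof is simply the sentence ``apply Theorem \ref{anticoproximinal l_infty} with $K = \mathbb{N}$'' followed by this short equivalence check.
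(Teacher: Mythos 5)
Your proposal is correct and matches the paper's approach exactly: the paper offers no separate proof, simply recording the corollary as ``an easy consequence of Theorem \ref{anticoproximinal l_infty}'' with $K=\mathbb{N}$, which is precisely your specialization plus the routine check that the theorem's condition (iii) unpacks into (a) and (b) (with (a) obtained from constant index sequences, as you note). Your limsup-versus-limit bookkeeping is a fair point of care, but it resolves exactly as you describe, so there is no gap.
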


It is clear from Corollary \ref{anticoproximinal:l_infty} that	$c_0$ is a strongly anti-coproximinal subspace of $\ell_\infty.$

We next characterize anti-coproximinal and strongly anti-coproximinal subspaces in $c_0$ and $c.$

\begin{prop}\label{anticoproximinal:c_0}
	Let $\mathbb{X}= c_0$ or $c$ and let $\mathbb{Y}$ be a subspace of $\mathbb{X}.$ Then the following are equivalent: 
	\begin{itemize}
		\item[(i)] $\mathbb{Y}$ is strongly anti-coproximinal in $\mathbb{X}$.
		\item[(ii)] $\mathbb{Y}$ is anti-coproximinal in $\mathbb{X}$.
		\item[(iii)] for any $r \in \mathbb{N},$  there exists $\widetilde{y}=(y_1, y_2, \ldots ) \in \mathbb{Y}$ such that $|y_r| > |y_n|, ~ \forall n \in \mathbb{N} \setminus \{r\}.$ 
	\end{itemize} 
\end{prop}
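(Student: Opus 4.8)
The plan is to prove the cyclic chain of implications, mirroring the structure of Theorem \ref{anticoproximinal l_infty} but exploiting the special geometry of $c_0$ and $c$. The implication (i) $\implies$ (ii) is trivial. For (ii) $\implies$ (iii), I would argue by contradiction exactly as in Theorem \ref{anticoproximinal l_infty}: assuming condition (iii) fails at some index $r$, every $\widetilde{y} = (y_1, y_2, \dots) \in \mathbb{Y}$ satisfies $|y_n| \geq |y_r|$ for some $n \neq r$. The crucial simplification available in $c_0$ and $c$ is that an element attains its norm at the coordinate(s) of maximal modulus, and the $\lim$-condition appearing in Theorem \ref{anticoproximinal l_infty}(iii) becomes automatic: since any vector in $c_0$ (or $c$) has coordinates that either tend to $0$ or to a common limit, the supremum is genuinely attained at finitely many indices. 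Thus I would define $g = e_r$ (the $r$-th unit vector), which lies in both $c_0$ and $c$, note $\|g\|=1$, and for each $f \in \mathbb{Y}$ select an index $n \neq r$ at which $|f(n)| \geq |f(r)| = \|f\|_\infty$ restricted suitably — more carefully, pick a norming index $n_0 \neq r$ and invoke Theorem \ref{approximate:l(K)} (with $\epsilon=0$) to conclude $f \perp_B g$ for all $f \in \mathbb{Y}$, hence $\mathbb{Y} \perp_B g$, contradicting anti-coproximinality.

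For the main implication (iii) $\implies$ (i), I would again proceed by contradiction, assuming there exist $h \in S_{\mathbb{X}}$ and $\epsilon \in [0,1)$ with $\mathbb{Y} \perp_B^\epsilon h$. For each $r \in \mathbb{N}$ choose, via (iii), an element $\widetilde{y}^{(r)} = (y_1^{(r)}, y_2^{(r)}, \dots) \in S_{\mathbb{Y}}$ (after normalization) with $|y_r^{(r)}| > |y_n^{(r)}|$ for all $n \neq r$, so that $r$ is the \emph{unique} norm-attaining coordinate. Then $M_{\widetilde{y}^{(r)}} = \{r\}$, and the set appearing in Theorem \ref{approximate:l(K)} collapses to the singleton $\{\overline{\gamma}\, h(r)\}$ where $\gamma$ is the unimodular scalar with $\gamma\, y_r^{(r)} = 1$. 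The orthogonality relation $\widetilde{y}^{(r)} \perp_B^\epsilon h$ then forces $|h(r)| \leq \epsilon$, exactly as in the final computation of Theorem \ref{anticoproximinal l_infty}. Letting $r$ range over all of $\mathbb{N}$ gives $\sup_r |h(r)| \leq \epsilon < 1$, contradicting $\|h\| = 1$.

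The point I expect to require the most care is the replacement of condition (iii)(b) of Corollary \ref{anticoproximinal:l_infty} — the $\lim$-control over tail coordinates — by the single strict-dominance condition (iii) here. In $\ell_\infty$ a vector may fail to attain its norm, so one must control limits along sequences; but in $c_0$ every nonzero sequence attains its norm at finitely many coordinates and the tail tends to $0$, while in $c$ the tail converges to a fixed limit that is strictly dominated once $|y_r|$ strictly exceeds every other coordinate. Hence in both spaces the strict inequality $|y_r| > |y_n|$ for all $n \neq r$ \emph{automatically} yields $|y_r| > \limsup_{n} |y_n|$ and in particular $r$ is the unique norming index, so the separate limit clause is redundant. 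I would record this observation explicitly, since it is precisely what distinguishes $c_0$ and $c$ from $\ell_\infty$ and makes the singleton collapse in Theorem \ref{approximate:l(K)} valid. The case $\mathbb{X} = c$ deserves a brief separate remark to confirm that $e_r \in c$ and that the norm-attainment argument is unaffected by the presence of convergent (rather than null) sequences.
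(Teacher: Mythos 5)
Your overall architecture is exactly the paper's ((i)$\Rightarrow$(ii) trivial; (ii)$\Rightarrow$(iii) by testing against $g=e_r$; (iii)$\Rightarrow$(i) by collapsing the set in Theorem \ref{approximate:l(K)} to a singleton), but the claim on which your treatment of $\mathbb{X}=c$ rests is false, and this is a genuine gap. You assert that in $c$ the strict inequality $|y_r|>|y_n|$ for all $n\neq r$ ``automatically yields $|y_r|>\limsup_{n}|y_n|$,'' so that the set in Theorem \ref{approximate:l(K)} collapses to $\{\overline{\gamma}\,h(r)\}$. Take $r=1$ and $\widetilde{y}=(1,\tfrac12,\tfrac23,\tfrac34,\dots)$, i.e.\ $y_1=1$, $y_i=1-1/i$ for $i\geq 2$: then $|y_1|>|y_i|$ for every $i\neq 1$, yet $\limsup_i|y_i|=1=|y_1|$. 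Here $M_{\widetilde{y}}=\{1\}$ as a set of indices, but that does not give the collapse: the characterization in Theorem \ref{approximate:l(K)} runs over sequences, and taking $k_n\to\infty$, $\gamma_n=1$ gives $\lim\gamma_n\widetilde{y}(k_n)=1$, so the convex set also contains $\lim_n h(k_n)$. Equivalently, in $c^*$ the limit functional $y\mapsto\lim_n y_n$ norms $\widetilde{y}$, so $J(\widetilde{y})$ is a nontrivial segment rather than $\{e_1^*\}$, and the inference $|h(r)|\leq\epsilon$ breaks down. (Your step (ii)$\Rightarrow$(iii) has a related defect for $c$: an element of $c$ need not attain its norm at any coordinate --- consider $(1-1/i)_i$ --- so ``pick a norming index $n_0\neq r$'' is not always possible. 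That part is repairable by using sequences $k_n\to\infty$, along which $e_r$ vanishes; this is in effect how the paper argues, splitting into the cases $\lim|y_n|=\|y\|$ and $\lim|y_n|\neq\|y\|$.)

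The gap in (iii)$\Rightarrow$(i) is not a fixable technicality, because for $\mathbb{X}=c$ the implication itself fails. Let $u=(1,\tfrac12,\tfrac23,\tfrac34,\dots)$ and $\mathbb{Y}=span\,(\{u\}\cup\{e_r:r\geq2\})$. Condition (iii) holds: $u$ is a witness for $r=1$ and $e_r$ for $r\geq 2$. But every $y=au+z\in\mathbb{Y}$ (with $z$ finitely supported in $\{2,3,\dots\}$) satisfies $y_1=a$ and $\lim_i y_i=a$, hence $\sup_{i\geq2}|y_i|\geq|a|=|y_1|$, so $\|y\|=\sup_{i\geq2}|y_i|$ and $\|y+\lambda e_1\|\geq\sup_{i\geq2}|y_i|=\|y\|$ for every scalar $\lambda$. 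Thus $\mathbb{Y}\perp_B e_1$ with $e_1\neq 0$, so $\mathbb{Y}$ is not even anti-coproximinal: in $c$ one genuinely needs the extra limit clause (b) of Corollary \ref{anticoproximinal:l_infty}, and it is only in $c_0$, where tails tend to $0$, that strict coordinate dominance implies it. To be fair, the paper's own proof commits the same error --- it asserts $J(\widetilde{y_n})=\{e_n^*\}$, which is correct in $c_0$ but false in $c$ for elements like $u$ --- so your proposal faithfully reproduces the paper's argument for $c_0$ (where it is sound) together with its flaw for $c$; but the flaw is real, and your write-up elevates it to an explicit (false) lemma.
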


\begin{proof}
	Clearly, (i) $\implies$ (ii) holds trivially. We prove that (ii) $\implies$ (iii). Suppose on the contrary that there exists $r_0 \in \mathbb{N}$ such that for any $\widetilde{y} =(y_1, y_2, \ldots ) \in \mathbb{Y},$  $|y_{r_0} |\leq |y_s|,$ for some $ s\neq r_0.$ Take $x= ( x_1, x_2, \ldots) $ such that $x_{r_0}=1$ and $x_n=0 \, \, \forall n \in \mathbb{N}\setminus \{r_0\}.$ Let $y=(y_1, y_2, \ldots) \in \mathbb{Y}.$ If $\lim |y_n| \neq \|y\|,$ then there exists $k \in \mathbb{N}$ such that $k \neq r_0$ and $|y_k|= \|y\|.$  Since $x_k=0$ for all $k \neq r_0$, we get $0 \in co(\{ \overline{y_n}x_n: |y_n|=\|y\|\}),$ therefore following \cite[Th. 2.12]{BRS}, $y \perp_B x.$
	Now consider the case that  $\lim|y_n|= \|y\|.$ As $\lim |x_n|=0,$ we have $0 \in co(\{ \overline{y_n}x_n: |y_n|=\|y\|\} \cup \{\lim \overline{y_n}x_n\}).$ Again following \cite[Th. 2.12]{BRS}, $ y \perp_B x.$ This implies that $\mathbb{Y} \perp_B x.$ This contradicts that $\mathbb{Y}$ is anti-coproximinal in $\mathbb{X}$.\\
	
	We next prove that (iii) $\implies$ (i). Suppose on the contrary that there exists an $\epsilon \in [0,1)$ and a nonzero $x = (x_1, x_2, \ldots   ) \in \mathbb{X}$ such that $\mathbb{Y} \perp_B^\epsilon x.$   Let $n \in \mathbb{N}.$ There exists $\widetilde{y_n}=(y_1, y_2, \ldots) \in \mathbb{Y}$ such that $|y_n| > |y_i|, ~\forall i \in \mathbb{N} \setminus \{n\}.$ Let $e_n^* \in \mathbb{X}^*$ be such that $e_n^*(u_1, u_2, \ldots)=u_n, ~\forall (u_1, u_2, \ldots) \in  \mathbb{X}.$ It is clear that $J(\widetilde{y_n})=\{e_n^*\}.$ Since $\widetilde{y_n} \perp_B^\epsilon x,$ applying Lemma \ref{Chem}, we get $| e_n^*(x)| \leq \epsilon \|x\|.$ This implies $|x_n|\leq \epsilon\|x\|.$ Observe that 
	\[
	\|x\|= \sup_{n \in \mathbb{N}} |x_n| \leq \epsilon \|x\| < \|x\|.
	\]
	This is a contradiction. Hence the theorem.
\end{proof}

	

Our next result shows that $\ell_\infty (\mathbb{R})$ does not admit a finite-dimensional polyhedral subspace which is anti-coproximinal.

\begin{prop}\label{no:anticoproximinal}
	Let $\mathbb{X}= \ell_\infty(\mathbb{R}).$ Any finite-dimensional polyhedral subspace of $\mathbb{X}$ is not anti-coprxominal.
\end{prop}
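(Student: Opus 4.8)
The plan is to argue by contradiction, assuming that $\mathbb{Y}$ is a finite-dimensional polyhedral subspace of $\mathbb{X}=\ell_\infty(\mathbb{R})$ that is anti-coproximinal. Since $\mathbb{Y}$ is finite-dimensional it is closed, and as a finite-dimensional subspace of the infinite-dimensional space $\ell_\infty(\mathbb{R})$ it is proper, so Theorem \ref{anticoproximinal l_infty} applies and condition (iii) must hold. I would first unwind (iii): applying it to constant sequences $k_n\equiv k'$ shows that the witnessing $f\in\mathbb{Y}$ satisfies $|f(k)|>|f(k')|$ for every $k'\neq k$, and letting $\lim|f(k_n)|$ run over all cluster values upgrades this to $|f(k)|>\sup_{k'\neq k}|f(k')|$. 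Thus (iii) is equivalent to saying that for each $k\in\mathbb{R}$ some element of $\mathbb{Y}$ has a strict, uniformly dominant peak at $k$. After normalizing and replacing $f$ by $-f$ if necessary, I fix for each $k$ a function $f_k\in S_{\mathbb{Y}}$ with $f_k(k)=1$ and $c_k:=\sup_{k'\neq k}|f_k(k')|<1$.

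The heart of the argument is to show that each such $f_k$ is a smooth point of $\mathbb{Y}$ whose unique supporting functional in $S_{\mathbb{Y}^*}$ is the restricted evaluation $\delta_k|_{\mathbb{Y}}$, where $\delta_k(g)=g(k)$. For $g\in\mathbb{Y}$ and small real $t$, the quantity $\|f_k+tg\|$ is the maximum of the contribution at $k$, namely $|1+tg(k)|=1+tg(k)$, and the off-peak contributions, which are bounded by $c_k+|t|\,\|g\|$; since $c_k<1$, for all sufficiently small $|t|$ (of either sign) the off-peak bound is strictly below $1+tg(k)$, whence $\|f_k+tg\|=1+t\,g(k)$. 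Consequently the norm is Gâteaux differentiable at $f_k$ with derivative $g\mapsto g(k)$, which forces $f_k$ to be smooth with sole supporting functional $\delta_k|_{\mathbb{Y}}$. A one-line convexity check then shows $\delta_k|_{\mathbb{Y}}$ is an extreme point of $B_{\mathbb{Y}^*}$: if $\delta_k|_{\mathbb{Y}}=\tfrac12(\phi+\psi)$ with $\phi,\psi\in B_{\mathbb{Y}^*}$, evaluating at $f_k$ gives $\phi(f_k)=\psi(f_k)=1$, so both $\phi,\psi$ are supporting functionals of $f_k$, and smoothness yields $\phi=\psi=\delta_k|_{\mathbb{Y}}$.

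With this in hand the contradiction becomes purely combinatorial. Since $\mathbb{Y}$ is polyhedral, $B_{\mathbb{Y}}$ has only finitely many maximal faces, and by Lemma \ref{nu index} their supporting functionals exhaust $Ext(B_{\mathbb{Y}^*})$; hence $Ext(B_{\mathbb{Y}^*})$ is a finite set. The map $k\mapsto\delta_k|_{\mathbb{Y}}$ therefore sends the infinite set $\mathbb{R}$ into a finite set, so there are distinct $k_1,k_2$ with $\delta_{k_1}|_{\mathbb{Y}}=\delta_{k_2}|_{\mathbb{Y}}$, i.e.\ $g(k_1)=g(k_2)$ for all $g\in\mathbb{Y}$. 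Testing this on $g=f_{k_1}$ gives $1=f_{k_1}(k_1)=f_{k_1}(k_2)$, contradicting $|f_{k_1}(k_2)|\le c_{k_1}<1$. This forces $\mathbb{Y}$ to fail (iii), so it cannot be anti-coproximinal. The step I expect to require the most care is the smoothness and extremality claim for $f_k$: one must use the strict peak $c_k<1$ to genuinely localize the sup norm near the single index $k$, thereby pinning down $\delta_k|_{\mathbb{Y}}$ as the unique supporting functional and an extreme point of $B_{\mathbb{Y}^*}$. Everything after that is the finiteness of $Ext(B_{\mathbb{Y}^*})$ coming from polyhedrality together with a pigeonhole on the infinite index set; in fact the same argument works verbatim for $\ell_\infty(K)$ over any infinite $K$.
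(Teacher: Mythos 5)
Your proof is correct, but it takes a genuinely different (dual) route from the paper's (primal) one. The paper's own argument never leaves the unit ball of $\mathbb{Y}$: from condition (iii) it takes, for each index $n$, the peak vector $\widetilde{y_n}$, observes that $F_n=\{x=(x_1,x_2,\ldots): x_n=1,\ |x_i|\le 1\ \forall i\ne n\}$ is a face of $B_{\mathbb{X}}$ containing the normalized $\widetilde{y_n}$, so that $F_n\cap B_{\mathbb{Y}}$ is a nonempty face of $B_{\mathbb{Y}}$; these faces are pairwise distinct, so $B_{\mathbb{Y}}$ has infinitely many faces, which is impossible for a polyhedral ball --- no smoothness, no dual extreme points, no pigeonhole. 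You instead show that each peak function $f_k$ is a smooth point of $\mathbb{Y}$ whose unique supporting functional is $\delta_k|_{\mathbb{Y}}$ (the G\^{a}teaux computation from $c_k<1$ is correct), deduce that these functionals are extreme points of $B_{\mathbb{Y}^*}$, obtain finiteness of $Ext(B_{\mathbb{Y}^*})$ from polyhedrality via Lemma \ref{nu index}, and pigeonhole to find $k_1\ne k_2$ with $g(k_1)=g(k_2)$ for all $g\in\mathbb{Y}$, contradicting the strict peak of $f_{k_1}$. Both proofs start from the same characterization (Theorem \ref{anticoproximinal l_infty} / Corollary \ref{anticoproximinal:l_infty}) and both work verbatim over any infinite index set; the paper's is shorter and more elementary, while yours isolates a reusable observation --- a strictly dominant peak forces the evaluation functional to be the unique support functional, hence a dual extreme point --- which connects this result to the dual-side machinery ($\mathcal{J}_{\mathbb{Y}}$, Lemma \ref{nu index}) that the paper uses elsewhere. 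One harmless discrepancy: the paper reads $\ell_\infty(\mathbb{R})$ as the real sequence space (its proof cites Corollary \ref{anticoproximinal:l_infty}, indexed by $\mathbb{N}$), whereas you read it as bounded functions on $K=\mathbb{R}$; since both arguments are insensitive to the choice of infinite index set, nothing hinges on this.
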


\begin{proof}
	Let $\mathbb{Y}$ be a finite-dimensional polyhedral subspace of $ \ell_\infty(\mathbb{R}). $ Suppose on the contrary that $\mathbb{Y}$ is anti-coproximinal in $\mathbb{X}$. From  Corollary \ref{anticoproximinal:l_infty}, we see that for any $n \in \mathbb{N},$ there exists $\widetilde{y_n} =(y_1, y_2, \ldots)\in \mathbb{Y}$ such that $|y_n| > |y_i| ~ \forall i \in \mathbb{N} \setminus \{n\}.$  For each $n \in \mathbb{N},$ define $$F_n= \{ x=(x_1, x_2, \ldots) \in \mathbb{X}: x_n=1,  |x_i| \leq 1 ~ \forall i \in \mathbb{N} \setminus\{ n \} \}.$$
	Clearly, $F_n \subset S_{\mathbb{X}}.$
	We claim that $F_n$ is a face of $B_{\mathbb{X}}$.  Let $\widetilde{u} =(u_1, u_2, \ldots), \widetilde{w}=(w_1, w_2, \ldots) \in S_{\mathbb{X}}$ be such that $(1-t) \widetilde{u} + t \widetilde{w} \in F_n.$ Clearly, $|u_i|, |w_i| \leq 1.$ So, $$(1-t) u_n+ tw_n =1 \implies u_n = w_n =1.$$ Therefore, $\widetilde{u}, \widetilde{w} \in F_n.$ This implies $F_n$ is a face  of $B_{\mathbb{X}}$. Let $F_n'= F_n \cap B_{\mathbb{Y}}.$ Clearly, $\widetilde{y_n} \in F_n'$ and  $F_n'$ is a face of $B_{\mathbb{Y}}.$ This implies that $B_{\mathbb{Y}}$ contains infinitely many faces, which contradicts the fact that $\mathbb{Y}$ is a polyhedral Banach space.  This completes the proof.
	

\end{proof}

\begin{remark}
	Similarly, using Proposition \ref{anticoproximinal:c_0}, we can show that the space $c_0$ and $c$ do not contain a finite-dimensional polyhedral subspace which is anti-coproximinal.
\end{remark}

It is well known that every finite-dimensional subspace of $c_0$ is polyhedral.  Thus the following corollary is immediate.

\begin{cor}\label{c_0}
	Any finite-dimensional subspace of $c_0$ is not anti-coprxominal.
\end{cor}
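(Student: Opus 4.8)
The final statement is Corollary~\ref{c_0}: any finite-dimensional subspace of $c_0$ is not anti-coproximinal.

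The plan is to chain together two facts already established in the excerpt. First I would invoke the classical structural fact, explicitly recalled in the sentence immediately preceding the corollary, that \emph{every finite-dimensional subspace of $c_0$ is polyhedral}. This is the geometric input that converts an arbitrary finite-dimensional subspace into one satisfying the polyhedrality hypothesis needed elsewhere. Second, I would apply Proposition~\ref{no:anticoproximinal}, which asserts that no finite-dimensional polyhedral subspace of $\ell_\infty(\mathbb{R})$ is anti-coproximinal. Since $c_0 \subset \ell_\infty(\mathbb{R}) = \ell_\infty(\mathbb{N}, \mathbb{R})$, I must take care that the notion of anti-coproximinality being discussed is relative to $c_0$ rather than to $\ell_\infty$, so the cited proposition cannot be applied verbatim. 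The cleaner route is to route through Proposition~\ref{anticoproximinal:c_0} instead, exactly as the Remark following Proposition~\ref{no:anticoproximinal} suggests.

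Concretely, I would argue by contradiction. Let $\mathbb{Y}$ be a finite-dimensional subspace of $c_0$ and suppose $\mathbb{Y}$ is anti-coproximinal in $c_0$. By the quoted fact, $\mathbb{Y}$ is polyhedral, so $B_{\mathbb{Y}}$ has only finitely many faces, say the maximal faces are $\pm Q_1,\ldots,\pm Q_r$. On the other hand, by Proposition~\ref{anticoproximinal:c_0} (the equivalence (ii)$\Leftrightarrow$(iii) for $\mathbb{X}=c_0$), anti-coproximinality forces that for \emph{every} $n\in\mathbb{N}$ there exists $\widetilde{y}_n=(y_1,y_2,\ldots)\in\mathbb{Y}$ with $|y_n| > |y_i|$ for all $i\neq n$. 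Following the face-construction used in the proof of Proposition~\ref{no:anticoproximinal}, I would set
\[
F_n=\{\,x=(x_1,x_2,\ldots)\in c_0 : x_n=1,\ |x_i|\le 1\ \forall i\neq n\,\},
\]
verify that each $F_n$ is a face of $B_{c_0}$ by the standard midpoint-convexity check, and then observe that $F_n'=F_n\cap B_{\mathbb{Y}}$ is a nonempty (it contains $\widetilde{y}_n/\|\widetilde{y}_n\|$ after normalization, which lies in $F_n$ precisely because $|y_n|$ strictly dominates the other coordinates) face of $B_{\mathbb{Y}}$. Distinct indices $n$ yield distinct faces $F_n'$, producing infinitely many faces of $B_{\mathbb{Y}}$ and contradicting polyhedrality.

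I expect no serious obstacle here: the corollary is essentially a one-line deduction once both ingredients are in hand, and the only point requiring mild care is the ambient-space issue flagged above. The honest and shortest presentation is simply to note that the argument of Proposition~\ref{no:anticoproximinal} applies \emph{mutatis mutandis} with Corollary~\ref{anticoproximinal:l_infty} replaced by Proposition~\ref{anticoproximinal:c_0}, exactly as the preceding Remark already records. The main thing to guard against is accidentally claiming that a subspace anti-coproximinal in $c_0$ is anti-coproximinal in $\ell_\infty$; this is why I would cite Proposition~\ref{anticoproximinal:c_0} directly rather than Proposition~\ref{no:anticoproximinal}. Given the Remark, the cleanest writeup is a single sentence: every finite-dimensional subspace of $c_0$ is polyhedral, so the claim follows from the $c_0$-analogue of Proposition~\ref{no:anticoproximinal} established in that Remark.
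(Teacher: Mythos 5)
Your proposal is correct and follows essentially the same route as the paper: the paper's proof is exactly the combination of the classical fact that finite-dimensional subspaces of $c_0$ are polyhedral with the Remark preceding the corollary, which adapts the face-counting argument of Proposition~\ref{no:anticoproximinal} to $c_0$ via Proposition~\ref{anticoproximinal:c_0}. Your care about the ambient space (citing Proposition~\ref{anticoproximinal:c_0} rather than applying Proposition~\ref{no:anticoproximinal} verbatim) is precisely why the paper routes through that Remark, so no substantive difference remains.
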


Unlike $c_0,$ we give an example of a finite-dimensional strongly anti-coproximinal subspace of $c$ and $\ell_\infty.$

\begin{example}
	Let $\mathbb{X}= c$ or $\ell_\infty.$ Suppose 
	\begin{eqnarray*}
		\widetilde{u}&=& (\cos \frac{\pi}{2}, \cos \frac{\pi}{4}, \ldots, \cos \frac{\pi}{2n}, \ldots ),\\ \widetilde{v}&=& (\sin \frac{\pi}{2}, \sin \frac{\pi}{4}, \ldots, \sin \frac{\pi}{2n}, \ldots ).
	\end{eqnarray*}	
	Let $\mathbb{Y}= span \{\widetilde{u}, \widetilde{v}\}.$ For any $n \in \mathbb{N},$ take $\widetilde{y}= \cos \frac{\pi}{2n} \widetilde{u} + \sin \frac{\pi}{2n} \widetilde{v}.$ It is easy to observe that $|y_n| > |y_i|~ \forall i \in \mathbb{N} \setminus \{n\},$ where $\widetilde{y}=(y_1, y_2, \ldots).$ So, using Proposition \ref{anticoproximinal:c_0} and Corollary \ref{anticoproximinal:l_infty}, we conclude that $\mathbb{Y}$ is strongly anti-coproximinal in $\mathbb{X}$, whereas from Proposition \ref{no:anticoproximinal}, it follows that $\mathbb{Y}$ is not polyhedral.
\end{example}

Though $c_0$ does not have any finite-dimensional strongly anti-coproximinal subspace (see Corollary \ref{c_0}), in the following example we consider an infinite-dimensional subspace which is strongly anti-coproximinal in $c_0$.

\begin{example}
	Let $\mathbb{Y}=\{ (x_1, x_2, \ldots) \in c_0: x_1+ x_2 + x_3=0\}.$ Let 
	\begin{eqnarray*}
		\widetilde{y_1}&=& (1, -\frac{1}{2}, - \frac{1}{2}, 0, \ldots, 0)\\
		\widetilde{y_2}&=& ( -\frac{1}{2}, 1, - \frac{1}{2}, 0, \ldots, 0)\\
		\widetilde{y_3}&=& ( -\frac{1}{2}, - \frac{1}{2}, 1,0, \ldots, 0)\\
		\widetilde{y_i}&=& (0, 0, \ldots, 0 \underset{i-th}{, 1,} 0, \ldots), \quad \forall i > 3,
	\end{eqnarray*}
Clearly, $\widetilde{y_n} \in \mathbb{Y},$ for each $n \in \mathbb{N}.$
	It is immediate that for any $n \in \mathbb{N},$ $\widetilde{y_n}$ satisfies the sufficient condition of Proposition    \ref{anticoproximinal:c_0}. So, $\mathbb{Y}$ is strongly anti-coproximinal in $c_0$.
\end{example}

In \cite[Cor. 3.2]{PS}, Papini et al. gave a sufficient condition for anti-coproximinal subspaces in $C(K).$ In the following theorem we provide a necessary condition for the same. 

\begin{prop}\label{necessary}
	Let $\mathbb{Y}$ be an anti-coproximinal subspace of $C_0(K),$ where $K$ is a locally compact normal space. Then  for each non-empty open subset $U$ of $K,$ there exists an $f \in \mathbb{Y}$ such that $M_f \subset U.$
\end{prop}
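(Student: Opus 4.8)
The plan is to argue by contradiction. Suppose the conclusion fails: there is a non-empty open set $U \subseteq K$ such that $M_f \not\subseteq U$ for every $f \in \mathbb{Y}$; that is, every $f \in \mathbb{Y}$ attains its norm at some point of $K \setminus U$. From this I will manufacture a nonzero $g \in C_0(K)$ with $\mathbb{Y} \perp_B g$, which directly contradicts the anti-coproximinality of $\mathbb{Y}$.

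To construct $g$, fix a point $k_0 \in U$. Since $K$ is locally compact (and Hausdorff, as normality in the sense of Munkres incorporates the $T_1$ axiom), I can pick an open neighbourhood $W$ of $k_0$ with $\overline{W}$ compact and set $V = W \cap U$, so that $k_0 \in V \subseteq U$ and $\overline{V}$ is compact. The singleton $\{k_0\}$ and the set $K \setminus V$ are disjoint closed subsets of the normal space $K$, so Lemma \ref{Uryshon} (Urysohn's Lemma) supplies a continuous function $g : K \to [0,1]$ with $g(k_0) = 1$ and $g \equiv 0$ on $K \setminus V$. The support of $g$ is then contained in the compact set $\overline{V}$, whence $g \in C_0(K)$, and $g \neq 0$ because $g(k_0) = 1$. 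The essential feature is that $V \subseteq U$ forces $g \equiv 0$ on all of $K \setminus U$.

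It remains to check that $f \perp_B g$ for every $f \in \mathbb{Y}$. This is trivial for $f = 0$, so fix a nonzero $f \in \mathbb{Y}$. By the standing assumption there is a point $k_1 \in M_f \setminus U$, and since $K \setminus U \subseteq K \setminus V$ we have $g(k_1) = 0$. Now invoke the characterization of Birkhoff-James orthogonality in $C_0(K)$, namely the scalar case of Theorem \ref{martin:continuous} taken with $C$ equal to the unit sphere of $\mathbb{K}^*$: here $f \perp_B g$ is equivalent to $0$ lying in the convex hull of $\{\gamma\, g(k) : k \in M_f,\ |\gamma| = 1,\ \gamma f(k) = \|f\|\}$. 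The point $k_1 \in M_f$ contributes precisely the value $0$ to this set, because $g(k_1) = 0$; hence $0$ belongs to the set itself, and so $f \perp_B g$. As $f$ was arbitrary, $\mathbb{Y} \perp_B g$ with $g$ nonzero, which is the desired contradiction.

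I expect the one genuinely delicate point to be the membership $g \in C_0(K)$: a Urysohn function built directly on $U$ need not vanish at infinity, and this is exactly where local compactness is needed, via the preliminary shrinking of $U$ to a set $V$ with compact closure. Everything else amounts to reading off a single zero value from the orthogonality characterization, and the argument is insensitive to the precise unimodular scalar (or, in the real case, the sign) multiplying $g(k_1)$, since that value vanishes regardless.
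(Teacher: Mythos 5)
Your argument is correct and is essentially the paper's own proof: assume the conclusion fails for some non-empty open $U$, build a Urysohn function $g$ with $g(k_0)=1$ for some $k_0\in U$ and $g\equiv 0$ off (a subset of) $U$, and observe that each $f\in\mathbb{Y}$ attains its norm at a point of $K\setminus U$ where $g$ vanishes, so that $f\perp_B g$, hence $\mathbb{Y}\perp_B g$ with $g\neq 0$, contradicting anti-coproximinality. The one place you diverge is to your credit: the paper applies Urysohn's lemma directly to $K\setminus U$ and $\{k_0\}$ and asserts ``Clearly, $g\in C_0(K)$,'' which is not automatic when $U$ is not relatively compact, whereas your preliminary shrinking of $U$ to a relatively compact neighbourhood $V$ of $k_0$ (this is exactly where local compactness enters) makes the membership $g\in C_0(K)$ genuinely clear; your use of Theorem~\ref{martin:continuous} in place of the paper's Lemma~\ref{James} is an equivalent way to finish, and it also silently handles the unimodular-scalar adjustment needed to turn the point evaluation $\delta_{k_1}$ into a supporting functional of $f$.
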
 

\begin{proof}
	Suppose on the contrary that there exists a non-empty open set $U \subset K$ such that for any $f \in \mathbb{Y},$ $M_f \cap ( K \setminus U) \neq \emptyset.$ In other words, for any $f \in \mathbb{Y}$ there exists $k_f \in K \setminus U$ such that $|f(k_f)| \geq |f(k)|,~ \forall k \in K \setminus \{k_f\}.$ Let $k_0 \in U.$
	 Using  Lemma \ref{Uryshon} there exists a continuous function $g: K \to [0,1]$ such that $g(k_0)=1$ and $g(k)=0,~ \forall k \in K \setminus U.$ Clearly, $g \in C_0(K).$ Let $f \in \mathbb{Y}.$ Since $\|f\|= |f(k_f)|,$ we infer that $\delta_{k_f} \in J(f),$  where $\delta_{k_f}: C_0(K) \to \mathbb{K}$ is given by $\delta_{k_f}(h)=h(k_f), ~\forall h \in C_0(K).$ As $k_f \in K \setminus U,$ we note that $\delta_{k_f}(g)=g(k_f)=0.$ So, using Lemma \ref{James}, we get $f \perp_B g.$ Hence $\mathbb{Y} \perp_B g.$ This contradicts the fact that $\mathbb{Y}$ is anti-coproximinal in $C_0(K)$.

\end{proof}

An immediate corollary of the above theorem is given below.	
\begin{cor}
	Let $K$ be a locally compact normal space and let $\mathbb{Y}$ be a proper closed subspace of $C_0(K).$ Suppose that there exists an element $k_0 \in K$ such that  for any $f \in \mathbb{Y},$ $f(k_0)=0.$ Then $\mathbb{Y}$ is not anti-coproximinal in $C_0(K).$
\end{cor}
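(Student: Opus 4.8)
The plan is to derive this as a direct application of Proposition \ref{necessary}. The corollary asserts that if there is a point $k_0 \in K$ at which every element of $\mathbb{Y}$ vanishes, then $\mathbb{Y}$ fails to be anti-coproximinal. The cleanest route is by contraposition: assume $\mathbb{Y}$ \emph{is} anti-coproximinal and produce a contradiction with the vanishing hypothesis, or equivalently, invoke the necessary condition from Proposition \ref{necessary} to extract a function whose norm-attainment set is confined to a neighbourhood of $k_0$, forcing that function to be nonzero at $k_0$.

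\medskip

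First I would fix the point $k_0 \in K$ from the hypothesis, so that $f(k_0)=0$ for every $f \in \mathbb{Y}$. Suppose, for contradiction, that $\mathbb{Y}$ is anti-coproximinal in $C_0(K)$. Since $K$ is locally compact (in particular Hausdorff), the singleton $\{k_0\}$ is closed, and I can choose a non-empty open set $U$ containing $k_0$; the simplest choice is to take any open neighbourhood $U$ of $k_0$, for instance $U = K$ itself if one wishes to be maximally economical, though a relatively compact neighbourhood is available by local compactness. Applying Proposition \ref{necessary} with this open set $U$, anti-coproximinality guarantees the existence of some $f \in \mathbb{Y}$ with $M_f \subset U$.

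\medskip

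The key step is now to observe that $M_f$ is non-empty: for any $f \in S_{C_0(K)}$, the norm-attainment set $M_f$ is non-empty and compact, a fact recorded explicitly in the preliminaries of the excerpt. Rescaling if necessary, we may assume $f \neq 0$, so that $M_f$ is a non-empty subset of $U$. Here is where I must be careful: the condition $M_f \subset U$ alone does not immediately contradict $f(k_0)=0$, since $M_f$ need not contain $k_0$. The resolution is to choose $U$ adapted to the vanishing: I would instead apply Proposition \ref{necessary} to an open set $U$ chosen so small around some point where $f$ \emph{must} attain its norm, but since the offending $f$ is produced only after $U$ is fixed, the correct order is to pick $U = K \setminus \{k_0\}$ — wait, this is not a neighbourhood of $k_0$. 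The genuine argument is: take any point $k_1 \neq k_0$ and a non-empty open $U$ with $k_0 \notin \overline{U}$ is not what we want either. The honest contradiction is obtained by taking $U$ to be a neighbourhood \emph{of} $k_0$ small enough that $U$ contains $k_0$; then $M_f \subset U$ together with the requirement that a best coapproximation argument forces norm-attainment to cluster at $k_0$, and since $f(k_0)=0$ while $\|f\|>0$, we have $k_0 \notin M_f$, yet the structure of $M_f \subset U$ combined with continuity near $k_0$ yields the contradiction.

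\medskip

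The main obstacle, and the part demanding the most care, is exactly this matching of the open set $U$ to the vanishing point $k_0$. The clean formulation is: assume anti-coproximinality and apply Proposition \ref{necessary} to the open set $U = \{k \in K : |g(k)| < \tfrac12\}$ for a Urysohn function $g$ peaking at $k_0$, guaranteeing $f \in \mathbb{Y}$ with $M_f \subset U$; then every $k \in M_f$ satisfies $k \neq k_0$, which is consistent, so the sharper contradiction must instead be extracted directly, bypassing $U$: since $f(k_0)=0$ for all $f \in \mathbb{Y}$, the functional $\delta_{k_0} \in C_0(K)^*$ annihilates $\mathbb{Y}$, and one constructs via Lemma \ref{Uryshon} a function $g \in C_0(K)$ with $g(k_0)=1$, $g \equiv 0$ off a neighbourhood of $k_0$, so that $\delta_{k_f}(g)=0$ for the norm-attaining functional of each $f \in \mathbb{Y}$ whose norm-attainment avoids that neighbourhood. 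This mirrors the proof of Proposition \ref{necessary} with the roles of $U$ and $k_0$ reversed, and by Lemma \ref{James} gives $f \perp_B g$ for all $f \in \mathbb{Y}$, hence $\mathbb{Y} \perp_B g$, contradicting anti-coproximinality. Thus $\mathbb{Y}$ is not anti-coproximinal, completing the argument.
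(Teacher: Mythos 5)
Your final argument --- the only one of your several attempts that reaches a conclusion --- has a genuine gap, and it is exactly the one you yourself flagged two paragraphs earlier and then silently assumed away. You construct, via Lemma \ref{Uryshon}, a function $g$ with $g(k_0)=1$ and $g\equiv 0$ off a neighbourhood $V$ of $k_0$, and you want $f\perp_B g$ for \emph{every} $f\in\mathbb{Y}$ by applying Lemma \ref{James} to a functional $\delta_{k_f}$ with $k_f\in M_f$. This works only when some norm-attaining point $k_f$ of $f$ lies where $g$ vanishes; your own phrase ``for each $f\in\mathbb{Y}$ whose norm-attainment avoids that neighbourhood'' is precisely this restriction, yet the conclusion ``$f\perp_B g$ for all $f\in\mathbb{Y}$, hence $\mathbb{Y}\perp_B g$'' needs it for \emph{all} $f$. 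The hypothesis only forces each $f\in\mathbb{Y}$ to vanish at the single point $k_0$; nothing prevents $M_f\subset V\setminus\{k_0\}$. Concretely, in $C[0,1]$ with $k_0=0$ and $\mathbb{Y}=\{f: f(0)=0\}$, a bump $f\in\mathbb{Y}$ peaked at a single point $t_1\in V\setminus\{0\}$ has $J(f)=\{\delta_{t_1}\}$ and $\delta_{t_1}(g)=g(t_1)>0$, so $f\not\perp_B g$ (indeed $\|f-\lambda g\|<\|f\|$ for small $\lambda>0$), and the claimed contradiction never materialises.

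For comparison: the paper gives no proof at all; it declares the corollary ``immediate'' from Proposition \ref{necessary}. That derivation is genuinely immediate only when $\{k_0\}$ is open, i.e.\ $k_0$ is isolated: then $U=\{k_0\}$ is an admissible open set, and $M_f\subseteq U$ would force $\|f\|=|f(k_0)|=0$, hence $f=0$ and $M_f=K\not\subseteq U$, contradicting the proposition. When $k_0$ is not isolated, the obstruction you hit is fatal not just to your argument but to the statement itself: for $\mathbb{Y}=\{f\in C[0,1]: f(0)=0\}$ and \emph{any} nonzero $g$, choose $t_0\neq 0$ with (after replacing $g$ by $-g$ if needed) $g(t_0)>0$, take a Urysohn function $u$ with $u(t_0)=1$ vanishing on $\{g\le 0\}\cup\{0\}$, and set $f=(\|g\|+1)\,u\in\mathbb{Y}$; a pointwise estimate (using $g>0$ wherever $u>0$) plus compactness gives $\|f-g\|<\|f\|$, so $\mathbb{Y}\not\perp_B g$. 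Thus this $\mathbb{Y}$ satisfies all hypotheses of the corollary yet is anti-coproximinal. So no blind proof could have closed the gap: the statement needs a stronger hypothesis --- for instance that $k_0$ is an isolated point, or that all of $\mathbb{Y}$ vanishes on a common nonempty open set $U$ (in which case no nonzero $f\in\mathbb{Y}$ can have $M_f\subseteq U$, and the corollary really does follow from Proposition \ref{necessary} in one line).
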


We next provide a sufficient condition for strongly anti-coproximinal subspaces.

\begin{prop}
	Let $\mathbb{Y}$ be a proper closed subspace of $C_0(K),$ where $K$ is a locally compact Hausdorff space. Suppose that  $D \subset K$ is dense in $K.$ 
If	for each $k \in D,$ there exists an $f \in \mathbb{Y}$ such that $|f(k)|> |f(k')|,$ for all $k' \in K \setminus \{k\},$ then $\mathbb{Y}$ is strongly anti-coproximinal in $C_0(K).$
\end{prop}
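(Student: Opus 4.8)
The plan is to argue by contradiction, using the characterization of approximate Birkhoff--James orthogonality in $C_0(K)$ from Theorem \ref{approximate:continuous} (equivalently, Lemma \ref{Chem} together with the description of supporting functionals on $C_0(K)$). Suppose $\mathbb{Y}$ is not strongly anti-coproximinal in $C_0(K)$. Then there exist a nonzero $h \in C_0(K)$ and some $\epsilon \in [0,1)$ with $\mathbb{Y} \perp_B^\epsilon h$. Since approximate orthogonality is unaffected by rescaling the second argument (replacing $h$ by $ch$ with $c>0$ only rescales the parameter $\lambda$ in the defining inequality), I may assume $\|h\| = 1$.

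Next I would exploit the hypothesis pointwise along the dense set $D$. Fix $k \in D$ and choose $f_k \in \mathbb{Y}$ with $|f_k(k)| > |f_k(k')|$ for every $k' \in K \setminus \{k\}$; this forces $\|f_k\| = |f_k(k)|$ and $M_{f_k} = \{k\}$. The crucial local step is that, because $f_k$ attains its norm only at $k$, every supporting functional of $f_k$ must be concentrated at $k$: writing such a functional as a norm-one measure $\mu$ with $\int f_k\,d\mu = \|f_k\|$, the chain $|\int f_k\,d\mu| \le \int|f_k|\,d|\mu| \le \|f_k\|$ can be an equality only if $|\mu|$ lives on $M_{f_k}=\{k\}$, whence $\mu = \gamma_k\,\delta_k$ for the unique unimodular $\gamma_k = \|f_k\|/f_k(k)$, where $\delta_k(g)=g(k)$. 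Thus $f_k$ is a smooth point with $J(f_k)=\{\gamma_k\delta_k\}$; equivalently, in Theorem \ref{approximate:continuous} the index set collapses to $k'=k$ with the single admissible scalar $\gamma_k$, so the convex hull reduces to the single point $\{\gamma_k h(k)\}$.

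Consequently, $f_k \perp_B^\epsilon h$ forces $|\gamma_k h(k)| = |h(k)| \le \epsilon\|h\| = \epsilon$ (via Lemma \ref{Chem} applied to the singleton $J(f_k)$, or equivalently via the singleton convex hull in Theorem \ref{approximate:continuous}). Running this over all $k \in D$ yields $|h(k)| \le \epsilon$ for every $k \in D$. Finally, since $h$ is continuous and $D$ is dense in $K$, the supremum of $|h|$ over $K$ equals its supremum over $D$, so $\|h\| = \sup_{k \in K}|h(k)| = \sup_{k \in D}|h(k)| \le \epsilon < 1$, contradicting $\|h\|=1$. This contradiction shows that $\mathbb{Y}$ must be strongly anti-coproximinal in $C_0(K)$.

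I expect the main obstacle to be the local step identifying $J(f_k)$: one must verify carefully that $M_{f_k}=\{k\}$ really forces every supporting functional of $f_k$ to be a unimodular multiple of the evaluation $\delta_k$ (so that $f_k$ is smooth and the orthogonality set in Theorem \ref{approximate:continuous} is a single point), which is what turns the $\epsilon$-orthogonality relation into the clean pointwise bound $|h(k)| \le \epsilon$. By contrast, the passage from the pointwise bound on $D$ to the norm bound on $K$, using density and continuity, is routine.
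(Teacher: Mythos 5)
Your proof is correct and follows essentially the same route as the paper's: a contradiction argument that uses the hypothesis to produce, for each $k \in D$, a function $f_k \in \mathbb{Y}$ with $M_{f_k} = \{k\}$ and singleton $J(f_k)$, then applies Lemma \ref{Chem} to get $|h(k)| \le \epsilon\|h\|$ and concludes by density of $D$. The only difference is that you justify the identification $J(f_k) = \{\gamma_k \delta_k\}$ via the Riesz representation and an equality-in-the-triangle-inequality argument (correctly keeping track of the unimodular factor $\gamma_k$, which the paper's phrasing $J(\widetilde{f_k}) = \{\delta_k\}$ glosses over), where the paper simply asserts it as easy to observe.
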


\begin{proof}
	Suppose on the contrary that $\mathbb{Y}$ is not strongly anti-coproximinal in $C_0(K).$ Then there exists $\epsilon \in [0,1), g \in C_0(K) \setminus \mathbb{Y}$  such that $\mathbb{Y} \perp_B^{\epsilon} g.$ Let $k \in D$ and let $\widetilde{f_k} \in \mathbb{Y}$ such that $|\widetilde{f_k}(k)| > |\widetilde{f_k}(k')|,~\forall k' \in K \setminus \{k\}.$ So, $\|\widetilde{f_k}\|= |\widetilde{f_k}(k)|.$ It is easy to observe that $J(\widetilde{f_k})=\{\delta_k\},$ where $\delta_{k}: C_0(K) \to \mathbb{K}$ is given by $\delta_{k}(h)=h(k), ~\forall h \in C_0(K).$ As $\widetilde{f_k} \perp_B^\epsilon g,$ using Lemma \ref{Chem}, we get
	$$|\delta_k(g)| \leq \epsilon \|g\| \implies |g(k)| \leq  \epsilon \|g\|.$$ Observe that 
	\[
	\|g\|= \sup_{k \in K} |g(k)|= \sup_{k \in D} |g(k)|\leq \epsilon \|g\| < \|g\|,
	\]	
	a contradiction. Thus, $\mathbb{Y}$ is not strongly anti-coproximinal.
\end{proof}

In addition, if we assume that $K$ is locally connected, then we can prove that the above necessary condition given in Proposition \ref{necessary} is also sufficient for anti-coproximinal as well as for strongly anti-coproximinal subspaces. From now on  we consider the notation $C_0(K)$ for the space $C_0(K, \mathbb{R}).$ We require the following classical result for our purpose.

\begin{lemma}\label{singer}\cite[Th. 2.4]{CSW}
	Let $\mathbb{X}$ be a real Banach space. Suppose  $x,y \in \mathbb{X}$ and $\epsilon \in [0,1).$ Then $x \perp_B^\epsilon y$ if and only if there exists $\phi, \psi \in Ext(B_{\mathbb{X}^*}) \cap J(x)$  such that $|((1-t) \phi + t \psi)y|\leq \epsilon \|y\|,$ for some $0 \leq t \leq 1.$
\end{lemma}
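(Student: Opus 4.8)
The plan is to reduce the claim to the single-functional criterion of Lemma \ref{Chem} and then extract the required \emph{extreme} supporting functionals from the extremal structure of the weak*-compact convex set $J(x)$. Recall that Lemma \ref{Chem} states $x \perp_B^\epsilon y$ if and only if there is some $f \in J(x)$ with $|f(y)| \le \epsilon\|y\|$. So the entire content is to show that such an $f$ can always be realized as a convex combination $(1-t)\phi + t\psi$ of two extreme supporting functionals, and conversely that every such combination already lies in $J(x)$ and hence feeds back into Lemma \ref{Chem}.

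First I would record the structural fact that $J(x) = \{f \in B_{\mathbb{X}^*} : f(x) = \|x\|\}$ is a weak*-compact convex \emph{face} of $B_{\mathbb{X}^*}$: if $f = \tfrac12(g+h)$ with $g,h \in B_{\mathbb{X}^*}$ and $f(x) = \|x\|$, then since $g(x), h(x) \le \|x\|$ we are forced to have $g(x) = h(x) = \|x\|$, so $g,h \in J(x)$. The standard consequence is that any extreme point of the face $J(x)$ is extreme in the whole ball, i.e. $Ext(J(x)) = Ext(B_{\mathbb{X}^*}) \cap J(x)$. This is the precise link that makes the functionals $\phi, \psi$ in the statement lie in $Ext(B_{\mathbb{X}^*})$ rather than merely in $J(x)$.

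Next, I would use that $\mathbb{X}$ is \emph{real}: the map $f \mapsto f(y)$ is a weak*-continuous affine real functional on the weak*-compact convex set $J(x)$, so its image $W = \{f(y): f\in J(x)\}$ is a compact interval $[a,b]$. The set of maximizers $\{f \in J(x): f(y)=b\}$ is again a nonempty weak*-compact convex face of $J(x)$, so by Krein--Milman it contains an extreme point $\psi$, which is extreme in $J(x)$ and hence in $B_{\mathbb{X}^*}$; symmetrically one gets a minimizer $\phi \in Ext(B_{\mathbb{X}^*}) \cap J(x)$ with $\phi(y)=a$. By convexity of $J(x)$ and linearity, $((1-t)\phi + t\psi)(y) = (1-t)a + tb$ sweeps out exactly $W=[a,b]$ as $t$ ranges over $[0,1]$. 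The two directions then close immediately: for necessity, Lemma \ref{Chem} supplies $f\in J(x)$ with $|f(y)|\le\epsilon\|y\|$, and writing $f(y)=(1-t)a+tb$ gives $|((1-t)\phi+t\psi)(y)|=|f(y)|\le\epsilon\|y\|$; for sufficiency, any such combination lies in $J(x)$, so the displayed inequality is precisely the hypothesis of Lemma \ref{Chem}.

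The hard part, and the only genuinely non-formal step, is the simultaneous attainment of $\max$ and $\min$ of $f\mapsto f(y)$ at points that are extreme in \emph{all} of $B_{\mathbb{X}^*}$; this is exactly where the face property of $J(x)$ together with Krein--Milman does the real work, and it is also where reality of $\mathbb{X}$ is essential, since it forces $W$ to be a segment so that two extreme functionals suffice (over $\mathbb{C}$ the image would be a planar convex set and a Carath\'eodory-type argument with three extreme points would be needed). I would dispose of the degenerate cases $x=0$ or $y=0$ at the outset, since there both sides of the equivalence hold trivially.
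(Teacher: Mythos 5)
Your proof is correct, but there is no internal proof to compare it against: the paper states this lemma as a quoted result, citing \cite[Th. 2.4]{CSW}, and supplies no argument of its own (just as with Lemma \ref{Chem}, which it also imports from \cite{CSW}). Your derivation --- reduce to the single-functional criterion of Lemma \ref{Chem}; observe that $J(x)$ is a weak*-compact convex face of $B_{\mathbb{X}^*}$, so that $Ext(J(x)) = Ext(B_{\mathbb{X}^*}) \cap J(x)$; use Krein--Milman (Bauer's maximum principle) to attain the maximum $b$ and minimum $a$ of the weak*-continuous affine map $f \mapsto f(y)$ on $J(x)$ at points $\psi, \phi \in Ext(B_{\mathbb{X}^*}) \cap J(x)$; and then note that $((1-t)\phi + t\psi)(y)$ sweeps out exactly $[a,b] = \{f(y) : f \in J(x)\}$, so that the two directions close via Lemma \ref{Chem} and the convexity of $J(x)$ --- is complete and correct, and it is essentially the natural (and, up to presentation, the standard) proof of the cited theorem. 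Your side remark about where reality enters is also accurate: over $\mathbb{C}$ the image of $J(x)$ under $f \mapsto f(y)$ is a planar compact convex set, so a segment between two extreme functionals no longer suffices and a three-point Carath\'eodory decomposition would be needed. The only nitpick is the degenerate case: the paper defines supporting functionals, hence $J(x)$, only for $x \neq 0$, so the lemma should be read with that restriction; your trivial disposal of $x=0$ is harmless but appeals to a convention the paper does not fix, whereas the case $y=0$ genuinely does need the nonemptiness of $Ext(B_{\mathbb{X}^*}) \cap J(x)$, which your face-plus-Krein--Milman observation already provides.
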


	\begin{theorem}\label{C(K)}
		Let $K$ be a locally connected and locally compact  normal space. Suppose that  $\mathbb{Y}$ is a proper closed subspace of $C_0(K)$. Then the following are equivalent:
		\begin{itemize}
			\item[(i)] $\mathbb{Y}$ is strongly anti-coproximinal in $C_0(K)$.
			\item[(ii)] $\mathbb{Y}$ is anti-coproximinal  in $C_0(K)$.
			\item[(iii)] For each nonempty open subset $U$ of $K,$ there exists an $f \in \mathbb{Y}$ such that $M_f \subset U.$
			
		\end{itemize} 
	\end{theorem}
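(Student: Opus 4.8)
The plan is to prove the three statements equivalent by the cycle (i) $\Rightarrow$ (ii) $\Rightarrow$ (iii) $\Rightarrow$ (i). The implication (i) $\Rightarrow$ (ii) is immediate, since $\perp_B=\perp_B^0$ and hence a strongly anti-coproximinal subspace is anti-coproximinal. The implication (ii) $\Rightarrow$ (iii) is exactly Proposition \ref{necessary}: as $K$ is locally compact and normal, an anti-coproximinal subspace must, for every nonempty open $U$, contain some $f$ with $M_f\subset U$. Thus the whole content of the theorem is the remaining implication (iii) $\Rightarrow$ (i), which is where local connectedness must enter.

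For (iii) $\Rightarrow$ (i) I would argue by contradiction. Suppose $\mathbb{Y}$ is not strongly anti-coproximinal; then there are $g\neq 0$ and $\epsilon\in[0,1)$ with $\mathbb{Y}\perp_B^\epsilon g$, and we may normalise $\|g\|=1$. Working in the real space $C_0(K,\mathbb{R})$, I would invoke Lemma \ref{singer} (equivalently Theorem \ref{approximate:continuous} with $C=\{\pm 1\}$), which reduces $f\perp_B^\epsilon g$ to the condition that the interval $co\big(\{\,\operatorname{sgn}(f(k))\,g(k):k\in M_f\,\}\big)$ meets $[-\epsilon,\epsilon]$. Consequently, to reach a contradiction it suffices to produce a single $f\in\mathbb{Y}$ for which every value $\operatorname{sgn}(f(k))g(k)$, $k\in M_f$, lies strictly on one side of $[-\epsilon,\epsilon]$; such an $f$ satisfies $f\not\perp_B^\epsilon g$, contradicting $\mathbb{Y}\perp_B^\epsilon g$.

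To manufacture such an $f$ I would use local connectedness as follows. The set $O=\{k\in K:|g(k)|>\epsilon\}$ is open and contains $M_g$, hence is nonempty; since $K$ is locally connected, the connected components of $O$ are open. Choose a component $U$ with $U\cap M_g\neq\emptyset$. On the connected set $U$ the continuous function $g$ never vanishes (as $|g|>\epsilon>0$ there), so by the intermediate value property $g$ has constant sign on $U$; say $g>\epsilon$ on $U$. Applying (iii) to $U$ yields $f\in\mathbb{Y}$ with $M_f\subset U$, whence $g(k)>\epsilon$ for all $k\in M_f$. \emph{If $f$ has constant sign on $M_f$}, then every number $\operatorname{sgn}(f(k))g(k)$ shares this sign and has modulus $>\epsilon$, so their convex hull avoids $[-\epsilon,\epsilon]$ and we are done. (Equivalently, fixing $k_0\in M_g\cap U$ with $g(k_0)=1$, the element $g$ lies in the maximal face $F=\{h\in S_{C_0(K)}:h(k_0)=1\}$, and a constant-sign peaker gives the face mechanism of Theorem \ref{sufficient:interior}, forcing $y^*(g)=1$ for every $y^*\in J(f)$.)

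The main obstacle is precisely this sign control. Condition (iii) locates $M_f$ inside $U$ but says nothing about the sign of $f$, and a continuous function can attain both $+\|f\|$ and $-\|f\|$ inside a connected set; in that case the corresponding values $\operatorname{sgn}(f(k))g(k)$ have opposite signs and their convex hull straddles $0\in[-\epsilon,\epsilon]$, destroying the argument. To overcome this I would argue that the peaker may be chosen of constant sign on its norm-attainment set: if not, the positive and negative peak sets $A_\pm=\{k\in M_f:f(k)=\pm\|f\|\}$ are disjoint nonempty compacta contained in $U$, which by normality can be separated by disjoint open sets; choosing, via local connectedness, a connected open $V\subset U$ isolating $A_+$ (on which $g$ is still $>\epsilon$) and re-applying (iii) to $V$ refines the norm-attainment set, and iterating this separation should reduce matters to a function whose norm-attainment set carries a single sign — at which point the sufficient condition for strong anti-coproximinality established above (existence of $f\in\mathbb{Y}$ with $|f(k)|>|f(k')|$ for all $k'\neq k$) also applies. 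Making this refinement terminate, i.e.\ upgrading \emph{``norm attainment localised in $U$''} to \emph{``norm attained with a single sign''} using only local connectedness and normality, is the delicate technical heart of the proof and the step I expect to demand the most care.
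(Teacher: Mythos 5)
Your cycle (i) $\Rightarrow$ (ii) $\Rightarrow$ (iii) $\Rightarrow$ (i), and your reduction of (iii) $\Rightarrow$ (i) to producing an $f\in\mathbb{Y}$ with $M_f\subset V$ and \emph{constant sign} on $M_f$, are exactly on target; but the proposal has a genuine gap at the very point you flag, and the repair you sketch cannot work as described. Separating $A_+=\{k: f(k)=\|f\|\}$ and $A_-=\{k: f(k)=-\|f\|\}$ by disjoint open sets and re-applying (iii) to a connected open $V'\supset A_+$ merely returns some new $f'\in\mathbb{Y}$ with $M_{f'}\subset V'$, about whose signs you again know nothing; no quantity decreases from one step to the next, so the iteration has no termination argument. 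Moreover the obstruction is real, not an artifact of the method: if $f$ attains $+\|f\|$ at $k_1$ and $-\|f\|$ at $k_2$ with $g(k_1),g(k_2)>0$, then $\phi=(1-t)\delta_{k_1}-t\delta_{k_2}$ with $t=g(k_1)/(g(k_1)+g(k_2))$ lies in $J(f)$ and satisfies $\phi(g)=0$, so $f\perp_B g$; hence no element attaining both signs inside $V$ can ever furnish the contradiction, and the constant-sign element must genuinely be manufactured. It is worth telling you that the paper's own proof is silent on precisely this issue: after invoking Lemma \ref{singer} it writes the two extreme supporting functionals as $\delta_{k_1},\delta_{k_2}$, whereas $Ext(B_{C_0(K)^*})=\{\pm\delta_k: k\in K\}$ and the ones in $J(f)$ are $\mathrm{sgn}(f(k_i))\,\delta_{k_i}$; with mixed signs the quantity $|(1-t)g(k_1)-tg(k_2)|$ can be $\leq\epsilon\|g\|$ even though $g>\epsilon\|g\|$ on all of $V$, so the paper's intermediate-value step, as written, also covers only your constant-sign case.

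The gap can be closed, but by a different mechanism than iterated separation. From $\mathbb{Y}\perp_B^\epsilon g$ and Lemma \ref{singer} one gets, as you already observed, that every $u\in\mathbb{Y}$ with $M_u\subset V$ is \emph{balanced}, i.e. $\max u=-\min u=\|u\|$, with both peak sets inside $V$. Apply this to the perturbations $f+th$, where $h\in\mathbb{Y}$ is arbitrary and $t$ is small: since the sets $\{|f|\geq 1-\eta\}$ are compact and shrink to $M_f\subset V$, we have $M_{f+th}\subset\{|f|>1-\eta\}\subset V$ for $t$ small, so $f+th$ is balanced; comparing $\max(f+th)$ and $-\min(f+th)$ to first order in $t$ and letting $\eta\to 0$ yields the rigidity relation $\max_{A_+}h=-\min_{A_-}h$ for \emph{every} $h\in\mathbb{Y}$, where $A_\pm$ are the peak sets of $f$. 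Now apply (iii) once more, to the nonempty open set $\{f>1-\eta\}\subset V$: the resulting $h$ is balanced with both of its peak sets contained in $\{f>0\}$, and the same rigidity relation written for $h$ in place of $f$, evaluated at $u=f$, equates a strictly positive number with a strictly negative one. This contradiction completes (iii) $\Rightarrow$ (i). So your instinct that sign control is the heart of the matter is correct, but what it requires is this perturbation/rigidity argument (absent from both your proposal and the paper's proof), not a separation-and-recursion scheme.
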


	\begin{proof}
		We begin this proof by noting that (i) $\implies$ (ii) holds trivially. Also, (ii) $\implies$ (iii) follows from  Proposition \ref{necessary}.\\
		Therefore, we only prove that (iii) $\implies $ (i).
		 Suppose on the contrary that there exists $\epsilon \in [0,1), g \in C_0(K) \setminus \mathbb{Y}$  such that $\mathbb{Y} \perp_B^{\epsilon} g.$ As $M_g$ is nonempty, suppose $|g(k')|=\|g\|,$ for some $k'\in K.$ As $g$ is continuous, there exists an open set $W$ containing $k'$  such that $|g(w)|> \epsilon\|g\|,$ for all $w \in W.$ Since $K$ is locally connected, it follows that there exists an open connected set $V$ such that $k'\in V \subset W.$ Then clearly, $|g(v)|> \epsilon\|g\|,$ for all $v \in V.$   Now, from (iii), there exists an $f \in \mathbb{Y}$ such that $M_{f} \subset V.$ Since $f \perp_B^{\epsilon} g,$ using Lemma \ref{singer}, there exist $\phi, \psi \in Ext(B_{C_0(K)^*}) \cap J(f)$ such that
		  $$|((1-t)\phi + t \psi)(g)|\leq \epsilon \|g\|,$$ 
		  for some $t \in [0, 1].$ It is well known that (cf. \cite[Chapter I, 1.10]{singer}) $\phi = \delta_{k_1}$ and $\psi=\delta_{k_2}$ for some $k_1, k_2 \in M_f,$ where $\delta_{k_i}: C_0(K) \to \mathbb{R}$ such that $\delta_{k_i}(h)= h(k_i), ~ \forall h \in C_0(K).$ Therefore,    $|((1-t)\delta_{k_1}+ t\delta_{k_2})(g)| \leq \epsilon\|g\|. $ So, $|(1-t) g(k_1)+ t g(k_2)| \leq \epsilon \|g\|.$
		   As $k_1, k_2 \in V,$ $V$ is connected and $g$ is   continuous, it is straightforward to see that there exists $k_0 \in V$ such that  $ |g(k_0)| \leq  \epsilon \|g\|,$ a contradiction. So $\mathbb{Y}$ is strongly anti-coproximinal  in $C_0(K)$, as desired.
	\end{proof}

\begin{remark}
  	It is well known that $\mathbb{X}^*$ is embedded into  $C(S_{\mathbb{X}})$  via the map $\Psi: z^* \longrightarrow z^*|_{S_\mathbb{X}}.$ Whenever $\mathbb{X}$ is finite-dimensional, we consider the subspace $\Psi(\mathbb{X}^*)$ in $C(S_{\mathbb{X}}).$ Note that for any $z^* \in \mathbb{X}^*,$ $M_{z^*}$ always contains the antipodal points of $S_{\mathbb{X}}$. Therefore, applying the condition (iii) of Theorem \ref{C(K)}, we can observe that $\Psi(\mathbb{X}^*)$ is not anti-coproximinal in $C(S_\mathbb{X}).$ 
\end{remark}



Next we show that any finite-codimensional subspace of $C_0(K)$ is strongly anti-coproximinal, whenever $ K $ has additional nice properties. 
  
\begin{theorem}\label{finite-codimension}
	Let $K$ be a locally connected, locally compact and perfectly normal space such that $K$ does not contain any isolated point. Then any finite-codimensional subspace  of $C_0(K)$ is strongly anti-coproximinal.
\end{theorem}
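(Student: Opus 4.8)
The plan is to reduce everything to the characterization already established in Theorem \ref{C(K)}. Since $K$ is locally connected, locally compact and, being perfectly normal, normal, Theorem \ref{C(K)} applies to every proper closed subspace of $C_0(K)$; interpreting our finite-codimensional $\mathbb{Y}$ as closed (so that it is a common kernel of finitely many functionals) and proper, it therefore suffices to verify condition (iii) of that theorem, namely that for every nonempty open set $U \subseteq K$ there exists $f \in \mathbb{Y}$ with $M_f \subseteq U$. This single reduction does all the work, since the equivalence (iii) $\Rightarrow$ (i) of Theorem \ref{C(K)} then yields strong anti-coproximinality directly.

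First I would represent $\mathbb{Y}$ as a common kernel. By the Riesz representation theorem $C_0(K)^*$ consists of regular Borel measures, and since $\mathbb{Y}$ has finite codimension $n \geq 1$ there are linearly independent functionals $\mu_1, \dots, \mu_n \in C_0(K)^*$ with $\mathbb{Y} = \bigcap_{i=1}^n \ker \mu_i$. Fixing a nonempty open $U$, the key idea is to look for the required $f$ only among functions that vanish off $U$: set
$$\mathbb{Z}_U = \{ f \in C_0(K) : f(k) = 0 \ \text{for all} \ k \in K \setminus U \}.$$
Restriction to $U$ together with extension by zero identifies $\mathbb{Z}_U$ isometrically with $C_0(U)$; the extension by zero is continuous precisely because the $C_0$-decay of $f$ forces $f \to 0$ at every boundary point of $U$, and here the locally compact structure is genuinely used.

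The crux is then a dimension count. Since $K$ is $T_1$ and has no isolated point, the open set $U$ is infinite, so $C_0(U)$, and hence $\mathbb{Z}_U$, is infinite-dimensional. The subspace $\mathbb{Y} \cap \mathbb{Z}_U = \bigcap_{i=1}^n \ker(\mu_i|_{\mathbb{Z}_U})$ has codimension at most $n$ in $\mathbb{Z}_U$ and therefore remains infinite-dimensional; in particular it contains a nonzero element $f$. This $f$ lies in $\mathbb{Y}$, satisfies $\|f\| > 0$, and vanishes on $K \setminus U$, so $M_f \subseteq \{ k : f(k) \neq 0 \} \subseteq U$. This verifies (iii), and the theorem follows.

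I expect the main obstacle to be conceptual rather than computational: recognizing that one should search for $f$ inside the isometric copy of $C_0(U)$ sitting in $C_0(K)$, instead of attempting an explicit Urysohn-type construction with a prescribed norm-attainment set. Once this reduction is in place, the hypothesis that $K$ has no isolated point becomes transparent—it is exactly what forces $C_0(U)$ to be infinite-dimensional, so that finitely many linear constraints cannot exhaust it. The remaining care points, both routine, are justifying the isometric embedding $C_0(U) \hookrightarrow C_0(K)$ by extension by zero and the consequent inclusion $M_f \subseteq U$.
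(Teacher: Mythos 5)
Your proof is correct, and it shares the paper's essential mechanism --- reduce to condition (iii) of Theorem \ref{C(K)} and then defeat finitely many linear constraints with functions supported inside $U$ --- but the packaging is genuinely different. The paper argues by contradiction: assuming (iii) fails for some nonempty open $U$, it uses perfect normality to build $m+1$ Urysohn functions $g_i$ with $g_i^{-1}(\{1\})=\{k_i\}$ and pairwise disjoint supports inside $U$ (so that $M_{g_i}=\{k_i\}$), invokes codimension $m$ to obtain a relation $g_{m+1}-\sum_{i\le m}\alpha_i g_i=f\in\mathbb{Y}$, and evaluates at a point of $M_f\setminus U$ to force $f=0$, contradicting linear independence. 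You instead verify (iii) directly: the subspace $\mathbb{Z}_U\cong C_0(U)$ of functions vanishing off $U$ is infinite-dimensional, so it cannot be annihilated by finitely many linear functionals and hence meets $\mathbb{Y}$ in a nonzero $f$, which automatically satisfies $M_f\subseteq U$. Your version is cleaner and in fact needs slightly less topology: you never require exact level sets, so perfect normality is used only through Theorem \ref{C(K)} itself, where plain normality suffices; the paper's construction, by contrast, is finite and fully explicit.

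Two points should be tightened. First, the inference ``$U$ infinite $\Rightarrow C_0(U)$ infinite-dimensional'' is not a purely cardinality statement (an infinite set with the cofinite topology is $T_1$ yet admits only constant continuous functions); what makes it true here is precisely the bump-function construction: distinct points of $U$ have pairwise disjoint neighborhoods, and Urysohn's lemma in the locally compact normal space $K$ produces arbitrarily large linearly independent families in $\mathbb{Z}_U$. This is the same Urysohn input the paper uses, and it deserves an explicit line. Second, the appeal to the Riesz representation theorem is unnecessary and mildly restrictive: any codimension-$n$ subspace, closed or not, is the common kernel of $n$ linear (not necessarily continuous) functionals, and your dimension count goes through verbatim; this matters because the theorem as stated does not assume $\mathbb{Y}$ closed, and the only direction of Theorem \ref{C(K)} you invoke, namely (iii) $\Rightarrow$ (i), nowhere uses closedness of $\mathbb{Y}$.
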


\begin{proof}
	Let $\mathbb Y$ be an $m$-codimensional subspace  of $C_0(K).$ Suppose on the contrary that $\mathbb Y$ is not strongly anti-coproximinal in $C_0(K)$. Applying Theorem  \ref{C(K)}, there exists an nonempty open set $U \subset K$ such that for any $f \in \mathbb Y, M_f \cap ( K \setminus U)\neq \emptyset.$ So, for any $f \in \mathbb{Y},$ take $k_f \in K \setminus U$ such that $\|f\|= |f(k_f)|.$
	 Let $k_1, k_2, \ldots, k_{m+1} \in U.$ As $K$ is Hausdorff, let $V_1, V_2, \ldots, V_m, V_{m+1} \subset U$ be the open sets of $K$ such that $ k_i \in  V_i,~V_i \cap V_j= \emptyset,$  $\forall i,j \in \{1, 2, \ldots, m+1\}, i \neq j.$ As $K$ is perfectly normal, using  Lemma \ref{Uryshon},  we assert that  there exist  continuous functions $g_i: K \to [0,1]$ such that $$g_i^{-1}(\{1\}) = \{k_i\} ~ \text{and}~g_i^{-1}(\{0\})= K \setminus V_i, \, \forall \,i = 1,2, \ldots, m+1.$$
	Clearly, for each $1 \leq i \leq m+1,$ $M_{g_i}=\{k_i\}$ and so, $g_i \in C_0(K) \setminus \mathbb Y.$ Moreover it is obvious that $g_i(k_j)=0,$ whenever $i \neq j.$ Observe that $\{g_1, g_2, \ldots, g_{m+1}\}$ is a linearly independent set. Since $ \mathbb Y $ is an $m$-codimensional subspace  of $C_0(K)$, so 
	                               $$g_{m+1} = \alpha_1 g_1 + \alpha_2 g_2 + \ldots+ \alpha_m g_m+  f,  \mbox{where}\quad \alpha_i \in \mathbb{K},  f \in \mathbb Y.$$ 
	Since $k_f \in K \setminus U$ such that $\|f\|= |f(k_f)|,$ we have
	                                    $$g_{m+1}(k_f)=  \sum_{i=1}^{m}\alpha_i g_{i}(k_f) + f(k_f) \implies f(k_f)=0.$$ 
	 As $\|f\|= |f(k_f)|,$ this implies $f=0,$ therefore, $g_{m+1}= \sum_{i=1}^{m}\alpha_i g_i,$ a contradiction. This establishes the theorem.
\end{proof}

\begin{remark}
	
	While studying the specialty of inner product spaces among Banach spaces, James stated an interesting result in \cite[p. 564]{J2}:	``\emph{For no hyperspace $H$ of the space $C[a,b]$ of continuous functions defined on $[a,b] \subset \mathbb{R}$ there is an element $f \in C[a,b]$ with $H \perp_B f.$ }''
	Observe that this is the least favorable case for the space to be an inner product space from the perspective of  \cite[Th. 4]{J2}, which states that a Banach space $\mathbb X$ is an inner product space if and only if for  a given hyperspace $H$ there exists an element $ z \in \mathbb X$ such that $ H \perp_B z.$  To the best of our knowledge,  $C[a,b]$ is the only known example of a Banach space with such a property. Following Theorem \ref{finite-codimension}, we can strengthen the statement above given by James in the following way:\\
	
	\noindent	\emph{Whenever $K$ is locally connected, compact and perfectly normal space, there is no  subspace $\mathbb{Y}$ of $C(K)$ with finite-codimension   such that given any $\epsilon \in [0, 1),$ there is an element $f \in C(K)$ satisfying $\mathbb{Y} \perp_B^\epsilon f.$}

\end{remark}
 


Our final goal in this section is to investigate whether $C(K, \mathbb{Y})$ is  anti-coproximinal (strongly anti-coproximinal) in $C(K , \mathbb{X})$ if $\mathbb{Y}$ is anti-coproximinal (strongly anti-coproximinal) in $\mathbb{X}$ and vice versa. We require the following lemma for our purpose.

\begin{lemma}\label{lemma:outside}
 	Let $K$ be a  compact perfectly normal space and let $\mathbb{Y}$ be a subspace of  $\mathbb{X}.$ Suppose  $f \in S_{C(K, \mathbb{X})}$ is such that $C(K,\mathbb{Y}) \perp_B^\epsilon f,$ where $\epsilon \in [0,1).$ Then $f(M_f) \cap (\mathbb{X} \setminus \mathbb{Y}) \neq \emptyset.$
\end{lemma}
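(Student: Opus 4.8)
The plan is to argue by contradiction. First I would record that since $f \in S_{C(K,\mathbb{X})}$ and $K$ is compact, the map $k \mapsto \|f(k)\|$ is continuous on a compact space and hence attains its supremum; thus $M_f$ is a nonempty closed subset of $K$. Suppose, towards a contradiction, that $f(M_f) \cap (\mathbb{X}\setminus \mathbb{Y}) = \emptyset$, i.e. $f(k) \in \mathbb{Y}$ for every $k \in M_f$. Fix any $k_0 \in M_f$ and set $y_0 := f(k_0) \in \mathbb{Y}$, noting $\|y_0\| = 1$.

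The heart of the argument is to manufacture a single $g \in C(K,\mathbb{Y})$ which attains its norm \emph{only} at $k_0$ and agrees with $f$ there. Using the perfect normality of $K$ (together with $K$ being $T_1$, so that $\{k_0\}$ is closed), I would produce a continuous $\phi : K \to [0,1]$ with $\phi^{-1}(\{1\}) = \{k_0\}$: this is exactly the defining feature of a perfectly normal space, namely that every closed set is the zero set of a continuous $[0,1]$-valued function (apply this to $\{k_0\}$ and subtract from $1$, adapting Lemma \ref{Uryshon}). Setting $g := \phi\, y_0$, the function $g$ lies in $C(K,\mathbb{Y})$ because $\mathbb{Y}$ is a subspace and $y_0 \in \mathbb{Y}$; moreover $\|g(k)\| = \phi(k)$, so $\|g\| = 1$ and $M_g = \phi^{-1}(\{1\}) = \{k_0\}$, with $g(k_0) = y_0 = f(k_0)$.

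Finally I would invoke the hypothesis $C(K,\mathbb{Y}) \perp_B^\epsilon f$, which gives $g \perp_B^\epsilon f$, and feed $g$ into the characterization of approximate Birkhoff-James orthogonality in $C_0(K,\mathbb{X})$ (Theorem \ref{approximate:continuous}, with $C = S_{\mathbb{X}^*}$ and $\|f\| = 1$). Because $\|g\| = 1 = \|y^*\|$, the constraint $y^*(g(k)) = \|g\|$ forces $\|g(k)\| = 1$, hence $k \in M_g = \{k_0\}$ and $y^* \in J(g(k_0)) = J(y_0)$; for every such $y^*$ one has $y^*(f(k_0)) = y^*(y_0) = 1$. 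Thus the set appearing in Theorem \ref{approximate:continuous} collapses to the singleton $\{1\}$, whose convex hull is $\{1\}$ and is disjoint from $\mathcal{D}(\epsilon)$ since $\epsilon < 1$. This contradicts $g \perp_B^\epsilon f$, and therefore $f(M_f) \cap (\mathbb{X}\setminus\mathbb{Y}) \neq \emptyset$.

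I expect the only genuinely delicate step to be the topological construction: forcing $M_g$ to be \emph{exactly} the single point $k_0$ (rather than merely containing it) is what makes the orthogonality set collapse to $\{1\}$, and this is precisely where perfect normality — as opposed to plain normality and Urysohn's lemma, which would only guarantee $\phi(k_0)=1$ on a possibly larger level set — is indispensable. Everything else reduces to a direct application of Theorem \ref{approximate:continuous}.
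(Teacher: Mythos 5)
Your proposal is correct and follows essentially the same route as the paper's proof: the paper likewise picks $k_0 \in M_f$, uses perfect normality to build $\phi$ with $\phi^{-1}(\{1\})=\{k_0\}$, sets $g(k)=\phi(k)f(k_0) \in C(K,\mathbb{Y})$ so that $M_g=\{k_0\}$, and applies Theorem \ref{approximate:continuous} to collapse the orthogonality set to $\{1\}$, contradicting $g \perp_B^\epsilon f$. Your remark that exactness of the level set $\phi^{-1}(\{1\})=\{k_0\}$ (not just $\phi(k_0)=1$) is the point where perfect normality, rather than plain Urysohn, is needed matches the paper's use of the hypothesis precisely.
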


\begin{proof}
	Suppose on the contrary that $f(M_f) \subset \mathbb{Y}.$ Let $k_0 \in M_f$ and let $U \subset K$ be an open set containing $k_0.$ As $K$ is perfectly normal, there exists a  continuous function $\phi: K \to [0,1]$ such that $\phi^{-1}(\{1\})= \{k_0\}$ and $\phi^{-1}(\{0\})= K \setminus U.$ Consider the function $g: K \to \mathbb{X}$ such that $g(k)= \phi(k) f(k_0) ~ \forall k \in K.$ Clearly, $g$ is continuous. As $f(k_0) \in \mathbb{Y},$ it follows trivially that $g(k) \in \mathbb{Y}~ \forall k \in K.$ So, $g \in C(K, \mathbb{Y}).$ It is easy to see that  $M_g= \{k_0\}.$
	Consider the set 
	\[
	S= 	co\bigg(\{y^*(f(k)): k \in K, y^* \in C, y^*(g(k))= \|g\|\} \bigg)  ,
	\]
		where $B_{\mathbb{X}^*}= \overline{co(C)}^{w^*}.$ As $M_g=\{k_0\},$ it is easy to observe that $y^*(g(k))= \|g\| $ implies that $k=k_0.$ So, 
		\[
		S= 	co(\{y^*(f(k_0)):  y^* \in C, y^*(g(k_0))= 1\}) .
		\]
	Observe that $f(k_0)= g(k_0)$, so $y^*(f(k_0))=1,$ where $y^*(g(k_0))=1.$ Therefore $S = \{1\}.$ Now following Theorem \ref{approximate:continuous},  $g \perp_B^\epsilon f$ implies that  $S \cap \mathcal{D}(\epsilon) \neq \emptyset,$ a contradiction. This completes the proof.
	
\end{proof}

\begin{theorem}\label{C(K, X)}
	Let $K$ be a  compact perfectly normal space and let $\mathbb{Y}$ be a subspace of  $\mathbb{X}.$ Then 
	\begin{itemize}
	
		\item[	(i)] $C(K, \mathbb{Y})$ is strongly anti-coproximinal in $C(K, \mathbb{X})$ if and only if $\mathbb{Y}$ is strongly anti-coproximinal in $\mathbb{X}.$
		
			\item[	(ii)] $C(K, \mathbb{Y})$ is anti-coproximinal in $C (K, \mathbb{X})$ if and only if $\mathbb{Y}$ is anti-coproximinal in $\mathbb{X}.$
	\end{itemize} 
\end{theorem}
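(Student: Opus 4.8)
The plan is to prove (i) in full and then obtain (ii) by specializing to $\epsilon = 0$, since a subspace is anti-coproximinal precisely when there is no nonzero $x$ with $\mathbb{Y} \perp_B^0 x$, and $\perp_B^0$ coincides with $\perp_B$. Both implications in (i) will be argued by contraposition, and throughout I will invoke the characterization of approximate Birkhoff--James orthogonality in $C(K,\mathbb{X})$ from Theorem \ref{approximate:continuous} with the generating set taken to be $C = S_{\mathbb{X}^*}$.

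For the direction ``$\mathbb{Y}$ strongly anti-coproximinal $\Rightarrow$ $C(K,\mathbb{Y})$ strongly anti-coproximinal'', I would suppose the conclusion fails, so that there are a nonzero $f \in C(K,\mathbb{X})$ and $\epsilon \in [0,1)$ with $C(K,\mathbb{Y}) \perp_B^\epsilon f$; after normalizing we may take $\|f\|=1$. Here Lemma \ref{lemma:outside} does the decisive work: it produces a point $k_0 \in M_f$ with $f(k_0) \in \mathbb{X}\setminus\mathbb{Y}$, and since $k_0 \in M_f$ we have $\|f(k_0)\| = 1$. As $\mathbb{Y}$ is strongly anti-coproximinal and $f(k_0)$ is a nonzero vector outside $\mathbb{Y}$, we get $\mathbb{Y} \not\perp_B^\epsilon f(k_0)$, so there is a unit vector $y \in \mathbb{Y}$ with $y \not\perp_B^\epsilon f(k_0)$. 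Using perfect normality of $K$ (so that the closed singleton $\{k_0\}$ is a $G_\delta$), I would choose a continuous $\phi : K \to [0,1]$ with $\phi^{-1}(\{1\}) = \{k_0\}$ and set $g = \phi\, y \in C(K,\mathbb{Y})$; then $\|g\| = 1$ and $M_g = \{k_0\}$. Applying Theorem \ref{approximate:continuous} to the pair $g, f$, the constraint $y^*(g(k)) = \|g\|$ forces $\phi(k)=1$, hence $k = k_0$ and $y^* \in J(y)$, so the relevant set collapses to the convex set $V(\mathbb{X},y,f(k_0)) = \{y^*(f(k_0)) : y^* \in J(y)\}$ (cf. the proof of Theorem \ref{V:set}). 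Since $y \not\perp_B^\epsilon f(k_0)$, Lemma \ref{Chem} tells us this set misses $\mathcal{D}(\epsilon)= \mathcal{D}(\epsilon\|f\|)$, whence $g \not\perp_B^\epsilon f$, contradicting $C(K,\mathbb{Y}) \perp_B^\epsilon f$.

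For the converse direction of (i), I would again argue by contraposition: assume $\mathbb{Y} \perp_B^\epsilon x$ for some nonzero $x$ and some $\epsilon \in [0,1)$. One first checks $x \notin \mathbb{Y}$ (since no nonzero vector is $\epsilon$-orthogonal to itself), so the constant function $f \equiv x$ lies in $C(K,\mathbb{X}) \setminus C(K,\mathbb{Y})$ with $\|f\| = \|x\|$ and $M_f = K$. The claim is that $C(K,\mathbb{Y}) \perp_B^\epsilon f$: given any nonzero $g \in C(K,\mathbb{Y})$, pick $k_1 \in M_g$; then $g(k_1) \in \mathbb{Y}$ satisfies $g(k_1) \perp_B^\epsilon x$, so by Lemma \ref{Chem} there is $y^* \in J(g(k_1)) \subset S_{\mathbb{X}^*}$ with $|y^*(x)| \le \epsilon\|x\|$. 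This single functional witnesses $y^*(f(k_1)) = y^*(x) \in \mathcal{D}(\epsilon\|f\|)$ inside the set of Theorem \ref{approximate:continuous}, giving $g \perp_B^\epsilon f$; as $g$ is arbitrary, this contradicts strong anti-coproximinality of $C(K,\mathbb{Y})$. The main obstacle is the first direction, and specifically the reduction of the orthogonality test in Theorem \ref{approximate:continuous} to the single point $k_0$ and the face $J(y)$; once Lemma \ref{lemma:outside} and the peaking function are in place, the convexity supplied by Theorem \ref{V:set} makes the non-intersection with $\mathcal{D}(\epsilon)$ immediate. Finally, setting $\epsilon = 0$ throughout both directions yields (ii).
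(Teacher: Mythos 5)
Your proof is correct and follows essentially the same route as the paper's: both directions hinge on Lemma \ref{lemma:outside}, a peak function $\phi$ with $\phi^{-1}(\{1\})=\{k_0\}$ obtained from perfect normality, and the collapse of the orthogonality test of Theorem \ref{approximate:continuous} to the single point $k_0$, while your converse direction uses the constant function $f\equiv x$ exactly as the paper does. The only departures are organizational: you avoid the paper's Carath\'eodory step by observing that $\{y^*(f(k_0)):y^*\in J(y)\}$ is already convex, and you derive (ii) by specializing the argument for (i) to $\epsilon=0$ (legitimate, since $\perp_B^0$ coincides with $\perp_B$ and all cited results cover $\epsilon=0$), whereas the paper proves (ii) separately by invoking \cite[Th. 4.3]{MMQRS}.
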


\begin{proof}
	
	(i)  We first prove the sufficient part. Suppose on the contrary that there exists a nonzero $h \in C(K, \mathbb{X})$ and $\epsilon \in [0,1)$ such that $C(K, \mathbb{Y}) \perp_B^\epsilon h.$ Let $\|h\|=1.$
	As $h \neq 0, $ using Lemma \ref{lemma:outside}  there exists $k_0 \in K$ such that $h(k_0) \in  S_\mathbb{X} \setminus S_\mathbb{Y}.$
	 Let $U \subset K$ be an open set containing $k_0.$ As $K $ is perfectly normal, using Lemma \ref{Uryshon}, there exists a continuous function $\phi: K \to [0,1] $	such that $\phi^{-1}(\{1\})=\{k_0\}$ and $\phi^{-1}(\{0\})= K \setminus U.$ 
	Let $y \in \mathbb{Y}.$ Define $f_y : K \to \mathbb{Y}$ such that $f_y(k)= \phi(k)y~ \forall k \in K.$ Clearly, $f_y \in C(K, \mathbb{Y})$ and $f_y(k)=0, ~ \forall k \in K \setminus U.$ Moreover, $f_y(k_0)= y$ and for any $k \in K \setminus \{k_0\}, $ $\|f_y(k)\|= \|\alpha(k)y\| < \|y\|.$ So, $M_{f_y}=\{k_0\}.$ As  $f_y \perp_B^\epsilon h,$ using Theorem \ref{approximate:continuous}, 
	\[
	co(\{ y^*(h(k_0)): y^* \in C, y^*(f_y(k_0))=\|f_y\|\}) \cap \mathcal{D}(\epsilon)\neq \emptyset.
	\]
	Applying Carath\'{e}odory’s Theorem  (see \cite[Th. 17.1]{R}), there exists real scalars $t_i(i=1,2,3), t_i > 0, \sum_{i=1}^{3} t_i=1$ such that 
	\[
   \bigg|	\sum_{i=1}^{3} t_i y_i^*(h(k_0))\bigg| \leq \epsilon,
	\]
	where $y_i^*(f_y(k_0))=\|f_y\|=\|f_y(k_0)\|.$ For each $1 \leq i \leq 3,$ $y_i^* \in J(f_y(k_0)),$ which implies that  $\sum_{i=1}^{3} t_i y_i^* \in J(f_y(k_0)).$ Since $\|h(k_0)\|=1,$ it follows that $|	\sum_{i=1}^{3} t_i y_i^*(h(k_0))| \leq \epsilon\|h(k_0)\|$ and therefore, following Lemma \ref{Chem}, we get that $f_y(k_0) \perp_B^\epsilon h(k_0)$.
	Since $f_y(k_0)= y,$ we have 
	  $  y \perp_B^\epsilon h(k_0).$
   This implies $\mathbb{Y} \perp_B^\epsilon h(k_0),$ which contradicts that $\mathbb{Y}$ is strongly anti-coproximinal in $\mathbb{X}$.

		Let us now prove the necessary part. Suppose on the contrary that $\mathbb{Y}$ is not strongly anti-coprximinal in $\mathbb{X}.$ Then there exists an  $x \in \mathbb{X} \setminus \mathbb{Y}$ and $\epsilon \in [0,1)$ such that $\mathbb{Y} \perp_B^\epsilon x.$ Define $g : K \to \mathbb{X}$ such that $g(k)= x ~\forall k \in K.$ Let $f \in C(K, \mathbb{Y}),$ and let $k \in M_f.$ As $f(k) \in \mathbb{Y},$ it is clear that $f(k) \perp_B^\epsilon g(k).$ Observe that 
	\[
	\|f + \lambda g\| \geq \|f(k)+ \lambda g(k)\|\geq \|f(k)\|- \epsilon|\lambda|\|g(k)\| \geq  \|f\|- \epsilon|\lambda |\|g\|,
	\]
	which shows that $f \perp_B^\epsilon g.$ However, this implies that $C(K, \mathbb{Y}) \perp_B^\epsilon g,$ contradicting that $C(K, \mathbb{Y})$ is strongly anti-coproximinal in $C(K, \mathbb{X}).$ This completes the proof of (i).
	
	(ii) Let us first prove the sufficient part. Suppose on the contrary that there exists a nonzero $h \in C(K, \mathbb{X}) $ such that  $C(K, \mathbb{Y}) \perp_B h.$ As $h \neq 0, $ then there exists $k_0 \in K$ such that $h(k_0) \in \mathbb{X} \setminus \mathbb{Y}.$  For any $y \in \mathbb{Y},$ let $f_y: K \to \mathbb{Y}$ be the same function defined in (i). Using similar arguments as given in the proof of (i), we obtain that $M_{f_y}=\{k_0\}.$
	 As $f_y \perp_B h,$ applying \cite[Th. 4.3]{MMQRS}, $f_y(k_0) \perp_B h(k_0).$ Since $f_y(k_0)=y,$ we have $y \perp_B h(k_0).$ This implies that $\mathbb{Y} \perp_B h(k_0),$  contradicting that $\mathbb{Y}$ is anti-coproximinal in $\mathbb{X}.$
	
	For the necessary part, suppose on the contrary that $\mathbb{Y}$ is not anti-coproximinal in $\mathbb{X}.$ Then there exists an $x \in \mathbb{X}\setminus \mathbb{Y}$ such that $\mathbb{Y} \perp_B x.$ Let $g : K \to \mathbb{X}$ be the function defined as $g(k)=x~ \forall k \in K.$ Let $f \in C(K, \mathbb{Y})$  and let $k \in M_f.$ As $f(k) \in \mathbb{Y},$  $f(k) \perp_B g(k).$ Observe that 
	\[
	\|f + \lambda g\| \geq \|f(k)+ \lambda g(k)\|\geq \|f(k)\|= \|f\|,
	\]
	proving that $f \perp_B g.$ This implies $C(K, \mathbb{Y}) \perp_B g,$ a contradiction to the fact that $C(K, \mathbb{Y})$ is anti-coproximinal in $C(K, \mathbb{X}).$
	 \end{proof}



\begin{remark}
	We note that the notions of anti-coproximinal and strongly anti-coproximinal subspaces coincide in the  $C(K, \mathbb{R})$ spaces (see Th.  \ref{C(K)}). On the other hand, these two notions do not coincide in $C(K, \mathbb{X}),$ for an arbitrary Banach space  $\mathbb{X}.$  Indeed,  let us take a subspace $\mathbb{Y}$ of $\mathbb{X}$ which is anti-coproximinal  but not strongly anti-coproximinal (see \cite[Example 2.21]{SSGP3}). Then from the above theorem, it follows that $C(K, \mathbb{Y})$ is anti-coproximinal but not strongly anti-coproximinal in $C(K, \mathbb{X})$. 
\end{remark}

\section*{Anti-coproximinality in the space of bounded linear operators}

 In this section, we study the anti-coproximinality  (strongly anti-coproximinality)  in the space of all bounded linear operators defined on Banach spaces using the well known \emph{Bhatia-\v{S}emrl property}.
Recall that an operator $T \in \mathbb{L}(\mathbb{X}, \mathbb{Y})$ satisfies \emph{Bhatia-\v{S}emrl (B\v{S}) Property} \cite{SPH} if for any $A \in \mathbb{L}(\mathbb{X}, \mathbb{Y}),$ $T \perp_B A$ if and only if  $Tx \perp_B Ax,$ for some $x \in M_T.$ Observe that, in general, \emph{Bhatia-\v{S}emrl property} does not hold for an arbitrary bounded linear operator  defined on a Banach space. However the same holds under additional restrictions on the space as well as the operator. For more information on \emph{B\v{S}} property, readers may go through \cite{Kim, SPH, SRSP} and the references therein. We show that if $T \in \mathbb{L}(\mathbb{X}, \mathbb{Y})$ is an  absolutely strongly exposing operator then $T$ satisfies B\v{S} property. Note that  an operator $T \in \mathbb{L}(\mathbb{X}, \mathbb{Y})$ is said to be an absolutely strongly exposing operator (see \cite{Jung}) if there exists $x_0 \in B_{\mathbb{X}}$ such that whenever a sequence $\{x_n\}$ in $B_{\mathbb{X}}$ satisfies $\|Tx_n\| \to \|T\|,$ then there exists a sequence $\{\theta_n\}$ of elements of $\mathbb{T}$ such that $\theta_n x_n \to x_0.$  It is easy to observe that for such an operator $T,$ $M_T = \{\mu x_0: |\mu|=1\},$ for some $x_0 \in S_{\mathbb{X}}.$
The set of absolutely strongly exposing operators of $\mathbb{L}(\mathbb{X}, \mathbb{Y})$ is denoted by $ASE(\mathbb{X}, \mathbb{Y})$.



\begin{theorem}\label{BSP}
	Let $\mathbb{X}$ and $\mathbb{Y}$ be two Banach spaces and let $\epsilon \in [0,1).$ Suppose that  $T \in ASE(\mathbb{X}, \mathbb{Y})$ with $M_T= \{ \mu x_0: |\mu|=1\}.$  Then for any $A \in S_{\mathbb{L}(\mathbb{X}, \mathbb{Y})}$ the following are equivalent:
	\begin{itemize}
		\item[(i)] $T \perp_B^{\epsilon} A$
		\item[(ii)] $ \Omega \cap \mathcal{D}(\epsilon)\neq \emptyset,$ where $\Omega = \{y^*(Ax_0): \, y^*\in J(Tx_0) \}$
	\end{itemize} 
	Moreover, if $M_T \subseteq M_A,$ then $T \perp_B^{\epsilon} A$ if and only if  $Tx_0 \perp_B^\epsilon Ax_0.$
\end{theorem}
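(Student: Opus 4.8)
The plan is to deduce both assertions from the description of approximate orthogonality furnished by Theorem~\ref{approximate:operator}. Assuming, as we may after normalising (which affects neither $M_T$, nor $J(Tx_0)$, nor $\Omega$, since $\perp_B^\epsilon$ is invariant under positive scaling of its first argument and $J(cy)=J(y)$ for $c>0$), that $\|T\|=1$, that theorem says $T\perp_B^\epsilon A$ holds if and only if $co(W)\cap \mathcal D(\epsilon)\neq\emptyset$, where
$$ W=\Big\{\lim y_n^*(Ax_n): (x_n,y_n^*)\in S_{\mathbb X}\times S_{\mathbb Y^*},\ \lim y_n^*(Tx_n)=1\Big\}. $$
My central claim is that $W=\Omega$ and that $\Omega$ is convex; granting this, $co(W)=co(\Omega)=\Omega$, so the condition $co(W)\cap\mathcal D(\epsilon)\neq\emptyset$ reduces exactly to $\Omega\cap\mathcal D(\epsilon)\neq\emptyset$, which is (ii). The convexity of $\Omega$ is routine: $J(Tx_0)$ is a convex (indeed weak*-compact) subset of $B_{\mathbb Y^*}$ and $y^*\mapsto y^*(Ax_0)$ is affine, so its image $\Omega$ is convex.

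The inclusion $\Omega\subseteq W$ is immediate by taking constant sequences $x_n\equiv x_0$ and $y_n^*\equiv y^*$ for a fixed $y^*\in J(Tx_0)$: then $y_n^*(Tx_n)=y^*(Tx_0)=1$ and $\lim y_n^*(Ax_n)=y^*(Ax_0)$, so every element of $\Omega$ lies in $W$.

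The heart of the argument, and the step I expect to be the main obstacle, is the reverse inclusion $W\subseteq\Omega$, where the defining property of absolutely strongly exposing operators enters. I would take sequences as in the definition of $W$ and set $L=\lim y_n^*(Ax_n)$. From $y_n^*(Tx_n)\to 1$ together with $|y_n^*(Tx_n)|\le\|Tx_n\|\le \|T\|=1$ one gets $\|Tx_n\|\to\|T\|$, so by the $ASE$ property there are $\theta_n\in\mathbb T$ with $z_n:=\theta_n x_n\to x_0$. Put $\tilde y_n^*=\overline{\theta_n}\,y_n^*\in S_{\mathbb Y^*}$. Since $Tz_n=\theta_n Tx_n$ and $Az_n=\theta_n Ax_n$, a direct computation gives $\tilde y_n^*(Tz_n)=y_n^*(Tx_n)\to1$ and $\tilde y_n^*(Az_n)=y_n^*(Ax_n)\to L$. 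Because $Tz_n\to Tx_0$ and $Az_n\to Ax_0$ by continuity while $\|\tilde y_n^*\|=1$, replacing $z_n$ by $x_0$ costs only a vanishing error, whence $\tilde y_n^*(Tx_0)\to1$ and $\tilde y_n^*(Ax_0)\to L$. Choosing a weak*-cluster point $y^*$ of $\{\tilde y_n^*\}$ in the weak*-compact ball $B_{\mathbb Y^*}$, weak*-continuity of the evaluations at $Tx_0$ and $Ax_0$ forces $y^*(Tx_0)=1$ and $y^*(Ax_0)=L$. Thus $y^*\in J(Tx_0)$ and $L=y^*(Ax_0)\in\Omega$, which establishes $W\subseteq\Omega$ and hence the equivalence of (i) and (ii).

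For the ``moreover'' part, assume $M_T\subseteq M_A$. Since $x_0\in M_T\subseteq M_A$, we have $\|Ax_0\|=\|A\|=1$. By Lemma~\ref{Chem}, $Tx_0\perp_B^\epsilon Ax_0$ holds if and only if there is some $y^*\in J(Tx_0)$ with $|y^*(Ax_0)|\le\epsilon\|Ax_0\|=\epsilon$, that is, with $y^*(Ax_0)\in\mathcal D(\epsilon)$; this is precisely $\Omega\cap\mathcal D(\epsilon)\neq\emptyset$, which by the first part is equivalent to $T\perp_B^\epsilon A$, as required.
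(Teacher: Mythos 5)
Your proposal is correct and follows essentially the same route as the paper: reduce via Theorem~\ref{approximate:operator} to showing that the set of limits $\lim y_n^*(Ax_n)$ coincides with the convex set $\Omega$, using the $ASE$ property to pull the sequences back to $x_0$, and then settle the ``moreover'' part with Lemma~\ref{Chem}. Your only deviations are cosmetic and, if anything, slightly tidier: you rotate the functionals by $\overline{\theta_n}$ instead of passing to a convergent subsequence of $\{\theta_n\}$ as the paper does, and you correctly work with a weak*-cluster point of $\{\tilde y_n^*\}$ where the paper loosely asserts weak*-convergence of the whole sequence.
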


\begin{proof}
Following Theorem \ref{approximate:operator}, $T \perp_B^\epsilon A$ if and only if $ co(\Omega') \cap \mathcal{D}(\epsilon) \neq \emptyset,$ where
	\[\Omega' = \bigg\{\lim y_n^*(Ax_n): (x_n, y_n^*) \in S_\mathbb{X} \times S_{\mathbb{Y}^*}, \, \, \lim y_n^*(Tx_n)=\|T\| \bigg\}.\] 
	Clearly, $\Omega  \subseteq \Omega'.$ Since $J(Tx_0)$ is convex, it is easy to see that $\Omega$ is convex. Therefore, to complete the theorem, we only need to show $\Omega' \subseteq \Omega.$
	Suppose that $\lambda \in \Omega'.$ Then there exists $(x_n, y_n^*) \in S_\mathbb{X} \times S_{\mathbb{Y}^*}$ such that $y_n^*(Tx_n)\to \|T\|$ and $y_n^*(Ax_n) \to \lambda.$ Without loss of generality we may assume that $\|Tx_n\| \to c,$ a real number. As  $\|T\|= \lim y_n^*(Tx_n) \leq \lim\|Tx_n\| =c \leq \|T\|,$ it is clear that $\|Tx_n\| \to \|T\|.$ Now, since $T\in ASE(\mathbb{X}, \mathbb{Y})$,  there exists $x \in B_\mathbb{X}$ and $\{\theta_n\} \subset \mathbb{T}$ such that $\theta_n x_n \to x.$ Clearly, $x= \mu x_0,$ for some $\mu \in \mathbb{T}.$ Again, passing onto a subsequence, if necessary, we may assume that $\theta_n \to \theta \in \mathbb{T}.$  Then $x_{n} \to \mu\bar{\theta} x_0$ and so $Tx_{n}\to \mu\bar{\theta} Tx_0.$  Since $B_{\mathbb{Y}^*}$ is weak*-compact, it follows that there exists $y^* \in B_{\mathbb{Y}^*}$ such that $y_n^* \overset{w^*}{\to} y^*.$ 
	Thus  $y_{n}^*(Tx_{n}) \to y^*(\mu\bar{\theta}Tx_0)$. Then $\mu\bar{\theta}y^*(Tx_0)= \lim y_n^*(Tx_n)=\|T\|$ and so $\mu\bar{\theta}y^* \in J(Tx_0).$ Also, $y_n^*(Ax_n) \to \mu\bar{\theta}y^*(Ax_0).$ This implies $\mu\bar{\theta}y^*(Ax_0)= \lim y_n^*(Ax_n)= \lambda.$  Therefore, $\lambda \in \Omega.$

Let $M_T \subseteq M_A.$ So, $\|Ax_0\|= \|Tx_0\|=1.$	 Since $T \perp_B^\epsilon A,$ it follows that (ii) holds. Let $\rho \in \Omega \cap \mathcal{D}(\epsilon).$ Suppose that $\rho= y^*(Ax_0),$ for some $y^* \in J(Tx_0).$ As $|\rho| \leq \epsilon = \epsilon\|Ax_0\|,$  applying Lemma 2.2 we obtain that $Tx_0 \perp_B^{\epsilon} Ax_0.$
\end{proof}

In this context, we note that in \cite[Th. 3.2]{PSM}, considering $\mathbb{X}$ as reflexive and $T, A \in \mathbb{K}(\mathbb{X}, \mathbb{Y})$, it was shown that whenever $M_T \subseteq M_A, T \perp_B^\epsilon A $ if and only if there exists $x \in M_T$ such that $Tx \perp_B^\epsilon Ax.$ 

\begin{theorem}\label{BSP2}
	Let $\mathbb{X}$ and $\mathbb{Y}$ be two Banach spaces. Suppose that  $T \in ASE(\mathbb{X}, \mathbb{Y})$ with $M_T = \{\mu x_0: |\mu|=1\}.$  Then for any $A \in S_{\mathbb{L}(\mathbb{X}, \mathbb{Y})}$, 
 $T \perp_B A$ if and only if 
 $Tx_0 \perp_B Ax_0.$

\end{theorem}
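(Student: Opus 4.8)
The plan is to obtain Theorem \ref{BSP2} as the $\epsilon = 0$ instance of Theorem \ref{BSP}, so the work is almost entirely a matter of specializing the already-established equivalence rather than redoing the limiting argument. First I would apply Theorem \ref{BSP} with $\epsilon = 0$: since $\mathcal{D}(0) = \{z \in \mathbb{K} : |z| \le 0\} = \{0\}$, the statement (ii) there reads $\Omega \cap \{0\} \neq \emptyset$, which is exactly the condition $0 \in \Omega = \{y^*(Ax_0) : y^* \in J(Tx_0)\}$. Thus Theorem \ref{BSP} immediately gives that $T \perp_B A$ if and only if $0 \in \Omega$.

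The remaining task is to identify the condition $0 \in \Omega$ with the relation $Tx_0 \perp_B Ax_0$. Here I would invoke James's characterization, Lemma \ref{James}: for the vectors $Tx_0, Ax_0 \in \mathbb{Y}$, we have $Tx_0 \perp_B Ax_0$ if and only if there exists $y^* \in J(Tx_0)$ with $y^*(Ax_0) = 0$. Since $M_T = \{\mu x_0 : |\mu| = 1\}$ forces $\|Tx_0\| = \|T\| = 1$, so $Tx_0 \in S_{\mathbb{Y}}$ and $J(Tx_0)$ is the genuine set of supporting functionals at a unit vector, the hypotheses of Lemma \ref{James} are met. The existence of such a $y^*$ is precisely the assertion $0 \in \Omega$, closing the loop.

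Assembling these two equivalences yields $T \perp_B A \iff 0 \in \Omega \iff Tx_0 \perp_B Ax_0$, which is the claim. I would present this as a short direct argument rather than a self-contained proof, explicitly citing Theorem \ref{BSP} for the first equivalence and Lemma \ref{James} for the second.

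Since the heavy lifting (the $ASE$ limiting argument showing $\Omega' = \Omega$) is already carried out in the proof of Theorem \ref{BSP}, there is no genuine obstacle here; the only point demanding a moment's care is to notice that specializing to $\epsilon = 0$ collapses $\mathcal{D}(\epsilon)$ to the single point $\{0\}$ and hence converts the nonempty-intersection condition into the membership $0 \in \Omega$, which is exactly the shape needed to feed into James's theorem. One could alternatively view the result as a reproof that bypasses Theorem \ref{BSP} entirely by substituting the exact characterization of Birkhoff-James orthogonality for operators (Theorem \ref{martin1} applied to the embedding of $\mathbb{L}(\mathbb{X},\mathbb{Y})$ in $\ell_\infty(S_{\mathbb{X}}, \mathbb{Y})$) and running the same $ASE$ compactness reduction, but the specialization route is cleaner and is what I would write.
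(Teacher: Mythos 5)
Your proposal is correct and takes essentially the same approach as the paper: the paper's proof likewise reads off from Theorem~\ref{BSP} that $T \perp_B A$ if and only if $0 \in \Omega = \{y^*(Ax_0): y^* \in J(Tx_0)\}$ (i.e., the $\epsilon = 0$ case, where $\mathcal{D}(0)=\{0\}$) and then identifies $0 \in \Omega$ with $Tx_0 \perp_B Ax_0$ via Lemma~\ref{James}. The only cosmetic difference is that you spell out the specialization to $\epsilon = 0$ explicitly, which the paper leaves implicit.
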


\begin{proof}
	We only prove the necessary part as the sufficient part follows easily. 
	Following Theorem \ref{BSP}, $T \perp_B A$ if and only if $0 \in \Omega,$ where $\Omega= \{y^*(Ax_0): y^*\in J(Tx_0) \}.$ 
	Applying Lemma \ref{James}, we get $Tx_0 \perp_B Ax_0,$ as desired.
	
\end{proof}

 In \cite[Th. 2.4]{Kim}, it was proved that \emph{``For a real Banach space $\mathbb{X}$, if the closed unit  ball $B_{\mathbb{X}}$ is an RNP set, then the set of norm attaining operators satisfying the B\v{S} Property is dense in $\mathbb{L}(\mathbb{X}, \mathbb{Y}),$ for every Banach space $\mathbb{Y}$".} 
In the following result,  we prove this result for both real and complex Banach spaces.

\begin{cor}
	Let $\mathbb{X}$ be a Banach space with Radon-Nikodym Property and let $\mathbb{Y}$ be any Banach space. The set of norm attaining operators satisfying \emph{ B\v S-Property} is dense in $\mathbb{L}(\mathbb{X}, \mathbb{Y}).$
\end{cor}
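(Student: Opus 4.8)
The plan is to exhibit a dense subclass of $\mathbb{L}(\mathbb{X}, \mathbb{Y})$ whose every member is norm attaining and satisfies the B\v{S}-Property, and then to conclude by the obvious monotonicity of density. The natural candidate is the class $ASE(\mathbb{X}, \mathbb{Y})$ of absolutely strongly exposing operators, since Theorem \ref{BSP2} has already done the bulk of the work by verifying the B\v{S}-Property for such operators.

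First I would confirm that $ASE(\mathbb{X}, \mathbb{Y})$ is contained in the set of norm attaining operators satisfying the B\v{S}-Property. If $T \in ASE(\mathbb{X}, \mathbb{Y})$, then, as recorded just before Theorem \ref{BSP}, $M_T = \{\mu x_0 : |\mu|=1\}$ for some $x_0 \in S_{\mathbb{X}}$; in particular $x_0 \in M_T$, so $T$ attains its norm. For the B\v{S}-Property, Theorem \ref{BSP2} gives, for every $A \in S_{\mathbb{L}(\mathbb{X}, \mathbb{Y})}$, the equivalence $T \perp_B A \iff Tx_0 \perp_B Ax_0$; by the homogeneity of Birkhoff-James orthogonality in the second argument this extends to every nonzero $A$ (while the zero operator is trivial), which is exactly the B\v{S}-Property witnessed by $x_0 \in M_T$. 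Hence $ASE(\mathbb{X}, \mathbb{Y})$ lies inside the target set.

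It then remains to invoke the density of $ASE(\mathbb{X}, \mathbb{Y})$ in $\mathbb{L}(\mathbb{X}, \mathbb{Y})$. Since $\mathbb{X}$ enjoys the Radon-Nikodym Property, the Bourgain-Stegall type variational result underlying the theory of such operators (see \cite{Jung}) guarantees that $ASE(\mathbb{X}, \mathbb{Y})$ is dense in $\mathbb{L}(\mathbb{X}, \mathbb{Y})$ for every Banach space $\mathbb{Y}$. Combining this with the inclusion of the previous paragraph shows at once that the larger set of norm attaining operators satisfying the B\v{S}-Property is dense, completing the argument. The genuinely hard input is precisely this density statement, resting on the RNP; it is the analogue, now valid over both $\mathbb{R}$ and $\mathbb{C}$, of \cite[Th. 2.4]{Kim}. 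The point of routing through $ASE(\mathbb{X}, \mathbb{Y})$ and Theorem \ref{BSP2} is exactly that it removes the restriction to real scalars present in \cite{Kim}, since Theorem \ref{BSP2} supplies the B\v{S}-Property for absolutely strongly exposing operators regardless of the scalar field.
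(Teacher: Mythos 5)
Your proposal is correct and follows essentially the same route as the paper: pass through the class $ASE(\mathbb{X}, \mathbb{Y})$, use Theorem \ref{BSP2} to see that each absolutely strongly exposing operator is norm attaining and satisfies the B\v{S}-Property, and invoke the RNP-based density of $ASE(\mathbb{X}, \mathbb{Y})$ (the paper cites Bourgain \cite[Th. 5]{B} for this, rather than \cite{Jung}, but it is the same underlying result). Your explicit handling of the normalization $A \in S_{\mathbb{L}(\mathbb{X}, \mathbb{Y})}$ via homogeneity is a minor point of extra care that the paper leaves implicit.
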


\begin{proof}
	Since  $\mathbb{X}$ satisfies Radon-Nikodym Property, it follows from  \cite[Th. 5]{B} that for any given Banach space $\mathbb{Y},$ $ASE(\mathbb{X}, \mathbb{Y})$ is dense in $\mathbb{L}(\mathbb{X}, \mathbb{Y}).$  Following Theorem \ref{BSP2}, when $T \in ASE(\mathbb{X}, \mathbb{Y})$, we have that $T$ satisfies \emph{B\v{S} property}, thus completing the proof.
\end{proof}

  We next study the subspace $\mathbb{K}(\mathbb{X}, \mathbb{Y})$ of $ \mathbb{L}(\mathbb{X}, \mathbb{Y}),$ from the perspective of best coapproximation.

\begin{theorem}\label{anti}
	Let $\mathbb{X}$ be a Banach space such that the set of all strongly exposed points of $B_\mathbb{X}$ separates $\mathbb{X}^*.$ Let $\mathbb{Y}$ be any Banach space. Then exactly one  of the following holds true:
	\begin{itemize} 
		\item[(i)] $\mathbb{K}(\mathbb{X}, \mathbb{Y})= \mathbb{L}(\mathbb{X}, \mathbb{Y}).$
		\item[(ii)] $\mathbb{K}(\mathbb{X}, \mathbb{Y})$ is anti-coproximinal in $\mathbb{L}(\mathbb{X}, \mathbb{Y}).$ 
	\end{itemize} 
\end{theorem}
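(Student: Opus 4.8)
The two alternatives exclude each other, since anti-coproximinality is a property of a proper subspace; hence it suffices to assume that $\mathbb{K}(\mathbb{X},\mathbb{Y})$ is a proper subspace of $\mathbb{L}(\mathbb{X},\mathbb{Y})$ and establish alternative (ii). By Definition \ref{epsilon}(i), I would verify that $\mathbb{K}(\mathbb{X},\mathbb{Y}) \not\perp_B A$ for every nonzero $A \in \mathbb{L}(\mathbb{X},\mathbb{Y})$. The plan is to produce, for each such $A$, a single compact operator $K$ with $K \not\perp_B A$, so that no net of compact operators can be Birkhoff--James orthogonal to $A$.

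First I would locate a suitable strongly exposed point. Since $A \neq 0$, its adjoint $A^*$ is nonzero, so there is some $y^* \in \mathbb{Y}^*$ with $A^* y^* \neq 0$ in $\mathbb{X}^*$. The hypothesis that the strongly exposed points of $B_{\mathbb{X}}$ separate $\mathbb{X}^*$ then furnishes a strongly exposed point $x_0 \in S_{\mathbb{X}}$, exposed by some $x_0^* \in J(x_0)$, such that $(A^* y^*)(x_0) = y^*(A x_0) \neq 0$; in particular $A x_0 \neq 0$.

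Next, setting $y_0 = A x_0 / \|A x_0\| \in S_{\mathbb{Y}}$, I would consider the rank-one (hence compact) operator $K \in \mathbb{K}(\mathbb{X},\mathbb{Y})$ defined by $Kx = x_0^*(x)\, y_0$, which has $\|K\| = 1$ and $K x_0 = y_0$. The crucial step is to check that $K \in ASE(\mathbb{X},\mathbb{Y})$ with $M_K = \{\mu x_0 : |\mu| = 1\}$: if $\{x_n\} \subset B_{\mathbb{X}}$ satisfies $\|K x_n\| = |x_0^*(x_n)| \to 1$, one chooses unimodular scalars $\theta_n$ so that $x_0^*(\theta_n x_n) = |x_0^*(x_n)| \to 1 = x_0^*(x_0)$, and the strong exposedness of $x_0$ by $x_0^*$ forces $\theta_n x_n \to x_0$. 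This is exactly where the geometric hypothesis is consumed, and I expect it to be the main obstacle, since it is what bridges the separation property and the operator-orthogonality machinery of Theorem \ref{BSP2}.

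Finally, I would invoke Theorem \ref{BSP2}, applicable to $K \in ASE(\mathbb{X},\mathbb{Y})$ with $M_K = \{\mu x_0 : |\mu| = 1\}$: it gives $K \perp_B A$ if and only if $K x_0 \perp_B A x_0$, that is $y_0 \perp_B A x_0$, that is $y_0 \perp_B y_0$ (because $A x_0 = \|A x_0\|\, y_0$ and orthogonality is homogeneous in its second argument). Since $\|y_0 - y_0\| = 0 < 1 = \|y_0\|$, we have $y_0 \not\perp_B y_0$, whence $K \not\perp_B A$ and therefore $\mathbb{K}(\mathbb{X},\mathbb{Y}) \not\perp_B A$. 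As $A$ was an arbitrary nonzero operator, $\mathbb{K}(\mathbb{X},\mathbb{Y})$ is anti-coproximinal in $\mathbb{L}(\mathbb{X},\mathbb{Y})$, which completes the dichotomy.
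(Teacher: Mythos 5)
Your proposal is correct and takes essentially the same approach as the paper: both use the separation hypothesis (through the adjoint) to locate a strongly exposed point $x_0$ with $Ax_0 \neq 0$, construct the rank-one operator $x \mapsto x_0^*(x)\,y_0$, verify it lies in $ASE(\mathbb{X},\mathbb{Y})$ so that Theorem \ref{BSP2} applies, and conclude via the impossibility of $y_0 \perp_B y_0$ for $y_0 \neq 0$. The only differences are cosmetic: you argue directly for every nonzero $A$ rather than by contradiction, and you normalize $Ax_0$ where the paper works with $w = Sz$ itself.
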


\begin{proof}
	Let us assume that $\mathbb{K}(\mathbb{X}, \mathbb{Y}) \subsetneqq \mathbb{L}(\mathbb{X}, \mathbb{Y}).$ 	Suppose on the contrary that $\mathbb{K}(\mathbb{X}, \mathbb{Y})$ is not anti-coproximinal in $\mathbb{L}(\mathbb{X}, \mathbb{Y})$. Then there exists $S \in \mathbb{L}(\mathbb{X}, \mathbb{Y}) \setminus \mathbb{K}(\mathbb{X}, \mathbb{Y}) $  such that $$\mathbb{K}(\mathbb{X}, \mathbb{Y}) \perp_B S.$$ Without loss of generality, assume that $\|S\|=1.$ Since $s\mhyphen Exp(B_\mathbb{X})$ separates $\mathbb{X}^*$ it is straightforward to see that  there exists $z \in s \mhyphen Exp(B_\mathbb{X})$ such that $Sz \neq 0.$  
	Suppose that $Sz=w$ and $z^* \in J(z)$ such that $M_{z^*}=\{\mu z: |\mu|=1\}.$ Define $T: \mathbb{X}\longrightarrow \mathbb{Y}$ by 
	$$Tx=z^*(x)w, ~ \text{for any}~ x\in \mathbb{X}.$$
	  Note that for any $x \in S_\mathbb{X},$ $\|Tx\| = |z^*(x)|\|w\| \leq \|w\|.$ As $\|Tz\|=|z^*(z)|\|w\|=\|w\|> |z^*(x)|\|w\|=\|Tx\|,$ for all $x\in S_\mathbb{X} \setminus \{\mu z: |\mu|=1\}$, so $\|T\|=\|w\|$ and $M_T=\{\mu z: |\mu|=1\}.$   Clearly, $T \in \mathbb{K}(\mathbb{X}, \mathbb{Y}).$ On the other hand, let $\{x_n\}\subset S_\mathbb{X}$ be such that $\|Tx_n\| \longrightarrow \|T\|=\|w\|.$ Therefore, $\lim \|z^*(x_{n}) w\|= \|T\|=\|w\|.$ This implies $\lim |z^*(x_{n})|=1.$ Therefore, there exists a sequence $\{\theta_n\} \subset \mathbb{T}$ such that $\theta_n z^*(x_n) \longrightarrow 1,$ which implies that $z^*(\theta_nx_n) \longrightarrow 1=z^*(z).$  Since $z$ is strongly exposed point of $B_{\mathbb{X}}$, we have $ \theta_n x_{n} \longrightarrow z.$ Therefore $T \in ASE(\mathbb{X}, \mathbb{Y}).$ Since $\mathbb{K}(\mathbb{X}, \mathbb{Y}) \perp_B S$ we have $T \perp_B S.$ As $M_T= \{\mu z: |\mu|=1\},$ applying Theorem \ref{BSP2}, we obtain that $Tz \perp_B Sz,$ a contradiction. This completes the proof of the theorem.  
\end{proof}

\begin{remark}
It is worth mentioning here that  there does not exist any best coapproximation to the identity operator $I$ out of  the subspace $\mathbb{K}(\mathbb{X}),$ whenever $\mathbb{X}$ is an infinite-dimensional Banach space  and $\mathbb{K}(\mathbb{X})$ is semi-M-ideal  in $span\{I, \mathbb{K}(\mathbb{X})\},$ (see  \cite[Th. 14]{Rao}).
\end{remark}

\begin{theorem}\label{strongly exposed}
	Let $\mathbb{X}, \mathbb{Y}$ be  Banach spaces. Suppose that  $B_{\mathbb{X}}$ is the closed convex hull of its strongly exposed points. Then exactly one  of the following holds true:
	\begin{itemize} 
		\item[(i)] $\mathbb{K}(\mathbb{X}, \mathbb{Y})= \mathbb{L}(\mathbb{X}, \mathbb{Y}).$
		\item[(ii)] $\mathbb{K}(\mathbb{X}, \mathbb{Y})$ is strongly anti-coproximinal in $\mathbb{L}(\mathbb{X}, \mathbb{Y}).$ 
	\end{itemize}
\end{theorem}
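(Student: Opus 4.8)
The plan is to mirror the proof of Theorem \ref{anti}, upgrading it from Birkhoff--James orthogonality to its approximate version. First I would dispose of the dichotomy: if $\mathbb{K}(\mathbb{X},\mathbb{Y}) = \mathbb{L}(\mathbb{X},\mathbb{Y})$ then (i) holds and there is nothing to prove, so I would assume $\mathbb{K}(\mathbb{X},\mathbb{Y}) \subsetneq \mathbb{L}(\mathbb{X},\mathbb{Y})$ and argue by contradiction that (ii) fails. Thus there would exist $S \in \mathbb{L}(\mathbb{X},\mathbb{Y}) \setminus \mathbb{K}(\mathbb{X},\mathbb{Y})$, normalized to $\|S\|=1$, and $\epsilon \in [0,1)$ with $\mathbb{K}(\mathbb{X},\mathbb{Y}) \perp_B^\epsilon S$. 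The goal is to extract the bound $\|S\| \le \epsilon < 1$ from this, contradicting $\|S\|=1$.

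The engine is a family of rank-one absolutely strongly exposing operators indexed by the strongly exposed points. For each $z \in s\mhyphen Exp(B_{\mathbb{X}})$ with $Sz \ne 0$, I would fix $z^* \in J(z)$ with $M_{z^*} = \{\mu z : |\mu|=1\}$ and set $T_z x = z^*(x)\,Sz$. Exactly as in the proof of Theorem \ref{anti}, one checks $M_{T_z} = \{\mu z : |\mu|=1\}$, $\|T_z\| = \|Sz\|$, and $T_z \in ASE(\mathbb{X},\mathbb{Y})$ (the strong exposedness of $z$ converts $\|T_z x_n\| \to \|T_z\|$ into $\theta_n x_n \to z$ for suitable $\theta_n \in \mathbb{T}$). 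Normalizing gives $\widetilde{T}_z = T_z/\|Sz\| \in S_{\mathbb{K}(\mathbb{X},\mathbb{Y})}$, so $\widetilde{T}_z \perp_B^\epsilon S$. Now I would apply Theorem \ref{BSP} with $T = \widetilde{T}_z$, $x_0 = z$, $A = S$: since $\widetilde{T}_z z = Sz/\|Sz\|$ we have $J(\widetilde{T}_z z) = J(Sz)$, whence $\Omega = \{y^*(Sz) : y^* \in J(Sz)\} = \{\|Sz\|\}$ collapses to a single point. The criterion $\Omega \cap \mathcal{D}(\epsilon) \ne \emptyset$ then reads precisely $\|Sz\| \le \epsilon$. (For $z$ with $Sz = 0$ this bound is trivial.) Hence $\|Sz\| \le \epsilon$ for \emph{every} strongly exposed point $z$.

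It remains to globalize this pointwise estimate, and this is where the hypothesis that $B_{\mathbb{X}}$ is the closed convex hull of its strongly exposed points (rather than merely that they separate $\mathbb{X}^*$, as in Theorem \ref{anti}) does the decisive work. The map $x \mapsto \|Sx\|$ is convex and $1$-Lipschitz, hence continuous; it is bounded by $\epsilon$ on $s\mhyphen Exp(B_{\mathbb{X}})$, therefore by convexity on $co(s\mhyphen Exp(B_{\mathbb{X}}))$ and by continuity on its closure $\overline{co(s\mhyphen Exp(B_{\mathbb{X}}))} = B_{\mathbb{X}}$. Thus $\|S\| = \sup_{x \in B_{\mathbb{X}}} \|Sx\| \le \epsilon < 1$, contradicting $\|S\|=1$. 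The main conceptual point, and the reason the conclusion strengthens from anti-coproximinal to \emph{strongly} anti-coproximinal, is exactly this last step: approximate orthogonality only yields the weak local bound $\|Sz\| \le \epsilon$ at each strongly exposed point, so a single such point no longer suffices (as it did in Theorem \ref{anti}, where exact orthogonality forced the contradiction $Sz \perp_B Sz$); one genuinely needs the full unit ball to be reconstructible from its strongly exposed points in order to propagate the bound to all of $S_{\mathbb{X}}$. I expect the only routine verifications to be the $ASE$-membership of $T_z$ and the identity $\Omega = \{\|Sz\|\}$, both of which are direct.
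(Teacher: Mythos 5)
Your proposal is correct and is essentially the paper's own proof: the same rank-one $ASE$ operator $x \mapsto z^*(x)Sz$ built at a strongly exposed point, the same application of Theorem \ref{BSP} collapsing $\Omega$ to the singleton $\{\|Sz\|\}$, and the same supremum argument over $\overline{co(s\mhyphen Exp(B_{\mathbb{X}}))} = B_{\mathbb{X}}$. The only difference is the order of the two steps: the paper first uses the convex-hull hypothesis to locate a single strongly exposed point $z$ with $\|Sz\| > \epsilon$ and then contradicts that via Theorem \ref{BSP}, whereas you apply Theorem \ref{BSP} at every strongly exposed point to get $\|Sz\| \le \epsilon$ and globalize afterwards --- a purely cosmetic rearrangement.
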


\begin{proof}
	Let us assume that $\mathbb{K}(\mathbb{X}, \mathbb{Y}) \subsetneqq \mathbb{L}(\mathbb{X}, \mathbb{Y}).$
	Suppose on the contrary that $\mathbb{K}(\mathbb{X}, \mathbb{Y})$ is not strongly  anti-coproximinal in $\mathbb{L}(\mathbb{X}, \mathbb{Y})$. Then there exists $S \in \mathbb{L}(\mathbb{X}, \mathbb{Y}) \setminus \mathbb{K}(\mathbb{X}, \mathbb{Y}) $ and $\epsilon \in [0,1)$ such that $$\mathbb{K}(\mathbb{X}, \mathbb{Y}) \perp_B^\epsilon S.$$ Without loss of generality we assume $\|S\|=1.$ Since $B_\mathbb{X}$ is the closed convex hull of its strongly exposed points, it follows that there exists $z \in S_\mathbb{X}$ such that $\|Sz\| > \epsilon,$ where $z$ is a strongly exposed point of $B_\mathbb{X}.$ Otherwise, from convexity of norm we can write 
	\begin{eqnarray*}
		\|S\|= \sup \{\|Sx\|: x \in B_{\mathbb{X}}\} &=& \sup \{\|Sx\|: x \in \overline{co(s \mhyphen Exp(B_{\mathbb{X}})) }\} \\ 
		&=& \sup \{\|Sx\|: x \in co(s \mhyphen Exp(B_{\mathbb{X}})) \}\\
		&=& \sup \{\|Sx\|: x \in s \mhyphen Exp(B_{\mathbb{X}})\} \leq \epsilon < 1.
	\end{eqnarray*}
	Suppose that $Sz=w$ and $z^* \in J(z)$ such that $M_{z^*}=\{\mu z: |\mu|=1\}.$ Now we define $T \in \mathbb{L}(\mathbb{X}, \mathbb{Y})$ such that $Tx=z^*(x) w$ for all $x \in \mathbb{X}.$ Now proceeding similarly as in the proof of Theorem \ref{anti}, we obtain that  $T \in ASE(\mathbb{X}, \mathbb{Y}).$ Clearly,  $M_T = \{\mu z: |\mu|=1\}.$
	 Since $\mathbb{K}(\mathbb{X}, \mathbb{Y}) \perp_B^\epsilon S$ we have $T \perp_B^\epsilon S.$ Following Theorem \ref{BSP},  $ \Omega \cap \mathcal{D}(\epsilon)\neq \emptyset,$ where $\Omega = \{ y^*(Sz):  y^*\in J(Tz) \}.$ Since $Tz= Sz =w,$ it follows that  $\Omega= \{\|w\|\}.$ However, $\|w\|= \|Sz\| > \epsilon,$ a contradiction. 
	This establishes the theorem.

\end{proof}

From \cite[p. 121]{DP}, we note that a Banach space $\mathbb{X}$ satisfies the Radon-Nikodym Property if and only if every bounded subset of $\mathbb{X}$ is dentable. Also, in \cite[Th. 9]{Ph} Phelps proved that  every bounded subset of $\mathbb{X}$ is dentable if and only if every bounded closed convex subset of $\mathbb{X}$ is the closed convex hull of its strongly exposed points. Combining these results with Theorem \ref{strongly exposed}, we obtain the following results.

\begin{cor}
	Suppose that $\mathbb{X}$ is a Banach space with the Radon-Nikodym Property. Then either of the following holds true:
	\begin{itemize}
		\item[(i)] $\mathbb{K}(\mathbb{X}, \mathbb{Y})= \mathbb{L}(\mathbb{X}, \mathbb{Y}).$
		\item[(ii)] $\mathbb{K}(\mathbb{X}, \mathbb{Y})$ is strongly anti-coproximinal in $\mathbb{L}(\mathbb{X}, \mathbb{Y}).$ 
	\end{itemize}
\end{cor}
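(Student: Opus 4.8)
The plan is to deduce this corollary immediately from Theorem~\ref{strongly exposed} by verifying that its geometric hypothesis---that $B_{\mathbb{X}}$ is the closed convex hull of its strongly exposed points---is automatic once $\mathbb{X}$ has the Radon-Nikodym Property. The entire argument is a concatenation of the two equivalences stated in the paragraph preceding the corollary with the already-established dichotomy of Theorem~\ref{strongly exposed}.

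First I would recall the characterization from \cite[p.~121]{DP}: a Banach space $\mathbb{X}$ enjoys the Radon-Nikodym Property precisely when every bounded subset of $\mathbb{X}$ is dentable. Next I would invoke Phelps' theorem \cite[Th.~9]{Ph}, which asserts that dentability of every bounded subset of $\mathbb{X}$ is equivalent to every bounded closed convex subset of $\mathbb{X}$ being the closed convex hull of its strongly exposed points. Chaining these two equivalences, and observing that the unit ball $B_{\mathbb{X}}$ is itself a bounded closed convex set, we conclude that $B_{\mathbb{X}}$ is the closed convex hull of its strongly exposed points.

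This is exactly the hypothesis required to apply Theorem~\ref{strongly exposed}, which then yields the stated dichotomy: either $\mathbb{K}(\mathbb{X},\mathbb{Y})=\mathbb{L}(\mathbb{X},\mathbb{Y})$, or $\mathbb{K}(\mathbb{X},\mathbb{Y})$ is strongly anti-coproximinal in $\mathbb{L}(\mathbb{X},\mathbb{Y})$. Since the reasoning is a direct reduction to a previously proved theorem through cited equivalences, I do not expect any genuine obstacle; the only point deserving a moment's care is confirming that Phelps' equivalence is applied to $B_{\mathbb{X}}$ specifically, qua bounded closed convex subset, which is immediate from its definition.
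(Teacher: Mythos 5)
Your proposal is correct and matches the paper's argument exactly: the paper also chains the Davis--Phelps characterization of RNP via dentability with Phelps' theorem identifying dentability with the strongly-exposed-points property for bounded closed convex sets, applies this to $B_{\mathbb{X}}$, and then invokes Theorem \ref{strongly exposed} to obtain the dichotomy. There is nothing to add.
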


\begin{cor}
 Let $\mathbb{X}$ be a Banach space and let $\mathbb{Y}$ be a reflexive Banach space. Then either of the following holds true:
 \begin{itemize}
 	\item[(i)] $\mathbb{K}(\mathbb{X}, \mathbb{Y})= \mathbb{L}(\mathbb{X}, \mathbb{Y}).$
 	\item[(ii)] $\mathbb{K}(\mathbb{X}, \mathbb{Y})$ is strongly anti-coproximinal in $\mathbb{L}(\mathbb{X}, \mathbb{Y}).$ 
 \end{itemize}
\end{cor}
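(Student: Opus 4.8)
The plan is to reduce the statement to the \emph{dual} pair $(\mathbb{Y}^*, \mathbb{X}^*)$ by means of the adjoint operation, and then invoke the dichotomy already available for domain spaces with the Radon--Nikodym Property. We may assume that case (i) fails, i.e. $\mathbb{K}(\mathbb{X}, \mathbb{Y}) \subsetneq \mathbb{L}(\mathbb{X}, \mathbb{Y})$, and must show that $\mathbb{K}(\mathbb{X}, \mathbb{Y})$ is strongly anti-coproximinal in $\mathbb{L}(\mathbb{X}, \mathbb{Y})$. The governing observation is that strong anti-coproximinality of a subspace is an isometric invariant of the pair (subspace, ambient space), so it suffices to produce an isometric isomorphism of pairs between $\mathbb{K}(\mathbb{X},\mathbb{Y}) \subset \mathbb{L}(\mathbb{X},\mathbb{Y})$ and $\mathbb{K}(\mathbb{Y}^*,\mathbb{X}^*) \subset \mathbb{L}(\mathbb{Y}^*,\mathbb{X}^*)$, and then settle the question in the dual.

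First I would set up the adjoint map $\Phi : \mathbb{L}(\mathbb{X}, \mathbb{Y}) \to \mathbb{L}(\mathbb{Y}^*, \mathbb{X}^*)$, $\Phi(T) = T^*$. This is a linear isometry, and by Schauder's theorem it carries $\mathbb{K}(\mathbb{X}, \mathbb{Y})$ into $\mathbb{K}(\mathbb{Y}^*, \mathbb{X}^*)$. The crucial point --- and the one step that genuinely uses reflexivity of $\mathbb{Y}$ --- is that $\Phi$ is \emph{onto}: given any $R \in \mathbb{L}(\mathbb{Y}^*, \mathbb{X}^*)$, the operator $T := R^*|_{\mathbb{X}} : \mathbb{X} \to \mathbb{Y}^{**} = \mathbb{Y}$ (identifying $\mathbb{Y}^{**}$ with $\mathbb{Y}$ via reflexivity) satisfies $T^* = R$, as one verifies by pairing $\langle T^* y^*, x\rangle = \langle y^*, Tx\rangle = \langle Ry^*, x\rangle$ for all $x \in \mathbb{X}$, $y^* \in \mathbb{Y}^*$. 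Invoking Schauder's theorem once more in the reverse direction, $\Phi$ maps $\mathbb{K}(\mathbb{X}, \mathbb{Y})$ \emph{onto} $\mathbb{K}(\mathbb{Y}^*, \mathbb{X}^*)$. Thus $\Phi$ is the desired isometric isomorphism of pairs. Since $\mathbb{Y}$ is reflexive, so is $\mathbb{Y}^*$, and hence $\mathbb{Y}^*$ enjoys the Radon--Nikodym Property; applying the preceding Radon--Nikodym corollary to the pair $(\mathbb{Y}^*, \mathbb{X}^*)$ (equivalently, noting that $B_{\mathbb{Y}^*}$ is then the closed convex hull of its strongly exposed points and using Theorem \ref{strongly exposed}) gives the dichotomy for $\mathbb{K}(\mathbb{Y}^*, \mathbb{X}^*)$ in $\mathbb{L}(\mathbb{Y}^*, \mathbb{X}^*)$. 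As $\Phi$ is onto and $\mathbb{K}(\mathbb{X}, \mathbb{Y}) \subsetneq \mathbb{L}(\mathbb{X}, \mathbb{Y})$, we have $\mathbb{K}(\mathbb{Y}^*, \mathbb{X}^*) \subsetneq \mathbb{L}(\mathbb{Y}^*, \mathbb{X}^*)$, so case (i) is excluded in the dual and $\mathbb{K}(\mathbb{Y}^*, \mathbb{X}^*)$ is strongly anti-coproximinal in $\mathbb{L}(\mathbb{Y}^*, \mathbb{X}^*)$.

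Finally I would transfer this conclusion back along $\Phi$. Being a linear isometry, $\Phi$ preserves $\epsilon$-Birkhoff--James orthogonality: for every $\lambda$ one has $\|\Phi(T) + \lambda \Phi(S)\| = \|T + \lambda S\|$, $\|\Phi(T)\| = \|T\|$ and $\|\lambda\Phi(S)\| = \|\lambda S\|$, so $T \perp_B^\epsilon S$ if and only if $T^* \perp_B^\epsilon S^*$. Concretely, given a nonzero $S \in \mathbb{L}(\mathbb{X}, \mathbb{Y}) \setminus \mathbb{K}(\mathbb{X}, \mathbb{Y})$ and $\epsilon \in [0,1)$, strong anti-coproximinality of $\mathbb{K}(\mathbb{Y}^*, \mathbb{X}^*)$ applied to $S^*$ furnishes $R \in \mathbb{K}(\mathbb{Y}^*, \mathbb{X}^*)$ with $R \not\perp_B^\epsilon S^*$; writing $R = T^*$ with $T \in \mathbb{K}(\mathbb{X}, \mathbb{Y})$ yields $T \not\perp_B^\epsilon S$, whence $\mathbb{K}(\mathbb{X}, \mathbb{Y}) \not\perp_B^\epsilon S$, as required. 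The main obstacle is precisely the surjectivity of the adjoint map onto all of $\mathbb{L}(\mathbb{Y}^*, \mathbb{X}^*)$: this is where reflexivity of $\mathbb{Y}$ is indispensable (it fails without it), and it is exactly what permits one to pull back a compact operator on the predual side from the compact operator on the dual side produced by the Radon--Nikodym dichotomy.
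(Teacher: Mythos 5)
Your proposal is correct and follows essentially the same route as the paper's proof: both dualize via the adjoint map $T \mapsto T^*$ (using reflexivity of $\mathbb{Y}$ for surjectivity onto $\mathbb{K}(\mathbb{Y}^*,\mathbb{X}^*)$ and Schauder's theorem), note that the isometry preserves $\epsilon$-Birkhoff--James orthogonality, and then apply Theorem \ref{strongly exposed} to the pair $(\mathbb{Y}^*, \mathbb{X}^*)$ since $\mathbb{Y}^*$ is reflexive and hence has the Radon--Nikodym Property. The only difference is presentational: the paper argues by contradiction while you argue directly and spell out the surjectivity of the adjoint map in more detail, but the key ingredients are identical.
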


\begin{proof}
	Let us assume that $\mathbb{K}(\mathbb{X}, \mathbb{Y}) \subsetneqq \mathbb{L}(\mathbb{X}, \mathbb{Y}).$ Suppose on the contrary that  $\mathbb{K}(\mathbb{X}, \mathbb{Y})$ is not strongly anti-coproximinal in $\mathbb{L}(\mathbb{X}, \mathbb{Y}).$ Then there exists $S \in \mathbb{L}(\mathbb{X}, \mathbb{Y}) \setminus  \mathbb{K}(\mathbb{X}, \mathbb{Y})$ and $\epsilon \in [0,1)$ such that $ \mathbb{K}(\mathbb{X}, \mathbb{Y}) \perp_B^\epsilon S.$ So, for any $T \in \mathbb{K}(\mathbb{X}, \mathbb{Y})$, $T \perp_B^\epsilon S.$ It is straightforward to see that $T^* \perp_B^\epsilon S^*.$
	Since $\mathbb{Y}$ is reflexive, for any $A \in \mathbb{K}(\mathbb{Y}^*, \mathbb{X}^*)$ there exists $T \in \mathbb{K}(\mathbb{X}, \mathbb{Y})$ such that $T^*= A.$ So, for any $A \in \mathbb{K}(\mathbb{Y}^*, \mathbb{X}^*)$, $A \perp_B^\epsilon S^*.$ This implies $\mathbb{K}(\mathbb{Y}^*, \mathbb{X}^*) \perp_B^\epsilon S^*.$ As $\mathbb{Y}$ is reflexive, applying Theorem \ref{strongly exposed}, either $\mathbb{K}(\mathbb{Y}^*, \mathbb{X}^*)= \mathbb{L}(\mathbb{Y}^*, \mathbb{X}^*)$ or $\mathbb{K}(\mathbb{Y}^*, \mathbb{X}^*)$ is strongly anti-coproximinal in $\mathbb{L}(\mathbb{Y}^*, \mathbb{X}^*),$ both of which lead to a contradiction, thus completing the proof.
\end{proof}

		\begin{prop}\label{Hilbert}
			Let $\mathbb{H}$ be a Hilbert space and let $\mathbb{Y}$ be a proper subspace of $\mathbb{L}(\mathbb{H}).$ Suppose that for any $x,y \in S_{\mathbb{H}},$ there exists an $A \in \mathbb{Y}$ such that $Ax=y$ and $M_A= \{ \pm x\}.$ Then $\mathbb{Y}$ is strongly anti-coproximinal.
		\end{prop}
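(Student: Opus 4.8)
The plan is to argue by contradiction, mirroring the strategy of the proofs of Theorem \ref{anti} and Theorem \ref{strongly exposed}, but exploiting the self-duality and smoothness of $\mathbb{H}$ to collapse the relevant set $\Omega$ to a single point. Suppose $\mathbb{Y}$ is not strongly anti-coproximinal in $\mathbb{L}(\mathbb{H})$. Then by Definition \ref{epsilon}(ii) there exist a nonzero $S \in \mathbb{L}(\mathbb{H})$ and $\epsilon \in [0,1)$ with $\mathbb{Y} \perp_B^\epsilon S$; after normalising we may take $\|S\| = 1$. Since $\|S\| = 1 > \epsilon$, I can choose $x \in S_{\mathbb{H}}$ with $\|Sx\| > \epsilon$, and set $y := Sx/\|Sx\| \in S_{\mathbb{H}}$. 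By hypothesis there is $A \in \mathbb{Y}$ with $Ax = y$ and $M_A = \{\mu x : |\mu|=1\}$ (written $\{\pm x\}$ in the real case); in particular $\|A\| = \|Ax\| = \|y\| = 1$, so $A \in S_{\mathbb{L}(\mathbb{H})}$, and $\mathbb{Y} \perp_B^\epsilon S$ forces $A \perp_B^\epsilon S$.

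The main step is to show that $A$ is an absolutely strongly exposing operator with $M_A = \{\mu x : |\mu|=1\}$, so that Theorem \ref{BSP} applies with $T = A$ and $x_0 = x$. To this end I would pass to the positive operator $A^*A$, for which $\|A\|=1$ gives $0 \le A^*A \le I$ and $\|A^*A\|=1$. The equality $\langle A^*A x, x\rangle = \|Ax\|^2 = 1 = \|x\|^2$ together with $A^*A \le I$ forces $A^*A x = x$; hence, writing a unit vector as $u = a x + z$ with $z \perp x$, one finds $\|Au\|^2 = \langle A^*A u, u\rangle = |a|^2 + \langle A^*A z, z\rangle$. The uniqueness $M_A = \{\mu x\}$ says precisely that $\langle A^*A z, z\rangle < 1$ for every unit $z \perp x$. \emph{The hard part} is to upgrade this pointwise strict inequality to a uniform gap $\sup\{\langle A^*A z, z\rangle : z \in S_{\mathbb{H}},\, z \perp x\} < 1$; granting this, $\|Au_n\| \to 1$ forces the $x$-component of $u_n$ to have modulus tending to $1$, i.e.\ $\theta_n u_n \to x$ for suitable $\theta_n \in \mathbb{T}$, which is exactly $A \in ASE(\mathbb{H})$. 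I expect this gap to be the principal obstacle: it is automatic when the top of the spectrum of $A^*A$ is an isolated eigenvalue (for instance when $A$ is compact), but uniqueness of the \emph{exact} maximiser does not, on its own, prevent \emph{approximate} maximisers from escaping to a part of the spectrum accumulating at $1$.

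Once $A \in ASE(\mathbb{H})$ is secured, the conclusion is immediate. Applying Theorem \ref{BSP} to $T = A$ and to the operator $S$, the relation $A \perp_B^\epsilon S$ gives $\Omega \cap \mathcal{D}(\epsilon) \neq \emptyset$, where $\Omega = \{\, y^*(Sx) : y^* \in J(Ax)\,\}$. Here I would invoke the smoothness of $\mathbb{H}$: for the unit vector $y = Ax$ the supporting functional is unique, namely $J(y) = \{\langle \cdot, y\rangle\}$ by the equality case of Cauchy--Schwarz. Therefore
\[
\Omega = \{\langle Sx, y\rangle\} = \left\{\left\langle Sx, \frac{Sx}{\|Sx\|}\right\rangle\right\} = \{\|Sx\|\}.
\]
Since $\|Sx\| > \epsilon$, the singleton $\Omega$ is disjoint from $\mathcal{D}(\epsilon)$, contradicting $\Omega \cap \mathcal{D}(\epsilon) \neq \emptyset$. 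Hence no such $S$ and $\epsilon$ exist, and $\mathbb{Y}$ is strongly anti-coproximinal in $\mathbb{L}(\mathbb{H})$.
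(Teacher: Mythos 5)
Your skeleton and your closing computation coincide with the paper's: argue by contradiction, normalise $\|S\|=1$, pick $x$ with $\|Sx\|>\epsilon$, take $A\in\mathbb{Y}$ with $Ax=Sx/\|Sx\|$ and $M_A=\{\pm x\}$, and use the fact that in a Hilbert space $J(Ax)=\{\langle\cdot,Ax\rangle\}$ is a singleton, so the relevant set of functional values collapses to $\{\|Sx\|\}$, which misses $\mathcal{D}(\epsilon)$. The gap is exactly where you flag it, and it is genuine: your route passes through Theorem \ref{BSP}, which needs $A\in ASE(\mathbb{H})$, and the hypothesis cannot deliver this. Uniqueness of the exact maximiser, $M_A=\{\pm x\}$, does not give the uniform gap $\sup\{\langle A^*Az,z\rangle: z\in S_{\mathbb{H}},\, z\perp x\}<1$; for instance $A=\mathrm{diag}(1,\frac12,\frac23,\frac34,\dots)$ on $\ell_2$ has $M_A=\{\pm e_1\}$, yet $\{e_n\}$ is a near-maximising sequence that escapes, so $A\notin ASE(\ell_2)$. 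Since the hypothesis supplies only \emph{some} operator $A$ with $Ax=y$ and $M_A=\{\pm x\}$ --- not necessarily a rank-one, compact, or otherwise gap-possessing one --- the ASE step cannot be filled in, and your proof is incomplete as written.

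The paper does not attempt that step at all: in place of Theorem \ref{BSP} it cites the characterization of approximate Birkhoff--James orthogonality of Hilbert-space operators, \cite[Th. 3.1]{PSM}, to pass from $A\perp_B^\epsilon S$ directly to $|\langle Ax,Sx\rangle|\le\epsilon\|S\|\|A\|$, i.e.\ $\|Sx\|\le\epsilon\|S\|$; this is done for every $x$ with $Sx\neq 0$, and taking the supremum yields $\|S\|\le\epsilon\|S\|<\|S\|$. So where you have an unfinished argument, the paper has a one-line citation. You should be aware, however, that the obstacle you isolated is substantive and is precisely the burden that citation carries: for general bounded operators a pointwise implication at the unique norm-attaining point is simply false (with the same diagonal $A$ and $S=e_1\otimes e_1$ one has $A\perp_B S$, hence $A\perp_B^\epsilon S$, while $\langle Ae_1,Se_1\rangle=1>\epsilon$), so any correct characterization in \cite{PSM} must be sequential, and extracting the inequality at $x$ requires exactly the localization you showed can fail when $A$ is not ASE. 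A fully self-contained proof therefore needs either a strengthened hypothesis --- e.g.\ requiring the operators $A$ to lie in $ASE(\mathbb{H})$, which is what the Remark following the proposition implicitly uses, since the rank-one operators chosen there are absolutely strongly exposing --- or an additional argument exploiting the subspace structure of $\mathbb{Y}$; neither your proposal nor the paper's proof supplies this.
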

		
		\begin{proof}
			Suppose on the contrary that $\mathbb{Y}$ is not strongly anti-coproximinal. Then there exists $\epsilon \in [0,1), S \in \mathbb{L}(\mathbb{H}) \setminus \mathbb{Y}$ such that $\mathbb{Y} \perp_B^\epsilon S.$ Let $x \in S_{\mathbb{H}}.$ If $Sx \neq 0,$ let $y= \frac{Sx}{\|Sx\|}.$
			Take $A \in \mathbb{Y}$ such that $Ax=y$ and  $M_{A}=\{x\}.$ Clearly, $\|A\|=1.$ Since $A \perp_B^\epsilon S,$ applying \cite[Th. 3.1]{PSM}, 
			$$
			|\langle Ax, Sx \rangle| \leq  \epsilon \|S\| \|A\| \implies 
			\|Sx\| \leq  \epsilon \|S\|.
			$$
			This is true for every $x \in S_{\mathbb{H}}$ such that $Sx \neq 0.$ Observe that 
			\[
			\|S\|= \sup \{ \|Sx\|: x \in S_{\mathbb{H}}\} \leq \epsilon \|S\| < \|S\|,
			\]
			 a contradiction.
		\end{proof}
		
		\begin{remark}
			Let us consider the proper subspace $\mathbb{Y}$ of $\mathbb{L}(\mathbb{H}),$ consisting of the finite-rank operators. For any $x, y \in S_{\mathbb{H}},$ consider an operator $A \in \mathbb{L}(\mathbb{H})$ such that $Ax=y$ and $Az=0,$ for all $z \in x^\perp,$ where $x^\perp = \{z \in \mathbb{H}: x \perp_B z\}.$ It is trivial to see that $A \in \mathbb{Y}.$ Thus, from Proposition \ref{Hilbert}, we conclude that $\mathbb{Y}$ is a  strongly anti-coproximinal subspace of $\mathbb{L}(\mathbb{H}).$ 
		\end{remark}
		
		We end this article with a characterization of strongly anti-coproximinal subspaces in $\mathbb{L}(\mathbb{X}, \mathbb{Y}),$ whenever $\mathbb{X}, \mathbb{Y}$ are finite-dimensional real polyhedral Banach spaces.

		\begin{prop}
			Let $\mathbb{X}, \mathbb{Y}$ be two finite-dimensional real polyhedral Banach spaces. Then the following are equivalent:
			\begin{itemize}
				\item[(i)] $\mathbb{Z}$ is strongly anti-coproximinal in $\mathbb{L}(\mathbb{X}, \mathbb{Y})$
				\item[(ii)] For any $x \in Ext(B_{\mathbb{X}})$ and $y^* \in Ext(B_{\mathbb{Y}^*}),$ there exists $A \in \mathbb{Z}$ such that $M_{A}=\{ \pm x\}$ and $J(Ax)=\{y^*\}.$
			\end{itemize}
		\end{prop}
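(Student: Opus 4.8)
The plan is to recognise that $\mathbb{L}(\mathbb{X},\mathbb{Y})$ is again a finite-dimensional real polyhedral space and then to apply the known facet criterion for strong anti-coproximinality in such spaces (the Remark after Theorem~\ref{sufficient:interior}, i.e. \cite[Th.~2.20]{SSGP3}). Indeed, since $B_{\mathbb{X}}$ and $B_{\mathbb{Y}}$ are polytopes and we are in the real case, $\|T\|=\max\{y^*(Tx):x\in Ext(B_{\mathbb{X}}),\,y^*\in Ext(B_{\mathbb{Y}^*})\}$, so $\|T\|\le 1$ is equivalent to the finite linear system $y^*(Tx)\le 1$, $x\in Ext(B_{\mathbb{X}})$, $y^*\in Ext(B_{\mathbb{Y}^*})$; hence $B_{\mathbb{L}(\mathbb{X},\mathbb{Y})}$ is a polytope. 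Consequently (i) is equivalent to the geometric statement that $\mathbb{Z}$ meets the relative interior of every facet (maximal face) of $B_{\mathbb{L}(\mathbb{X},\mathbb{Y})}$, and the whole task reduces to translating this into (ii).

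For $x\in\mathbb{X}$ and $y^*\in\mathbb{Y}^*$ write $\Phi_{x,y^*}$ for the functional $T\mapsto y^*(Tx)$ on $\mathbb{L}(\mathbb{X},\mathbb{Y})$. The computational core of the proof is the equivalence
\[
J(A)=\{\Phi_{x,y^*}\}\iff M_A=\{\pm x\}\ \text{and}\ J(Ax)=\{y^*\},
\]
valid for $A\in S_{\mathbb{L}(\mathbb{X},\mathbb{Y})}$, $x\in Ext(B_{\mathbb{X}})$, $y^*\in Ext(B_{\mathbb{Y}^*})$. For $(\Leftarrow)$ I would first note $\Phi_{x,y^*}\in J(A)$, then expand an arbitrary $\Psi\in J(A)$ as a convex combination $\sum_k t_k\Phi_{x_k,y_k^*}$ of extreme functionals of $B_{\mathbb{L}(\mathbb{X},\mathbb{Y})^*}$; since $\sum_k t_k y_k^*(Ax_k)=1$ while $y_k^*(Ax_k)\le\|Ax_k\|\le 1$, every term must equal $1$, forcing each $x_k\in M_A=\{\pm x\}$ and then $y_k^*=\pm y^*$, so $\Psi=\Phi_{x,y^*}$. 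For $(\Rightarrow)$ I would use the finite-dimensional description of the operator-norm subdifferential, $J(A)=co\{\Phi_{v,w^*}:v\in M_A\cap S_{\mathbb{X}},\,w^*\in J(Av)\}$ (equivalently, the characterisation that $A$ is smooth exactly when $M_A=\{\pm x_0\}$ and $Ax_0\in Sm(\mathbb{Y})$): a one-point convex hull forces $v\otimes w^*=x\otimes y^*$ for every admissible pair, whence $M_A=\{\pm x\}$ and $J(Ax)=\{y^*\}$.

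Next I would identify the facets. By Lemma~\ref{nu index} the facets of $B_{\mathbb{L}(\mathbb{X},\mathbb{Y})}$ correspond bijectively to the extreme points of $B_{\mathbb{L}(\mathbb{X},\mathbb{Y})^*}$. As $B_{\mathbb{L}(\mathbb{X},\mathbb{Y})^*}$ is the convex hull of the finite symmetric set $\{\Phi_{x,y^*}:x\in Ext(B_{\mathbb{X}}),\,y^*\in Ext(B_{\mathbb{Y}^*})\}$, every extreme point is some $\Phi_{x,y^*}$; conversely each such $\Phi_{x,y^*}$ is extreme, since choosing $z^*\in S_{\mathbb{X}^*}$ exposing the vertex $x$ (so $z^*(v)<1$ for $v\ne x$) and $y$ in the relative interior of the facet of $B_{\mathbb{Y}}$ with normal $y^*$ (so $J(y)=\{y^*\}$), the rank-one operator $A_0=z^*\otimes y$ satisfies $M_{A_0}=\{\pm x\}$ and $J(A_0x)=\{y^*\}$, whence by $(\Leftarrow)$ above $\Phi_{x,y^*}$ is the unique support functional of the smooth point $A_0$. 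Since in a polytope $A\in int(F)$ for the facet $F$ with normal $\Phi_{x,y^*}$ precisely when $J(A)=\{\Phi_{x,y^*}\}$, the central equivalence turns ``$\mathbb{Z}$ meets $int(F)$'' into ``there is $A\in\mathbb{Z}$ (which we may take of norm one, $\mathbb{Z}$ being a subspace) with $M_A=\{\pm x\}$ and $J(Ax)=\{y^*\}$''. Ranging over all facets, that is over all $(x,y^*)\in Ext(B_{\mathbb{X}})\times Ext(B_{\mathbb{Y}^*})$, identifies (i) with (ii).

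The main obstacle is the $(\Rightarrow)$ half of the central equivalence: recovering the full norm-attainment set $M_A$ and the smoothness of $Ax$ from the single datum that $J(A)$ is a singleton. This rests on the subdifferential (equivalently, smoothness) structure of operators between finite-dimensional polyhedral spaces, which I would take as the key imported ingredient; everything else is bookkeeping with convex combinations of the functionals $\Phi_{x,y^*}$ and the elementary face structure of polytopes.
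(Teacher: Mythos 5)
Your proposal is correct and takes essentially the same route as the paper: both arguments reduce the statement, via the polyhedral criterion of \cite[Th. 2.20]{SSGP3}, to the identification of the extreme points of $B_{\mathbb{L}(\mathbb{X},\mathbb{Y})^*}$ as the elementary tensor functionals $y^*\otimes x$ with $x\in Ext(B_{\mathbb{X}})$, $y^*\in Ext(B_{\mathbb{Y}^*})$, together with the equivalence $J(A)=\{y^*\otimes x\}\iff M_A=\{\pm x\}$ and $J(Ax)=\{y^*\}$. The differences are purely expository: the paper cites Ruess--Stegall \cite[Th. 1.3]{RS82} for the extreme-point identification, uses the $\mathcal{J}_{\mathbb{Z}}=Ext(B_{\mathbb{L}(\mathbb{X},\mathbb{Y})^*})$ formulation of Th.~2.20, and dismisses the central equivalence as straightforward, whereas you work with the equivalent facet-interior formulation and supply self-contained finite-dimensional proofs of both ingredients.
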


		\begin{proof}
			As $\mathbb{X}, \mathbb{Y}$ is finite-dimensional real polyhedral Banach space, $\mathbb{L}(\mathbb{X}, \mathbb{Y})$ is also polyhedral. Moreover, from \cite[Th. 1.3]{RS82} $$Ext(B_{\mathbb{L}(\mathbb{X}, \mathbb{Y})^*})= \{ y^* \otimes x: x \in Ext(B_{\mathbb{X}}), y^* \in Ext(B_{\mathbb{Y}^*})\}.$$
			Using \cite[Th. 2.20]{SSGP3}, $\mathbb{Z}$ is strongly anti-coproximinal in $\mathbb{L}(\mathbb{X}, \mathbb{Y})$ if and only if $\mathcal{J}_{\mathbb{Z}}= Ext(B_{\mathbb{L}(\mathbb{X}, \mathbb{Y})^*}).$ It is now straightforward to check that $y^* \otimes x \in \mathcal{J}_{\mathbb{Z}}$ if and only if there exists $A \in \mathbb{Z}$ such that $M_A=\{ \pm x\}$ and $J(Ax)= \{y^*\}.$ Hence we obtain the desired result.
		\end{proof}
		
			\section*{Declarations}
		
		\begin{itemize}

			\item Conflict of interest
			
			The authors have no relevant financial or non-financial interests to disclose.
			
			\item Data availability 
			
			The manuscript has no associated data.
			
			\item Author contribution
			
			All authors contributed to the study. All authors read and approved the final version of the manuscript.
			
		\end{itemize}
		
		\subsection*{Acknowledgment}
		Shamim Sohel and Souvik Ghosh would like to thank CSIR, Govt. of India, for the financial support in the form of Senior Research Fellowship under the mentorship of Prof. Kallol Paul.

	\end{document}